
\documentclass{gen-j-l}
\usepackage{amssymb}
\usepackage{amsfonts}

\setcounter{MaxMatrixCols}{10}

\newtheorem{theorem}{Theorem}[section]
\newtheorem{lemma}[theorem]{Lemma}
\theoremstyle{definition}
\newtheorem{definition}[theorem]{Definition}
\newtheorem{example}[theorem]{Example}

\theoremstyle{remark}
\newtheorem{remark}[theorem]{Remark}
\numberwithin{equation}{section}
\theoremstyle{plain}
\newtheorem{acknowledgement}{Acknowledgement}

\newtheorem{corollary}{Corollary}

\newtheorem{proposition}{Proposition}

\copyrightinfo{2001}{enter name of copyright holder}
\input{tcilatex}

\begin{document}
\title[Restricted ideal theory]{Domains whose ideals meet a universal
restriction}
\author{Muhammad Zafrullah}
\address{Department of Mathematics, Idaho State University, Pocatello, 83209
ID}
\email{mzafrullah@usa.net}
\urladdr{https://www.lohar.com\\
Phone: 2084782759}
\subjclass[2010]{Primary 13A15, 13G05; Secondary 06F05}
\keywords{Lattice, divisorial ideal, $t$-ideal, $t$-invertibility,
Noetherian, Krull, Dedekind, Mori, H-domain}
\dedicatory{Dedicated to my friends}

\begin{abstract}
Let $S(D)$ represent a set of proper nonzero ideals $I(D)$ (resp., $t$%
-ideals $I_{t}(D)$) of an integral domain $D\neq qf(D)$ and let $P$ be a
valid property of ideals of $D.$ We say $S(D)$ meets $P$ (denoted $%
S(D)\vartriangleleft P)$ if each $s\in S(D)$ is contained in an ideal
satisfying $P$. If $S(D)$ $\vartriangleleft P,$ $\dim (D)$ can't be
controlled. When $R=D[X],$ $I(D)$ $\vartriangleleft P$ does not imply $I(R)$ 
$\vartriangleleft P$ while $I_{t}(D)$ $\vartriangleleft P$ implies $I_{t}(R)$
$\vartriangleleft P$ usually. We say $S(D)$ meets $P$ with a twist $($%
written $S(D)\vartriangleleft ^{t}P)$ if each $s\in S(D)$ is such that, for
some $n\in N,$ $s^{n}$ is contained in an ideal satisfying $P$ and study $%
S(D)\vartriangleleft ^{t}P,$ as its predecessor. A modification of the above
approach is used to give generalizations of Almost Bezout domains.
\end{abstract}

\maketitle

\section{\protect\bigskip Introduction\label{S1}}

The general idea of this paper is the following. Consider a property of
ideals in a (commutative) ring $R$ such as "is finitely generated". We raise
and answer questions such as: A commutative ring $R$ is Noetherian if and
only if every ideal of $R$ is finitely generated, what will be a ring every
ideal of which is contained in some finitely generated ideal? It turns out
that this will happen precisely when every maximal ideal of $R$ is finitely
generated. (The resulting ring may not in general be Noetherian.) Let's call
the above process, "tweaking of a property". We note that while the main
thrust of our paper is on tweaking of various properties of ideals of
various kinds in commutative integral domains, the language adopted is such
that it can be used to include questions such as: What will be the result of
tweaking the property, "every left ideal is principal" to "every left ideal
is contained in a principal left ideal. We give examples of domains that
result from this "tweaking". For some examples we will need to use the star
operations called the $v$-operation and the $t$-operation. So it seems best
to start with a brief introduction to those, even before spelling out what
we plan to do.

Let $D$ be an integral domain with quotient field $K\neq D$, let $\mathcal{F}%
(D)$ be the set of nonzero fractional ideals of $D$ and let $f(D)$ be the
set of nonzero finitely generated fractional ideals of $D.$ For $I\in 
\mathcal{F}(D),$ the set $I^{-1}=\{x\in K|xI\subseteq D\}$ is again a
fractional ideal and thus the relation $v$: $I\mapsto I_{v}$ is a function
on $\mathcal{F}(D).$ This function is called the $v$-operation on $D.$
Similarly the relation $t$: $I\mapsto I_{t}=\cup \{F_{v}|$ $0\neq F$ is a
finitely generated subideal of $I\}$ is a function on $\mathcal{F}(D)$ and
is called the $t$-operation on $D.$ These and the operation $d$: $I\mapsto I$
are examples of the so called star operations. The reader may consult
sections 32 and 34 of \cite{Gil 1972} or the first chapter of \cite{Jess
2019} for these operations. However, for the purposes of this introduction,
we note that $I\in \mathcal{F}(D)$ is a $v$-ideal (resp., $t$-ideal) if $%
I=I_{v}$ (resp. $I=I_{t})$ and if $I$ is finitely generated, $I_{v}=I_{t}.$
The rather peculiar definition of the $t$-operation allows one to use Zorn's
Lemma to prove that each integral domain that is not a field has at least
one integral $t$-ideal maximal among integral $t$-ideals, that this maximal $%
t$-ideal is prime and that every proper, integral $t$-ideal is contained in
at least one maximal $t$-ideal. The set of all maximal $t$-ideals of a
domain $D$ is denoted by $t$-$Max(D).$ It can be shown that $D=\cap _{M\in t%
\text{-}Max(D)}D_{M}.$ While we are at it let's also denote by $I(D)$ the
set of all nonzero proper integral ideals of $D$ and by $I_{t}(D)$ the set
of all proper integral $t$-ideals of $D,$ here proper means not equal to $D.$

Now let $S(D)$ represent $I(D)$ ( or $I_{t}(D)$). Let $P$ be a predicate
that defines a non-empty truth set $\Gamma _{S(D)}(P)$ $\subseteq S(D),$
where $P$ can be: "---is invertible" or "---is divisorial",
\textquotedblleft ---is finitely generated" etc.. We say $S(D),$ for a given
value or both values meets $P$ $($written as $S(D)\vartriangleleft P)$ if $%
\forall s\in S(D)\exists \mathcal{\gamma }\in \Gamma _{S(D)}(P)$ $%
(s\subseteq $ $\mathcal{\gamma })$.

(Alternatively, let $P$ be a valid property of ideals and let $\Gamma
_{S(D)}(P)$ be the set of ideals of $D$ satisfying $P,$ we say $S(D)$ meets $%
P$ (denoted as $S(D)\vartriangleleft P$) if each ideal $s$ in $S(D)$ is
contained in some ideal $\mathcal{\gamma }$ that satisfies $P.$ The use of
the predicate or property is to avoid having to state similar theorems over
and over again. A reader can pick up a property of choice to see if it can
be tweaked.)

From an abstract point of view we are actually dealing with a non-empty
poset $(A,\leq )$ such that every member of $A$ precedes at least one
maximal element of $A$. Suppose further that we designate a non-empty subset 
$\Gamma $ of $A$ by some rule. Then every maximal member of $A$ is in $%
\Gamma $ if and only if every member of $A$ precedes some member of $\Gamma
. $ Thus $S(D)\vartriangleleft P\Leftrightarrow Max(D)$ $($resp., $t$-$%
Max(D))\subseteq $ $\Gamma _{S(D)}(P)$. That is easy enough, but the trouble
starts when we ask questions like: Suppose for example $I(D)\vartriangleleft
P$ and suppose $R$ is an extension of $D$ must $I(R)\vartriangleleft P?$
(Same question for $S(D)=I_{t}(D).)$ On the other hand we get the following
benefit from carrying out this study: Take a property $P~$say $"$ finitely
generated", that characterizes commutative Noetherian rings. Then $%
I(D)\vartriangleleft P$ gives us a ring each of whose maximal ideal is
finitely generated. It turns out that this ring is non-Noetherian unless it
is of dimension one. We shall however restrict our attention to integral
domains and note that $D$ is a Krull domain if and only every $t$-ideal of $%
D $ is $t$-invertible. If $P$ stands for "is $t$-invertible" then, as we
shall see, $I_{t}(D)\vartriangleleft P$ is a domain characterized by the
property that every maximal $t$-ideal of $D$ is $t$-invertible. Now you can
set $P$ as: ".. is invertible" and check for yourself that $%
I(D)\vartriangleleft P$ delivers a domain whose maximal ideals are all
invertible but such a domain is not Dedekind unless it is of dimension one.
In fact for each natural number $n$ we can find an $n$ dimensional domain
with each maximal ideal invertible. This fascinating uncontrollability of
Krull dimension is shared by most of $I(D)\vartriangleleft P$ and $%
I_{t}(D)\vartriangleleft P$ etc..

We show in section \ref{Section S2} that if $X$ is an indeterminate over $L$
a field extension of $K,$ and $R=D+XL[[X]],$ and if $P$ returns $T$ on a
maximal ideal $M$ of $D$ if and only if $P$ returns $T$ on $M+XL[[X]],$ $%
S(D) $ $\vartriangleleft P$ if and only if $S(R)\vartriangleleft P$. Also if 
$P$ is such that $P$ returns $T$ on principal ideals, such as "... is
finitely generated", $R=D+XL[X],$ and if $P$ returns $T$ on a maximal ideal $%
M$ of $D$ if and only if $P$ returns $T$ on $M+XL[X],$ $S(D)$ $%
\vartriangleleft P$ if and only if $S(R)\vartriangleleft P.$ Since $\dim
R=\dim D+1,$ \cite[Corollary 1.4]{CMZ 1986}, this shows that if $S(D)$ $%
\vartriangleleft P$ and $P$ returns the truth value $T$ for each principal
ideal, then one can expect no restriction on the Krull dimension of $D$. We
also explain the use of predicate as a way of stating several minor theorems
in one go. Next we show, in section \ref{Section S2}, that if $R=D[X]$ and $%
I(D)$ $\vartriangleleft P,$ then $I(R)$ $\ntriangleleft P$ in cases that we
have considered, yet if $I_{t}(D)$ $\vartriangleleft P$, then $I_{t}(R)$ $%
\vartriangleleft P$ almost always. We give examples to show that generally $%
S(D)\vartriangleleft P$ does not extend to rings of fractions. We study
restrictions, such as requiring the domain to be completely integrally
closed or to be Noetherian etc., that control the dimension of $D$ when $%
S(D)\vartriangleleft P,$ in some cases. In section \ref{Section S3} we study 
$S(D)\vartriangleleft P$ with a twist $($written as $S(D)\vartriangleleft
^{t}P)$ if $\forall s\in S(D)$ $\exists \mathcal{\gamma }\in $ $\Gamma
_{S(D)}(P)(s^{n}$ $\subseteq \mathcal{\gamma }$ for some $n\in N)$ and study 
$S(D)\vartriangleleft ^{t}P$ along the same lines as $S(D)\vartriangleleft P$%
, providing necessary examples. (Here $N$ denotes the set of natural
numbers.) The examples to explain the general idea are mostly well known yet
cover a lot of ground. So the article could be mistaken for a survey. To
allay that notion we have included in section \ref{Section S4} a
generalization of the notion of almost Bezout domains, using a modification
of the principle explained above. Then we study the finite character of this
generalization. However, in view of the ground covered, if someone wishes to
recommend this article as as a survey article they have my permission to
hold this view. Of course our terminology is usually standard, as in \cite%
{Kap 1970} and \cite{Gil 1972}, and we provide adequate introduction to any
term that is new or not quite in common use.)

\section{Effects of a Universal Restriction on $S(D)$ \label{Section S2}}

Even though our main focus will be on the $d$- and $t$-operations, let us
start with an introduction to general star operations so that we can reap
full benefits from our toils. A star operation $\ast $ on $D$ is a function
on $\mathcal{F}(D)$ that satisfies the following properties for every $%
I,J\in \mathcal{F}(D)$ and $0\neq x\in K$:

(i) $(x)^{\ast} = (x)$ and $(xI)^{\ast} = xI^{\ast}$,

(ii) $I \subseteq I^{\ast}$, and $I^{\ast} \subseteq J^{\ast}$ whenever $I
\subseteq J$, and

(iii) $(I^{\ast })^{\ast }=I^{\ast }$.

\vspace{0.1cm} \noindent Now, an ideal $I\in \mathcal{F}(D)$ is a $\ast $%
-ideal if $I^{\ast }=I,$ so a principal ideal is a $\ast $-ideal for every
star operation $\ast .$ Moreover $I\in \mathcal{F}(D)$ is called a $\ast $%
-ideal of finite type if $I=J^{\ast }$ for some $J\in f(D)$. It can be shown
that (a) for every star operation $\ast $ and $I,J\in \mathcal{F}%
(D),~(IJ)^{\ast }=(IJ^{\ast })^{\ast }=(I^{\ast }J^{\ast })^{\ast },$ (the $%
\ast $-multiplication), (b) $(I+J)^{\ast }=(I+J^{\ast })^{\ast }=(I^{\ast
}+J^{\ast })^{\ast }$ (the $\ast $-sum) and (c) $(I^{\ast }\cap J^{\ast
})^{\ast }=I^{\ast }\cap J^{\ast }$($\ast $-intersection).

To each star operation $\ast $ we can associate a star operation $\ast _{s}$
defined by $I^{\ast _{s}}=\bigcup \{\,J^{\ast }\mid J\subseteq I$ and $J\in
f(D)\,\}.$ A star operation $\ast $ is said to be of finite type, or of
finite character, if $I^{\ast }=I^{\ast _{s}}$ for all $I\in \mathcal{F}(D).$
Indeed for each star operation $\ast ,$ $\ast _{s}$ is of finite character.
Thus if $\ast $ is of finite character $I\in \mathcal{F}(D)$ is a $\ast $%
-ideal if and only if for each finitely generated subideal $J$ of $I$ we
have $J^{\ast }\subseteq I.$ Also it is easy to see that $I_{t}=\bigcup
\{\,J_{v}\mid J\subseteq I$ and $J\in f(D)\,\}=I_{v_{s}}$ and so the $t$%
-operation is an example of a star operation of finite character. We will
have occasion to use another star operation called the $w$-operation. It is
defined by $A_{w}=\cap AD_{M}$ where $M\in t$-$Max(D),$ and is finite type.
Star operations of finite character, especially the $t$-operation, will
figure prominently in our discussions. A fractional ideal $I$ is called $%
\ast $-invertible if $(II^{-1})^{\ast }=D.$ It is well known that if $I$ is $%
\ast $-invertible for a finite character star operation $\ast $ then $%
I^{\ast }$ and $I^{-1}$ are of finite type and that every $\ast $-invertible 
$\ast $-ideal is divisorial \cite{Zaf 2000}. If $\ast $ is a star operation
of finite character then just like the $t$-operation, every nonzero proper
integral $\ast $-ideal is contained in a maximal integral $\ast $-ideal that
is prime and just like the $t$-ideals $D=\cap D_{M}$ where $M$ varies over
the maximal $\ast $-ideals of $D.$ We shall be mostly concerned with the two
values of $S(D)$ but will use occasionally $I_{\ast }(D)$ the set of proper,
integral, $\ast $-ideals when we want to go general and not lose sight of
the two values of $S(D).$ (Since $I_{\ast }(D)=I(D)$ $($resp., $I_{t}(D))$
for $\ast =d$ (resp., $\ast =t$)). Let's note that while $I_{\ast }(D)\cup
\{D\}$ is a monoid under the usual $\ast $-multiplication of $\ast $-ideals
with multiplicative identity $D,$ it is a poset under inclusion. From the
poset angle, especially with an eye for p.o group connection $(I_{\ast
}(D)\cup \{D\},+^{\ast },\times ^{\ast }$ $\leq ),$ with $A\leq B$ $%
\Leftrightarrow A\supseteq B,$ is a p.o. monoid and a lattice where $%
A+^{\ast }B=(A,B)^{\ast }=\inf (A,B)=A\wedge B$ and $\sup (A,B)=A\cap B.$ If
we do not have the p.o. group connection in mind, we can consider $(I_{\ast
}(D)\cup \{D\},+^{\ast },\times ^{\ast }$ $\leq ),$ with $A\leq B$ $%
\Leftrightarrow A\subseteq B,$ as a p.o. monoid and a lattice where $%
A+^{\ast }B=(A,B)^{\ast }=\sup (A,B)=A\vee B$ and $\inf (A,B)=A\cap B.$ Both
approaches lead to the same conclusions, though the language undergoes some
changes.  The idea of using a universal restriction via a predicate
germinated in \cite{DZ 2010} where we studied the set $I_{\ast }^{f}(D)$ of
proper $\ast $-ideals of finite type with a preassigned non-empty subset $%
\Gamma $ of $I_{\ast }^{f}(D),$ requiring that every pair of members with $%
A+^{\ast }B\in $ $I_{\ast }^{f}(D),$ $A,B$ be contained in some member of $%
\Gamma .$ (This is equivalent to saying that every proper ideal in $I_{\ast
}^{f}(D)$ is contained in a member of $\Gamma ,$ hence the current
approach.) As these studies appeal mostly to partial order, they stand to
have applications in other areas, as well.

We start with a simple example to set the scene. Let's consider, for a star
operation $\ast $ of finite character, $I_{\ast }(D)$ and define $\Gamma
_{I_{\ast }(D)}(P)$ with $P=$ "---is principal" and suppose that $I_{\ast
}(D)\vartriangleleft P.$ Then every maximal $\ast $-ideal of $D$ is
principal, as we have already observed. But the story doesn't end here. The
event of $I_{\ast }(D)\vartriangleleft P$ imparts some properties to $D,$
such as: the only atoms (irreducible elements) in $D$ are primes and hence
generators of maximal $\ast $-ideals. For this note that if $a$ is an atom
then $a$ must belong to a maximal $\ast $-ideal, which is principal and
hence generated by a prime $p$. But then $a$ is an associate of $p$ and
hence a prime. Thus an irreducible element is a prime in $D,$ if $I_{\ast
}(D)\vartriangleleft P$ for any star operation $\ast $ of finite character.
Now for $\ast =d$ the identity operation $I(D)\vartriangleleft P$ gives a
domain $D$ in which every proper nonzero ideal is contained in a principal
ideal, something stronger than what Cohn \cite{Coh 1968} called a pre-Bezout
domain (every pair of coprime elements is co-maximal). In fact $%
I(D)\vartriangleleft P$ gives a domain something that is even stronger than
what was called a special pre-Bezout, or spre-Bezout domain in \cite{DZ 2010}%
. (Recall that $D$ is a spre-Bezout domain if every finite co-prime set of
elements is co-maximal.) Similarly if $I_{t}(D)\vartriangleleft P$, then $D$
is something stronger than a PSP-domain (every primitive polynomial over $D$
is super-primitive), also discussed in \cite{DZ 2010}. Recall that a
polynomial $f$ is super primitive if $(A_{f})_{v}=D$, where $A_{f}$ is the
content, the ideal generated by the coefficients, of $f.$ Now it is easy to
see that if such a domain is atomic, it is at least a UFD (when $%
I_{t}(D)\vartriangleleft P)$ and a PID (when $I(D)\vartriangleleft P).$ Now,
can we find domains that satisfy these properties and yet are not atomic?
Yes indeed!

\begin{example}
\label{Example TX0} (1) Let $Z,Q$ denote the ring of integers and its
quotient field respectively and let $X$ be an indeterminate over $Q,$ then
the ring $D=Z+XQ[X]$ is such that $I(D)\vartriangleleft P$, where $P=$
"---is principal". (2) Let $Z,L$ denote the ring of integers and a field
containing its quotient field respectively and let $X$ be an indeterminate
over $L,$ then the ring $R=Z+XL[[X]]$ is such that $I(R)\vartriangleleft P$,
where $P=$ "---is principal".
\end{example}

Illustration: According to \cite[Theorem 4.21]{CMZ 1978} the nonzero prime
ideals of $D$ are of the form $pZ+XQ[X],XQ[X]$ and maximal height one
principal primes of the form $f(X)D$ where $f(X)$ is irreducible in $Q[X]$
and $f(0)=1.$ Now $XQ[X]$ is not maximal and the rest of them are. So all
the maximal ideals are principal and so $I(D)\vartriangleleft P$ with $P$
given above. That $D$ is not atomic can be concluded from the fact that $X$
cannot be expressed as a product of atoms. For (2) it is easy to check that
every maximal ideal of $R$ is principal, as the maximal ideals of $Z+XL[[X]]$
are of the form $pZ+XL[[X]]$ where $p$ is a prime element of $Z.$

Now according to \cite{CMZ 1978}, $\dim D=2$ and we said that if $%
I(D)\vartriangleleft P,$ then there maybe no restriction on $\dim D.$ The
answer to this question is provided in a more general form below.

Let's first collect some simple results, observations and notation. We say
that $P$ returns $T$ on an ideal of $I(D)$ if the truth value of $P$ for
that ideal is $T.$ For the sake of easy reference, let's start with an
observation that we have already made.

\begin{lemma}
\textbf{\ }\label{Lemma TX1}Let $(A,\leq )$ be a non-empty poset such that
every element of $A$ precedes some maximal element of $A$ and suppose that
we can designate a non-empty subset $\Gamma $ of $A$ by some rule. Also let $%
Max(A)$ denote the set of all maximal elements of $A.$ Then every member of $%
A$ precedes some member of $\Gamma $ if and only if $Max(A)\subseteq \Gamma
. $ Thus $I(D)\vartriangleleft P$ if and only if $P$ returns $T$ for each
member of $Max(D)$ and $I_{t}(D)\vartriangleleft P$ if and only if $P$
returns $T$ for each member of $t$-$Max(D).$
\end{lemma}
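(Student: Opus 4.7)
The plan is to prove the abstract poset statement first and then specialize to the two ideal-theoretic settings, since both applications are immediate instances once the underlying hypothesis about maximal elements is checked.

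For the poset equivalence I would argue both directions directly. For the easy direction, assume $Max(A)\subseteq \Gamma$; given any $a\in A$, the standing hypothesis furnishes some maximal $m\in Max(A)$ with $a\leq m$, and then $m\in \Gamma$ witnesses that $a$ precedes a member of $\Gamma$. For the converse, assume every element of $A$ precedes some member of $\Gamma$ and pick an arbitrary $m\in Max(A)$. By assumption there is $\gamma \in \Gamma$ with $m\leq \gamma$; maximality of $m$ forces $m=\gamma$, hence $m\in \Gamma$. This gives $Max(A)\subseteq \Gamma$.

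To derive the two concrete assertions, I would apply the abstract statement with $A=I(D)$ (respectively $A=I_{t}(D)$), ordered by inclusion, and with $\Gamma = \Gamma_{S(D)}(P)$. The key is that the hypothesis of the abstract lemma really holds: every proper nonzero ideal of $D$ is contained in a maximal ideal (Zorn's lemma on proper ideals), and similarly every proper integral $t$-ideal is contained in a maximal $t$-ideal, a fact already recorded in the introduction (via the finite-character property of the $t$-operation). Under inclusion, $Max(A)$ in each case is exactly $Max(D)$ or $t\text{-}Max(D)$. Translating ``precedes some member of $\Gamma$'' back to ``is contained in some ideal satisfying $P$'' gives precisely $S(D)\vartriangleleft P$, so the equivalence reads as stated.

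The argument is essentially formal and there is no substantial obstacle; the only subtlety worth flagging is the verification that the poset hypothesis applies in both ideal-theoretic cases, in particular for $t$-ideals, which is why the earlier remark on the existence of maximal $t$-ideals is indispensable. No additional lemma beyond what has already been recalled in the introduction is needed.
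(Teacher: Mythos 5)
Your argument is correct and matches the reasoning the paper itself gives (the lemma is stated as an observation whose proof is exactly the two-line poset argument sketched in the introduction): the forward direction uses that a maximal element preceding some $\gamma\in\Gamma$ must equal it, and the reverse direction uses the standing hypothesis that every element precedes a maximal one. Your additional care in checking that the poset hypothesis holds for $I(D)$ and $I_{t}(D)$ is exactly the right specialization and requires nothing beyond what the paper has already recalled.
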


This, somewhat simple observation may, in some instances, have some
interesting consequences.

\begin{lemma}
\label{Lemma TX2}(1) If a maximal ideal $M$ of $D$ is a $t$-invertible $t$%
-ideal, then $M$ is invertible. (2) If $P_{1}=$ "---is $t$-invertible" and $%
P_{2}=$ "--- is invertible", then $I(D)\vartriangleleft P_{1}\Leftrightarrow
I(D)\vartriangleleft P_{2}$ and (3) $I(D)\vartriangleleft P$ $\Rightarrow
I_{t}(D)\vartriangleleft P$ for any predicate $P$ whose truth set consists
of $t$-ideals.
\end{lemma}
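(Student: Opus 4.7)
\emph{Part (1).} The plan is to exploit both that $M$ is maximal and that $M=M_{t}$. Because $M\subseteq D$ one has $1\in M^{-1}$, so $M=M\cdot 1\subseteq MM^{-1}\subseteq D$, the last inclusion following from the definition of $M^{-1}$. Maximality then forces $MM^{-1}\in\{M,D\}$. The option $MM^{-1}=M$ is ruled out by applying the $t$-operation: it would give $D=(MM^{-1})_{t}=M_{t}=M$, contradicting properness of $M$. Hence $MM^{-1}=D$ and $M$ is invertible.

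\emph{Part (2).} The strategy is to translate both conditions, via Lemma \ref{Lemma TX1}, into assertions about the maximal ideals of $D$: $I(D)\vartriangleleft P_{1}$ is equivalent to every maximal ideal being $t$-invertible, and $I(D)\vartriangleleft P_{2}$ is equivalent to every maximal ideal being invertible. The direction $P_{2}\Rightarrow P_{1}$ is immediate, since for an invertible ideal $M$ one has $MM^{-1}=D$, whence $(MM^{-1})_{t}=D$ (and invertible ideals are automatically divisorial, hence $t$-ideals). For the converse, the plan is to invoke part (1) on each maximal ideal: once $M$ is recognised as a $t$-ideal, invertibility follows at once. I expect the delicate point to be the verification that a $t$-invertible maximal ideal really is a $t$-ideal in this context: since $M$ is maximal, $M_{t}\in\{M,D\}$, and the case $M_{t}=D$ has to be excluded by unpacking what membership of $M$ in the truth set $\Gamma _{I(D)}(P_{1})$ is intended to capture (namely as a proper, non-trivially $t$-invertible witness).

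\emph{Part (3).} The plan here is simply to unwind the definitions. Given $J\in I_{t}(D)$, $J$ is in particular a proper nonzero integral ideal, so $J\in I(D)$; the hypothesis $I(D)\vartriangleleft P$ yields some $\gamma$ in the truth set of $P$ with $J\subseteq \gamma $. By the assumption on $P$'s truth set, $\gamma $ is a $t$-ideal, and since $\gamma $ is proper and nonzero, $\gamma \in I_{t}(D)$. Thus $\gamma $ serves as a witness for $J$ in the truth set of $P$ viewed inside $I_{t}(D)$, giving $I_{t}(D)\vartriangleleft P$. No real obstacle here — this part is a pure bookkeeping step and does the work of showing that restricting the ambient poset to $t$-ideals does not destroy the $\vartriangleleft P$ property when $P$ already lives on $t$-ideals.
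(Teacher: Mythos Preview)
Your proofs of (1), (2), and (3) are correct and follow the same route as the paper's; for (3) the paper is terser, invoking Lemma~\ref{Lemma TX1} on maximal $t$-ideals rather than verifying the containment directly, but the content is identical.

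The point you flag in (2) is real, and the paper does not resolve it either: its proof simply asserts ``every maximal ideal is a $t$-invertible $t$-ideal'' and proceeds to (1). With $P_{1}$ read literally as ``--- is $t$-invertible'', the implication in fact fails --- in a two-dimensional regular local ring the maximal ideal $M$ satisfies $M_{t}=M_{v}=D$, hence $(MM^{-1})_{t}=M_{t}=D$ so $M$ is $t$-invertible, while $MM^{-1}=M\neq D$ so $M$ is not invertible. The intended reading, consistent with the paper's usage elsewhere (e.g.\ Proposition~\ref{Proposition UX0}), is that $P_{1}$ means ``--- is a $t$-invertible $t$-ideal''; with that gloss your appeal to (1) is immediate.
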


\begin{proof}
(1) (This is well known, but we include the proof for completeness.) Suppose 
$M$ is a $t$-invertible $t$-ideal then $(MM^{-1})_{t}=D.$ If $MM^{-1}\neq D$
then $MM^{-1}$ must be contained in a maximal ideal $N.$ But since $%
M\subseteq MM^{-1},N=M.$ So $MM^{-1}\subseteq M.$ But as $M$ is also a $t$%
-ideal, $D=(MM^{-1})_{t}\subseteq M,$ a contradiction.

(2) By Lemma \ref{Lemma TX1}, $I(D)\vartriangleleft P_{i}$ $\Leftrightarrow
P_{i}$ returns $T$ for each maximal ideal $M$ and for each $i=1,2.$ So $%
I(D)\vartriangleleft P_{1}\Rightarrow $ every maximal ideal is a $t$%
-invertible $t$-ideal and by (1) every maximal ideal is invertible. So $%
I(D)\vartriangleleft P_{1}\Rightarrow I(D)\vartriangleleft P_{2}.$ The
converse is obvious because every invertible ideal is a $t$-invertible $t$%
-ideal.

(3) Suppose that $I(D)\vartriangleleft P$ then, in particular, for every
maximal $t$-ideal $M,$ $P$ returns $T$.
\end{proof}

\begin{proposition}
\label{Proposition UX0}(1) Let, on $I(D),$ $P=$ "--- is a principal ideal
(resp., $t$-invertible $t$-ideal, $t$-ideal of finite type, $t$-ideal,
finitely generated ideal,divisorial ideal). Then $I(D)\vartriangleleft P$ if
and only if every maximal ideal of $D$ is a principal ideal (resp.,
invertible ideal, $t$-ideal of finite type, $t$-ideal, finitely generated
ideal, divisorial ideal) of $D.$ (2) Let, on $I(D),$ $P=$ "--- is a
principal ideal (resp., invertible ideal, $t$-invertible $t$-ideal, $t$%
-ideal of finite type, finitely generated ideal, divisorial ideal). Then $%
I_{t}(D)\vartriangleleft P\Leftrightarrow $ every maximal $t$-ideal is a
principal ideal (resp., invertible ideal, $t$-invertible $t$-ideal, $t$%
-ideal of finite type, finitely generated ideal, divisorial ideal).
\end{proposition}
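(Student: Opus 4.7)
The plan is to derive both equivalences as direct applications of Lemma \ref{Lemma TX1} to the posets $(I(D),\subseteq)$ and $(I_t(D),\subseteq)$, whose maximal elements are, respectively, the elements of $Max(D)$ and of $t$-$Max(D)$. I would first verify that both posets meet the hypothesis of Lemma \ref{Lemma TX1}: every proper nonzero ideal of $D$ lies in a maximal ideal by Zorn's lemma, and every proper $t$-ideal lies in a maximal $t$-ideal by the finite-character property of the $t$-operation recalled in the introduction. I would then note that for every predicate $P$ in the list the truth set $\Gamma_{S(D)}(P)$ is non-empty, because any proper principal ideal of $D$ (which exists since $D\neq qf(D)$) is simultaneously principal, invertible, $t$-invertible, divisorial, a $t$-ideal, and a $t$-ideal of finite type.

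For part (1) I would invoke Lemma \ref{Lemma TX1} to rewrite $I(D)\vartriangleleft P$ as ``$P$ returns $T$ on every maximal ideal of $D$''. For $P$ one of ``is principal'', ``is a $t$-ideal'', ``is a $t$-ideal of finite type'', ``is finitely generated'', or ``is divisorial'', this matches the right-hand side of the proposition verbatim, so nothing further is needed. The one case calling for extra work is $P=$ ``is a $t$-invertible $t$-ideal'', where the right-hand side reads ``is invertible'' instead; here I would close the gap with Lemma \ref{Lemma TX2}(1), which states that a maximal ideal is a $t$-invertible $t$-ideal precisely when it is invertible.

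For part (2) I would apply the same strategy to $(I_t(D),\subseteq)$, obtaining $I_t(D)\vartriangleleft P$ iff $P$ returns $T$ on every $M\in t$-$Max(D)$; for each of the six listed predicates the condition on $M$ reads word for word as the right-hand side of the proposition. No analogue of Lemma \ref{Lemma TX2}(1) is needed here, because $M$ is only assumed to be a maximal $t$-ideal and the proposition is correctly stated in $t$-invertible form (a maximal $t$-ideal can easily be a $t$-invertible $t$-ideal without being invertible). The only bookkeeping point to keep track of is that the witness ideals lie in $S(D)$, i.e., are proper and, in part (2), $t$-ideals; since the witnesses in both directions are themselves maximal ideals or maximal $t$-ideals, this is automatic. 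The single conceptual step in the whole proof, therefore, is the appeal to Lemma \ref{Lemma TX2}(1) in part (1); everything else is a direct reading of Lemma \ref{Lemma TX1}.
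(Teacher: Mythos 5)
Your proposal is correct and follows exactly the route the paper intends: the paper's own ``proof'' simply declares that in the presence of Lemma \ref{Lemma TX1} and Lemma \ref{Lemma TX2} the arguments need not be repeated, and what you have written is precisely the spelled-out version of that, including the one genuinely non-trivial point --- using Lemma \ref{Lemma TX2}(1) (plus the trivial converse that invertible implies $t$-invertible $t$-ideal) to reconcile the ``$t$-invertible $t$-ideal'' predicate with the ``invertible'' conclusion in part (1).
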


\begin{proof}
In the presence of Lemma \ref{Lemma TX1} and Lemma \ref{Lemma TX2}, it
appears totally unnecessary to repeat the arguments required for the proofs
of (1) and (2).
\end{proof}

Note that in case of (1) every maximal ideal being a $t$-ideal of finite
type ensures that every maximal $t$-ideal of $D$ is actually a maximal
ideal. Indeed if we suppose that $\wp $ is a maximal $t$-ideal that is not
maximal, then $\wp $ is contained in a maximal ideal, say $M,$ but $M$ is
already a $t$-ideal.

We have restricted our attention to the star operations that are easily
defined for usual extensions. One of the usual extensions is the $D+XL[X]$
construction, where $L$ is an extension of $K$ and $X$ is an indeterminate
over $L.$ It is a special case of the $D+M$ construction of \cite{BR 1976}.
To be able to fully appreciate how it works, one needs to learn a little
about the construction $D+XL[X].$ Let $D,L,X$ be as above$.$ Then $R=D+XL[X]$
$=\{f\in L[X]|f(0)\in D\}$ is an integral domain. Indeed $R$ has two kinds
of nonzero prime ideals $P$ , ones that intersect $D$ trivially and ones
that don't. If $P\cap D\neq (0)$ then $P=P\cap D+XL[X]$ \cite[Lemma 1.1]{CMZ
1986} and obviously $P$ is maximal if and only if $P\cap D$ is. It can be
shown, as was indicated prior to the proof of Corollary 16 in \cite{ACZ 2015}%
, that if $P=P\cap D+XL[X]$, then $P$ is a maximal $t$-ideal of $R$ if and
only if $P\cap D$ is a maximal $t$-ideal of $D$ and indeed as $P_{v}=(P\cap
D)_{v}+XL[X],$ $P$ is divisorial if and only if $(P\cap D)$ is. Moreover,
prime ideals of $R$ that are not comparable with $XL[X],$ i.e. ones that
intersect $D$ trivially, are of the form $(1+Xg(X))R$ where $1+Xg(X)$ is an
irreducible element of $L[X],$ \cite[Lemmas 1.2, 1.5]{CMZ 1986}. (This can
also be seen as follows: If $P$ is a prime that intersects $D$ trivially,
then $P$ extends to a prime $\wp $ of $K+XL[X]$ that is incomparable with $%
XL[X].$ Now $K+XL[X]$ is one dimensional and every element of $K+XL[X]$ is
of the form $lX^{r}(1+Xg(X))$ where. $l\in L,$ $r\geq 0$ and $1+Xg(X)$ is
obviously a product of primes from $L[X]$. Next $lX^{r}(1+Xg(X))\in \wp $
forces $(1+Xg(X))\in \wp ,$ because $X\notin \wp .$ But then $\wp $ is
principal generated by a prime of the form $1+h(X)$ and this also is a prime
in $R,$ thus $1+Xh(X)\in \wp \cap R=P.$ Now as $P$ contains a principal
prime $(1+Xh(X))R$ that extends to a maximal height one prime in $K+XL[X],$ $%
P=(1+Xh(X))R.)$ Also as $XL[X]$ is of height one $XL[X]$ is a $t$-ideal and $%
\dim R=\dim D+1,$ by \cite[Corollary 1.4]{CMZ 1986}. Let us say that a
predicate $P$ respects principals if $P$ returns $T$ on principal ideals
(i.e. principal ideals satisfy $P)$ .

\begin{theorem}
\label{Theorem UX0A}A. Let $P$ be a predicate that respects principals$,$ $L$
an extension field of $K,$ $X$ an indeterminate over $L$ and let $R=D+XL[X]$%
. Then (i) given that $P$ returns $T$ on a maximal ideal $M$ of $D$ if and
only if $P$ returns $T$ on $M+XL[X],$ $I(D)\vartriangleleft P$ $%
\Leftrightarrow I(R)\vartriangleleft P$ (ii) given that $P$ returns $T$ on a
maximal $t$-ideal $M$ of $D$ if and only if $P$ returns $T$ on $M+XL[X],$ $%
I_{t}(D)\vartriangleleft P\Leftrightarrow I_{t}(R)\vartriangleleft P.$ B.
Let $P$ be a predicate$,$ $L$ an extension field of $K,$ $X$ an
indeterminate over $L$ and let $R=D+XL[[X]]$. Then (iii) given that $P$
returns $T$ on a maximal ideal $M$ of $D$ if and only if $P$ returns $T$ on $%
M+XL[[X]],$ $I(D)\vartriangleleft P$ $\Leftrightarrow I(R)\vartriangleleft P$
and (iv) given that $P$ returns $T$ on a maximal $t$-ideal $M$ of $D$ if and
only if $P$ returns $T$ on $M+XL[[X]],$ $I_{t}(D)\vartriangleleft
P\Leftrightarrow I_{t}(R)\vartriangleleft P.$
\end{theorem}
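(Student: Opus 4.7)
The plan is to reduce each of (i)--(iv) to an application of Lemma~\ref{Lemma TX1}, which converts the condition $S(D)\vartriangleleft P$ into a pointwise condition on the set of maximal (or maximal $t$-) ideals of $D$, and similarly for $R$. Once each side is reformulated this way, the hypothesized biconditional on the pair $(M,\, M+XL[X])$ (or $(M,\, M+XL[[X]])$) transfers the property $P$ back and forth between the two sides, provided we have matched up all the maximal (resp., maximal $t$-) ideals of $R$ with those of $D$.

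The key technical step is therefore to classify maximal ideals and maximal $t$-ideals on the $R$ side. For $R=D+XL[X]$, the structural remarks collected just before the theorem show that the prime ideals of $R$ come in two families: the extended primes $M+XL[X]$, which are maximal (resp., maximal $t$-) in $R$ iff $M$ is maximal (resp., maximal $t$-) in $D$; and the principal primes $(1+Xg(X))R$ with $1+Xg(X)$ irreducible in $L[X]$, which are themselves maximal ideals of $R$ (not contained in any $M+XL[X]$, since that would force $1\in M$) and automatically maximal $t$-ideals, being principal. For $R=D+XL[[X]]$ the picture is cleaner, because any $f\in R$ whose constant term is a unit of $D$ is already a unit of $R$; consequently every prime of $R$ with trivial intersection with $D$ lies inside $XL[[X]]$, and $XL[[X]]$ is itself properly contained in $M+XL[[X]]$ for any maximal (resp., maximal $t$-) ideal $M$ of $D$ (which exists because $D$ is not a field). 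The upshot is that the maximal (resp., maximal $t$-) ideals of $D+XL[[X]]$ are exactly the $M+XL[[X]]$ for $M$ maximal (resp., maximal $t$-) in $D$.

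With this classification, each biconditional drops out. For Part~B, every maximal (resp., maximal $t$-) ideal of $R$ has the form $M+XL[[X]]$, so the hypothesized equivalence on such pairs, combined with Lemma~\ref{Lemma TX1}, gives (iii) and (iv) at once in both directions. For Part~A, the extended maximals are treated the same way, while the additional principal maximals $(1+Xg(X))R$ arising in $R=D+XL[X]$ are disposed of using the standing hypothesis that $P$ respects principals, so $P$ is automatically true on them; note that the reverse direction $I(R)\vartriangleleft P\Rightarrow I(D)\vartriangleleft P$ (and its $t$-analogue) needs only the biconditional, since every maximal (resp., maximal $t$-) ideal of $D$ gives rise to such an ideal of $R$ via $M\mapsto M+XL[X]$. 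The main thing to guard against is the possibility of a ``stray'' maximal $t$-ideal of $R$ that is not one of these two types; ruling this out by the containment argument above for $XL[L]]$ and $XL[[X]]$ is what makes the reduction to Lemma~\ref{Lemma TX1} exhaustive, and hence is the only nontrivial step in the whole proof.
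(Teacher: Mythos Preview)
Your proposal is correct and follows essentially the same route as the paper: both arguments reduce the question to Lemma~\ref{Lemma TX1}, identify the maximal (respectively, maximal $t$-) ideals of $R$ as either extensions $M+XL[X]$ (resp., $M+XL[[X]]$) of the corresponding ideals of $D$ or, in the $D+XL[X]$ case, principal primes $(1+Xg(X))R$, and then invoke the hypothesized biconditional together with the assumption that $P$ respects principals. Your treatment of the power-series case is in fact slightly more detailed than the paper's, which simply asserts the needed bijection between maximal ($t$-)ideals of $D$ and of $D+XL[[X]]$ without the unit argument you supply.
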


\begin{proof}
(Perhaps, before a "formal" proof of (i), an example might help. Take a
predicate $P$, say: \_\_ is finitely generated. Then $P$ respects principals
because a principal ideal is finitely generated. Now given a maximal ideal $%
M $ of $D$ the ideal $M+XL[X]=MR$ \cite[Lemma 1.1 and Theorem 1.3]{CMZ 1986}
is a maximal ideal of $R$ and obviously $M$ is finitely generated if and
only if $MR$ is. Then $I(D)\vartriangleleft P$ implies $I(R)\vartriangleleft
P$ because every maximal ideal of $D$ being finitely generated implies every
maximal ideal of $R$ of the form $M+XL[X]$ being finitely generated and as
all the maximal ideals that intersect $D$ trivially are principal, we
conclude that every maximal ideal of $R$ is finitely generated i.e. $%
I(R)\vartriangleleft P.$ Conversely suppose $I(R)\vartriangleleft P.$ That
is every maximal ideal of $R$ is finitely generated. Then in particular
maximal ideals of $R$ that intersect $D$ non trivially are finitely
generated. But the ideals of $R$ that intersect $D$ non trivially are of the
form $M+XL[X]=MR$ \cite[Lemma 1.1 and Theorem 1.3]{CMZ 1986}. Now each $MR$
being finitely generated implies that each maximal ideal $M$ of $D$ is
finitely generated. But this means $I(R)\vartriangleleft P\Rightarrow
I(D)\vartriangleleft P.)$

(i) Suppose $I(D)\vartriangleleft P,$ then $P$ returns $T$ for every maximal
ideal $M$ of $D$ and hence for every maximal ideal of $R$ of the form $%
M+XL[X],$ by the given. Since $P$ respects principal ideals we conclude that 
$P$ returns $T$ for every maximal ideal of $R.$ (Since every maximal ideal
of $R$ not of the form $M+XL[X]$ is principal.) That is $I(R)%
\vartriangleleft P.$ Conversely suppose that $I(R)\vartriangleleft P.$ Then $%
P$ returns $T$ for all maximal ideals $\mathcal{M}$ of $R,$ in particular
for the ones that intersect $D$ non-trivially. But those are precisely of
the form $\mathcal{M}=\mathbf{m}+XL[X]$ where $\mathbf{m}=\mathcal{M}\cap D$
is maximal and as $P$ returns $T$ for $\mathbf{m}+XL[X]$ if and only if $P$
returns $T$ for $\mathbf{m},$ and as the $\mathbf{m}s$ are precisely the
maximal ideals of $D$ we conclude that $I(D)\vartriangleleft P.$ The proof
of (ii) follows the same lines as those adopted in the proof of (i).
However, just for completeness we include it. Suppose $I_{t}(D)%
\vartriangleleft P$ then $P$ returns $T$ for every maximal $t$-ideal $M$ of $%
D$ and hence for every maximal $t$-ideal of $R$ of the form $M+XL[X].$ Since 
$P$ respects principal ideals we conclude that $P$ returns $T$ for every
maximal $t$-ideal of $R.$ That is $I_{t}(R)\vartriangleleft P.$ Conversely
suppose that $I_{t}(R)\vartriangleleft P.$ Then $P$ returns $T$ for all
maximal $t$-ideals $\mathcal{M}$ of $R,$ in particular for the ones that
intersect $D$ non-trivially. But those are precisely of the form $\mathcal{M}%
=\mathbf{m}+XL[X]$ where $\mathbf{m}=\mathcal{M}\cap D$ is a maximal $t$%
-ideal and as $P$ returns $T$ for $\mathbf{m}+XL[X]$ if and only if $P$
returns $T$ for $\mathbf{m},$ and as the $\mathbf{m}s$ are precisely the
maximal $t$-ideals of $D$ we conclude that $I_{t}(D)\vartriangleleft P.$ For
(iii) and (iv) all one has to note is that $m$ is a maximal ($t$-) ideal of $%
D$ if and only if $m+XL[[X]]$ is and that $M$ is a maximal ($t$-) ideal of $%
D+XL[[X]]$ if and only if $M=m+XL[[X]]$ is where $m$ is a maximal ($t$-)
ideal of $D.$
\end{proof}

The above "theorem" is not much of a theorem, really. But it tells us what
to check for, before making a statement such as $I(D)\vartriangleleft P$ $%
\Leftrightarrow I(R)\vartriangleleft P.$

\begin{corollary}
\label{Corollary UX0B}(i) With $D,L,X,R$ as in (i) Theorem \ref{Theorem UX0A}
and with $P=$ "--- is a principal ideal (resp., invertible ideal, $t$%
-invertible $t$-ideal, $t$-ideal of finite type, $t$-ideal, finitely
generated ideal, divisorial ideal) $I(D)\vartriangleleft P\Leftrightarrow
I(R)\vartriangleleft P$ and (ii) with $D,L,X,R$ as in (ii) of Theorem \ref%
{Theorem UX0A} and with $P=$ "--- is a principal ideal (resp., invertible
ideal, $t$-invertible $t$-ideal, $t$-ideal of finite type, finitely
generated ideal, divisorial ideal) $I_{t}(D)\vartriangleleft
P\Leftrightarrow I_{t}(R)\vartriangleleft P.$ (iii) with $D,L,X,R$ as in
(iii) Theorem \ref{Theorem UX0A} and with $P=$ "--- is a principal ideal
(resp., $t$-invertible $t$-ideal, $t$-ideal of finite type, $t$-ideal, $t$%
-ideal, finitely generated ideal, divisorial ideal) $I(D)\vartriangleleft
P\Leftrightarrow I(R)\vartriangleleft P$ and (iv) with $D,L,X,R$ as in (iv)
of Theorem \ref{Theorem UX0A} and with $P=$ "--- is a principal ideal
(resp., $t$-invertible $t$-ideal, $t$-ideal of finite type, $t$-ideal,
finitely generated ideal, divisorial ideal) $I_{t}(D)\vartriangleleft
P\Leftrightarrow I_{t}(R)\vartriangleleft P.$
\end{corollary}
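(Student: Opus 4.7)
The proof is an immediate unpacking of Theorem \ref{Theorem UX0A} once one verifies, for each of the listed predicates $P$, the two hypotheses of the relevant part of that theorem: (a) $P$ respects principals (needed only for parts (i) and (ii), which fall under part A), and (b) the transfer of $P$ across the bijection $M \mapsto M+XL[X]$ (resp.\ $M \mapsto M+XL[[X]]$) between maximal (resp.\ maximal $t$-) ideals of $D$ and those maximal (resp.\ maximal $t$-) ideals of $R$ that meet $D$ nontrivially.

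Condition (a) is trivial: every nonzero principal ideal of $D$ is simultaneously principal, invertible, $t$-invertible (hence a $t$-invertible $t$-ideal), a $t$-ideal of finite type, a $t$-ideal, finitely generated, and divisorial, so each predicate on the list returns $T$ on principals.

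Condition (b) is supplied almost entirely by facts already gathered prior to Theorem \ref{Theorem UX0A}. The identity $M+XL[X] = MR$ from \cite{CMZ 1986}, together with the cited facts that $M$ is maximal (resp.\ maximal $t$-) in $D$ iff $MR$ is in $R$ and $(MR)_v = M_v + XL[X]$ from \cite{ACZ 2015}, handles ``$t$-ideal'', ``$t$-ideal of finite type'', ``divisorial'', and ``$t$-invertible $t$-ideal'' in both directions. For ``principal'' one argues directly: if $M = aD$ then $aR = aD + XL[X] = MR$ since $a$ is a unit in $L$; conversely, if $MR = fR$, writing $f = f(0) + Xh(X)$ and comparing constant terms in $m = fr$ for $m \in M$ forces $f(0) \neq 0$ and $M = f(0)D$. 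The same constant-term bookkeeping handles ``finitely generated'': generators $a_1,\dots,a_n$ of $M$ generate $MR$ over $R$, and conversely generators $f_1,\dots,f_n$ of $MR$ yield $M = (f_1(0),\dots,f_n(0))D$. The case ``invertible'' in part (i) then follows by combining ``finitely generated'' with ``$t$-invertible $t$-ideal'' through Lemma \ref{Lemma TX2}(1). Parts (iii) and (iv) are actually easier, since every maximal (resp.\ maximal $t$-) ideal of $D + XL[[X]]$ already has the form $m + XL[[X]]$, so no auxiliary class of ``principal maximals'' needs to be addressed and one needs only the transfer statement.

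The main obstacle, to the extent there is one, is bookkeeping: one must rehearse the transfer for each property and keep track of which converses demand extracting constant terms in $R$ versus which follow immediately from the cited structural results. No new ideas beyond Theorem \ref{Theorem UX0A} and the standard $D+XL[X]$ and $D+XL[[X]]$ facts should be required, which is presumably why the authors of the paper feel entitled to a very terse proof.
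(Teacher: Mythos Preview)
Your proposal is correct and follows essentially the same route as the paper: verify that each listed $P$ respects principals, then check the transfer condition $P(M)\Leftrightarrow P(M+XL[X])$ (resp.\ $M+XL[[X]]$) using the structural identities already established before Theorem~\ref{Theorem UX0A}, invoking Lemma~\ref{Lemma TX2}(1) for the invertible/\mbox{$t$-invertible} maximal-ideal case. The only cosmetic difference is that where you extract constant terms to handle ``principal'' and ``finitely generated'', the paper instead appeals directly to the identity $A+XL[X]=A(D+XL[X])=AR$, which dispatches those cases (and ``invertible'') in one stroke.
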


\begin{proof}
(i)Note that in each case $P$ returns $T$ for a principal ideal. Moreover
for $A$ an ideal of $D,$ because $A_{v}+XL[X]=(A+XL[X])_{v}$ and $%
A_{t}+XL[X]=(A+XL[X])_{t}$ and because $A+XL[X]=A(D+XL[X]),$ $A$ being
finitely generated, invertible (or being a $v$-ideal of finite type) results
in $A+XL[X]$ being of that kind and vice versa, we conclude that the
requirements of Theorem \ref{Theorem UX0A} are met. (Indeed as a maximal
ideal being a $t$-invertible $t$-ideal is invertible, we haven't let
anything unverified.) For (ii) note that all the checking is as in (i), even
the $t$-invertible $t$-ideal case falls under $t$-ideals of finite type and $%
t$-ideals of finite type are all $v$-ideals. So nothing more needs be done.
For (iii) and (iv) note that all except one statements for $P$ follow
through.
\end{proof}

\begin{remark}
\label{Remaek UX0C}Note that if $D$ is not a field, as we have assumed from
the start, then, whatever be $D,$ $D+XL[X]$ is not Noetherian. This is
because $D+XL[X]$ affords a strictly ascending chain of ideals such as $%
(X)\subseteq (X/d)\subseteq (X/d^{2})\subseteq $ $...\subseteq (X/d^{n})$
for any nonzero non unit $d$ of $D.$ Now as the maximal ideals of a
Noetherian domain $D$ are finitely generated so are the maximal ideals of $%
D+XL[X]$, by Corollary \ref{Corollary UX0B}. This gives us an example (a) of
a non-Noetherian domain whose maximal ideals are all finitely generated.
That is not all, we can construct chains of such domains, of any length,
starting with a domain whose maximal ideals are all finitely generated. To
make things simple let $L=K.$ Let $R_{0}$ be a domain with the property that
every maximal ideal of $R_{0}$ is finitely generated and let $%
R_{1}=R_{0}+X_{0}qf(R_{0})[X_{0}],$ where $X_{0}$ is an indeterminate over $%
qf(R_{0}),$ $R_{2}=R_{1}+X_{1}qf(R_{1})[X_{1}],$ where $X_{1}$ is an
indeterminate over $qf(R_{1})$ and obviously every maximal ideal of $R_{2}$
is finitely generated because $R_{1}$ has this property. If proceeding in
this manner, we reach $R_{n}=R_{n-1}+X_{n-1}qf(R_{n-1})[X_{n-1}]$, where $%
X_{n-1}$ is an indeterminate over $qf(R_{n-1})$ we can construct the next.
As a result of this recursive procedure we have a chain of domains: $%
R_{0}\subseteq R_{1}\subseteq ...$ $\subseteq R_{n}\subseteq
R_{n+1}\subseteq ...,$ where each of $R_{i}$ gets the property of having all
maximal ideals finitely generated from the previous, for $i>0$. Next recall
that (b) $D$ is a Mori domain if $D$ has ACC on integral divisorial ideals.
Obviously Noetherian domains and less obviously Krull domains are Mori. It
can be shown that $D$ is a Mori domain if and only if for every nonzero
integral ideal $A$ of $D$ there is a finitely generated ideal $F\subseteq A$
such that $A_{v}=F_{v}$ \cite[Lemma 1]{Nishi 1963}. This translates to:
every $t$-ideal is a $t$-ideal of finite type \cite[Corollary 1.2]{AA 1988}.
Thus if $D$ is Mori, then every maximal $t$-ideal of $D$ is of finite type.
To show that the property of having every maximal $t$-ideal of finite type
does not characterize Mori domains one can construct $R=D+XK[X]$ indicating,
via Corollary \ref{Corollary UX0B}, that every maximal $t$-ideal of $R$ is
of finite type but $R$ is not Mori because $R$ affords an ascending chain
like: $(X)\subseteq (X/d)\subseteq (X/d^{2})\subseteq $ $...\subseteq
(X/d^{n})$ for any nonzero non unit $d$ of $D.$ We can actually construct,
as in (a) above, chains of domains satisfying this property. We can, of
course do the above with $D+XL[[X]]$ or with any variation of it, i.e.
Gilmer's $D+M$ construction \cite{BG 1973}.
\end{remark}

There are other uses Corollary \ref{Corollary UX0B} can be put to, but we
shall let the reader discover those, if need arises. We now concentrate on
the next extension $R=D[X]$ where $X$ is the usual indeterminate over $D.$

\begin{proposition}
\label{Proposition UXA} (1) Let $I(D)\vartriangleleft P$ where $P=$ "--- is
a proper nonzero principal ideal (resp., $t$-invertible $t$-ideal, $t$%
-ideal, $t$-ideal of finite type, divisorial ideal), let $X$ be an
indeterminate over $D$ and let $R=D[X].$ Then it never is the case that $%
I(R)\vartriangleleft P$ for $P=$ "--- is a proper nonzero principal ideal
(resp., $t$-invertible $t$-ideal, $t$-ideal, $t$-ideal of finite type,
divisorial ideal ) and (2) Let $I_{t}(D)\vartriangleleft P$ where $P=$ "---
is a $t$-invertible $t$-ideal (resp., $t$-ideal of finite type, divisorial
ideal), let $X$ be an indeterminate over $D$ and let $R=D[X].$ Then $%
I_{t}(R)\vartriangleleft P$ where $P=$ "--- is a $t$-invertible $t$-ideal
(resp., $t$-ideal of finite type, divisorial ideal) and conversely.
\end{proposition}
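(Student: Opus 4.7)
The plan is to tackle the two parts separately but to lean on a common ingredient: the structure of maximal $t$-ideals of $R = D[X]$ already used in the excerpt (cf.\ the discussion preceding Corollary~16 of \cite{ACZ 2015}), namely that every maximal $t$-ideal of $R$ is either an upper to zero (a prime $Q$ with $Q \cap D = 0$), which is automatically $t$-invertible, or of the form $M[X]$ for some maximal $t$-ideal $M$ of $D$.

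For part~(1), I would first apply Lemma~\ref{Lemma TX1} to translate $I(R) \vartriangleleft P$ into the assertion that every maximal ideal of $R$ satisfies $P$, and then defeat this with a single witness valid for all five predicates at once. Since $D \neq qf(D)$, pick any maximal ideal $M$ of $D$ and set $\mathcal{M} = (M,X)R$. The isomorphism $R/\mathcal{M} \cong D/M$ shows $\mathcal{M}$ is maximal in $R$, and the chain $0 \subsetneq XR \subsetneq \mathcal{M}$ gives $\mathrm{ht}\,\mathcal{M} \geq 2$. If $\mathcal{M}$ were principal, Krull's principal ideal theorem would force height $\leq 1$; if $\mathcal{M}$ were a $t$-invertible $t$-ideal, Lemma~\ref{Lemma TX2}(1) would make it invertible and hence locally principal in $R_{\mathcal{M}}$, still of height $\leq 1$. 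The three remaining predicates all imply ``is a $t$-ideal'' (divisorial ideals and $t$-ideals of finite type are $t$-ideals), so it suffices to rule just that one out: if $\mathcal{M}$ were a $t$-ideal, being maximal it would be a maximal $t$-ideal meeting $D$ non-trivially in $M$, and the structure theorem would force $\mathcal{M} = M[X]$; but $X \in \mathcal{M} \setminus M[X]$, contradiction.

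For part~(2), Lemma~\ref{Lemma TX1} again reduces each predicate to a statement about maximal $t$-ideals of $R$. Uppers to zero are automatically $t$-invertible, hence---since $\ast$-invertible $\ast$-ideals are divisorial, as recorded in the excerpt---also divisorial and of finite type, so they impose no condition on either side. The biconditional therefore reduces to translating each of the three predicates between $M$ and $M[X]$ for $M$ a maximal $t$-ideal of $D$. The main tool is the identity $(A[X])_v = A_v[X]$ for $A \in \mathcal{F}(D)$, which follows from $(A[X])^{-1} = A^{-1}[X]$ together with the contraction $A[X] \cap D = A$. Granting this: $M = M_v$ iff $M[X] = (M[X])_v$; $M$ is a $v$-ideal of finite type iff $M[X]$ is; and $t$-invertibility transfers via $(M[X]\,(M[X])^{-1})_t = R \Rightarrow (MM^{-1})_t = D$ by contracting, with the converse by extending.

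The main obstacle is the extension/contraction dictionary between $D$ and $D[X]$ in part~(2); it all hinges on the identity $(A[X])_v = A_v[X]$. Once that and the structure theorem for maximal $t$-ideals of $D[X]$ are taken as given, both parts collapse to routine case analysis over the short lists of predicates.
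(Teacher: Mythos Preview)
Your overall strategy matches the paper's: for (1) exhibit a maximal ideal of $R$ lying properly over some $\wp[X]$ and invoke \cite[Proposition 1.1]{HZ 1989} to see it cannot be a $t$-ideal; for (2) split the maximal $t$-ideals of $R$ into uppers to zero (handled by \cite[Theorem 1.4]{HZ 1989}) and extended ideals $M[X]$ (handled by $(A[X])_v=A_v[X]$, which is \cite[Proposition 4.3]{HH 1980}). The paper argues in exactly this way.

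There is, however, a genuine gap in your treatment of the first two predicates in (1). You invoke Krull's principal ideal theorem to bound the height of a principal (or locally principal) maximal ideal by $1$, but nothing in the hypotheses forces $R=D[X]$ to be Noetherian. The paper's own Example \ref{Example TX0} gives a non-Noetherian $D$ with every maximal ideal principal, and in a rank-two valuation domain the maximal ideal is principal of height $2$, so the height bound simply fails without Noetherianity. Fortunately your own third argument already repairs this: a principal ideal is a $t$-ideal (by axiom (i) of a star operation), and a $t$-invertible $t$-ideal is by definition a $t$-ideal, so all five predicates imply ``is a $t$-ideal'' and your contradiction via $\mathcal{M}=(M,X)\neq M[X]$ disposes of every case uniformly. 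Drop the principal-ideal-theorem detour and the proof is complete and essentially identical to the paper's.
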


\begin{proof}
(1) (This can be considered well known, but is included for completeness.)
Let $I(D)\vartriangleleft P$ where $P=$ "--- is a proper nonzero principal
ideal (resp., $t$-invertible $t$-ideal, $t$-ideal, $t$-ideal of finite type,
divisorial ideal). Then every maximal ideal $\wp $ of $D$ is a $t$-ideal.
Now consider the prime ideal $\wp \lbrack X]$ in $R=D[X]$ and note that $\wp
\lbrack X]$ can never be a maximal ideal because $D[X]/\wp \lbrack X]\cong
(D/\wp )[X]$ is a polynomial ring over a field and so must have an infinite
number of maximal ideals. This forces $\wp \lbrack X]$ to be properly
contained in an infinite number of maximal ideals $M_{\alpha }$ of $D[X].$
Let $M$ be one of them$.$ Then $M=(f,\wp \lbrack X]).$ Now, if it were the
case that $I(R)\vartriangleleft P$ for $P=$ "--- is a proper $t$-ideal",
then every maximal ideal of $R$ would be a $t$-ideal. This would make $M$ a $%
t$-ideal with $M\cap D=\wp \neq (0).$ But then, according to Proposition 1.1
of \cite{HZ 1989}, $M=(M\cap D)[X]=\wp \lbrack X],$ a contradiction to the
fact that $\wp \lbrack X]\subsetneq M$. For (2) note that if $%
I_{t}(D)\vartriangleleft P$ where $P$ is as specified, then every maximal $t$%
-ideal $\wp $ of $D$ is a $t$-invertible $t$-ideal (resp., $t$-ideal of
finite type, divisorial ideal). Now let $M$ be a maximal $t$-ideal of $R.$
If $M\cap D=(0)$, then $M$ is a $t$-invertible $t$-ideal and hence a $t$%
-ideal (and divisorial, being a finite type $t$-ideal), by Theorem 1.4 of 
\cite{HZ 1989}. Next if $M$ is such that $M\cap D\neq (0),$ then $M=(M\cap
D)[X]$ where $M\cap D$ is a maximal $t$-ideal of $D$ and hence a $t$-ideal,
and obviously is divisorial if and only if $M$ is divisorial \cite[%
Proposition 4.3]{HH 1980}. Conversely suppose that $I_{t}(R)\vartriangleleft
P$ for $P$ as specified. Then every maximal $t$-ideal $M$ of $R$ is a $t$%
-invertible $t$-ideal (resp., $t$-ideal of finite type, divisorial ideal).
Now let $\wp $ be a maximal $t$-ideal of $D.$ Then $\wp \lbrack X]$ is a
maximal $t$-ideal of $R$ by Proposition 1.1 of \cite{HZ 1989} and hence
divisorial. But this leads to $\wp \lbrack X]=(\wp \lbrack X])_{v}=\wp
_{v}[X]$ (resp., $(\wp \lbrack X])_{t}=\wp _{t}[X])$ and hence to $\wp =\wp
_{v}.$ (We have chosen to focus on divisorial ideals ($t$-ideals), as all
the other cases are divisorial (or $t$-ideals) and a maximal $t$-ideal of $R$
that intersects $D$ trivially is divisorial of finite type and hence a $t$%
-ideal.) Moreover if a maximal $t$-ideal $M$ of $R$ intersects $D$
non-trivially then $M=(M\cap D)[X]$ as above and of course $M$ is a $t$%
-ideal ( $t$-ideal of finite type, divisorial) if and only if $M\cap D$ is) 
\cite[Proposition 4.3]{HH 1980}.
\end{proof}

I cannot find a way of proving or disproving the following: Let $R=D[X],$
and let $P=$ "--- is a finitely generated ideal" then $I(D)\vartriangleleft
P\nRightarrow I(R)\vartriangleleft P.$

Now we are ready to show that if $R=D_{\mathcal{S}},$ for a multiplicative
set $\mathcal{S}$ of $D$ where $S(D)\vartriangleleft P$ for $P=$ "--- is a
proper nonzero principal ideal (resp., $t$-invertible $t$-ideal, $t$-ideal, $%
t$-ideal of finite type, divisorial ideal), then it may not generally be the
case that $S(R)\vartriangleleft P$. Let's first recall from Lemma \ref{Lemma
TX2} that if a maximal ideal is a $t$-invertible $t$-ideal then it is
actually invertible. Before we start constructing examples, let's take a
look at the tool that we use in the following example. Let $K$ be a proper
subfield of a field $L$, let $X$ be an indeterminate over $L$ and let $%
T=K+XL[X]$. The ring $T$ is an example of an atomic domain that is not a UFD
(see \cite[page 353]{Coh 1989}) and an example of a $D+M$ construction. That 
$T$ is one dimensional follows from \cite[Corollary 1.4]{CMZ 1986}, that
every maximal ideal of $T$ different from $XL[X]$ is principal of height one
follows from Lemmas 1.2 and 1.5 of \cite{CMZ 1986} and that $XL[X]$ is
divisorial can be easily checked, because $XL[X]=(X,lX)_{v}$ where $l\in
L\backslash K.$

\begin{example}
\label{Example UXD}Let $L$ be a field extension of $K$ with $[L:K]=\infty ,$
let $X$ be an indeterminate over $L$ and consider $R=D+XL[X].$ Set $%
S=D\backslash (0).$ If every maximal ideal of $D$ is principal (invertible,
finitely generated) then so is every maximal ideal of $R.$ But that is not
the case for every maximal ideal of $R_{S}.$ For $R_{S}=K+XL[X]$ has a
maximal ideal that is a $t$-ideal but neither principal nor finitely
generated, because $[L:K]=\infty .$ (It is easy to see that every invertible
ideal is principal in $T$, \cite[Example 1.10]{BZ 1988}.)
\end{example}

The following example has been taken, almost verbatim, from \cite[Example 3.3%
]{HZ 2015}. To decipher this example, recall that $D$ is a PVMD (Prufer $v$%
-multiplication domain) if every nonzero finitely generated ideal of $D$ is $%
t$-invertible. A good source for this concept is \cite{MZ 1981}.

\begin{example}
\label{Example UXE}. There does exist at least one example of a domain $D$
such that each maximal ideal of $D$ is a $t$-ideal but for some maximal
ideal $M$ we have $MD_{M}$ not a $t$-ideal. One such example is that of an
essential domain that is not a PVMD. (Recall that an integral domain $D$ is
essential if $D$ has a set $G$ of primes such that $D_{p}$ is a valuation
domain for each $P\in G$ and $D=\cap _{P\in G}D_{P}.)$ Now the example in
question was constructed by Heinzer and Ohm in \cite{HO 1973} and further
analyzed in \cite{MZ 1981} and \cite{GHL 2004}. As it stands, the example
has all except one maximal ideals of height one and hence $t$-ideals and the
other maximal ideal $M$ is a height $2$ prime $t$-ideal. Indeed this is the
maximal ideal $M$ such that $D_{M}$ is a $2$-dimensional regular local ring
and so with a maximal ideal that is not a $t$-ideal. Showing that while $%
I(D)\vartriangleleft P$ for $P=$ "--- is a $t$-ideal of $D$, $%
I(D_{M})\ntriangleleft P$.
\end{example}

For the next example recall from \cite{Zaf 1987} that an integral domain $D$
is a pre-Schreier domain if for all $a,b_{1},b_{2}\in D\backslash \{0\},$ $%
a|b_{1}b_{2}$ implies that $a=a_{1}a_{2},$ with $a_{i}\in D$ such that $%
a_{i}|b_{i}.$ Also call a $D$-module $M$ locally cyclic if for any elements $%
x_{1},x_{2},...,x_{n}\in M$ there is a $d\in M$ such that $x_{i}=r_{i}d$ for
some $r_{i}\in D.$

\begin{example}
\label{Example UXF}For $%
\mathbb{R}
$ the field of real numbers, let $%
\mathbb{R}
+M$ , be a non-discrete rank one valuation domain, as constructed in say
Example 4.5 of \cite{Zaf 1987}. As decided in the above-mentioned example, $%
\,T=%
\mathbb{Q}
+M$ (where $%
\mathbb{Q}
$ is the field of rational numbers) is a pre-Schreier domain with $M$
divisorial and by \cite[Theorem 4.4 ]{Zaf 1987} locally cyclic. But then $M$
cannot be a $v$-ideal of finite type. For if $M=(x_{1},x_{2},...,x_{n})_{v},$
then there would be a $d\in M$ such that $M=(x_{1},x_{2},...,x_{n})_{v}%
\subseteq (d)\subseteq M,$ contradicting the construction in Example 4.5 of 
\cite{Zaf 1987}. Now let $p$ be a prime element in $%
\mathbb{Z}
$, the ring of integers, and consider the local ring $R=%
\mathbb{Z}
_{(p)}+M.$ Indeed the maximal ideal of $R$ is principal and hence can pass
as a $t$-ideal of finite type, a $t$-invertible $t$-ideal. But if $S$ is the
multiplicative set of $R$ generated by $p,$ neither of these properties are
shared by the maximal ($t$-) ideal $M$ of $R_{S}=%
\mathbb{Q}
+M.$
\end{example}

Now the fact that $I(D)\vartriangleleft P$ can go through the $D+XL[X]$
construction with the various descriptions of $P$ can be used to construct,
for example, a domain of any (finite) dimension with $t$-maximal ideals
principal. If that reminds an attentive reader of comments (3) and (4) of
Remarks 8 of \cite{MZ 1990}, then so be it. The point however is that the
events of $I(D)\vartriangleleft P$ and $I_{t}(D)\vartriangleleft P,$ with
suitable descriptions of $P$, do not have the usual Ascending Chain
Condition (ACC) on ideals (principal or $t$-) ideals. One may wonder if
there are any simple restrictions that will get the beast under control. Yet
to prepare to see that, here is another simple set of results that can come
in handy when we are dealing with completely integrally closed integral
domains. Of course before we bring in those results some introduction is in
order. Recall that an integral domain $D$ with quotient field $K$ is
completely integrally closed if whenever $rx^{n}$ $\in D$ for $x\in K$, $%
0\neq $ $r\in D$, and every integer $n\geq 1$, we have $x\in D$. It can be
shown that an intersection of completely integrally closed domains is
completely integrally closed. The go to reference for Krull domains is
Fossum's book \cite{Fos 1973} where you can find that $D$ is a Krull domain
if $D$ is a locally finite intersection of localizations at height one
primes such that $D_{P}$ is a discrete valuation domain at each height one
prime. Thus a Krull domain is completely integrally closed. Glaz and
Vasconcelos \cite{GV 1977} called an integral domain $D$ an H-domain if for
an ideal $A$ with $A^{-1}=D,$ (or equivalently $A_{v}=D)$ then $A$ contains
a finitely generated subideal $F$ such that $A^{-1}=F^{-1}$. They showed
that a completely integrally closed H-domain is a Krull domain. In \cite[%
Proposition 2.4]{HZ 1988} it was shown that $D$ is an H-domain if and only
if every maximal $t$-ideal of $D$ is divisorial. We have in the following a
basic result and some of its derivatives.

\begin{proposition}
\label{Proposition UX1}(a) Let $D$ be a completely integrally closed domain.
Then (1) $D$ is a Krull domain if and only if $I_{t}(D)\vartriangleleft P$
for $P=\,$"--- is a proper divisorial ideal, (2) $D$ is a locally factorial
Krull domain if and only if $I_{t}(D)\vartriangleleft P$ for $P=\,$"--- is a
proper invertible integral ideal of $D,$ (3) $D$ is a Krull domain if and
only if $I_{t}(D)\vartriangleleft P$ for $P=\,$"--- is a proper $t$%
-invertible $t$-ideal of $D,$ (b) (4) Let $D$ be such that $D_{M}$ is a
Krull domain for each maximal ideal $M$ of $D.$ Then $D$ is a Krull domain
if and only if $I_{t}(D)\vartriangleleft P$ for $P=\,$"--- is a proper
divisorial ideal of $D$ \cite{EIT 2019} (5) Let $D$ be an intersection of
rank one valuation domains. Then $D$ is a Krull domain if and only if $%
I_{t}(D)\vartriangleleft P$ for $P=\,$"--- is a proper divisorial ideal of $%
D,$ (6) Let $D$ be an almost Dedekind domain. Then $D$ is a Dedekind domain
if and only if $I(D)\vartriangleleft P$ for $P=\,$"--- is a proper
divisorial ideals of $D.$
\end{proposition}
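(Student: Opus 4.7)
The plan is to convert each hypothesis via Lemma \ref{Lemma TX1}: $I_{t}(D)\vartriangleleft P$ (respectively $I(D)\vartriangleleft P$) means every maximal $t$-ideal (resp.\ every maximal ideal) of $D$ satisfies $P$. Each of the six items then reduces to a classical characterization of the named class of domains.

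For (1), ``every maximal $t$-ideal is divisorial'' is, by \cite[Proposition 2.4]{HZ 1988}, precisely the H-domain condition; Glaz and Vasconcelos \cite{GV 1977} show that a completely integrally closed H-domain is Krull, while the converse is the standard fact that every $t$-ideal of a Krull domain is divisorial. For (3), every $t$-invertible $t$-ideal is divisorial, so its hypothesis is stronger than that of (1) and $D$ must be Krull; the converse follows from the characterization of Krull domains as those in which every $t$-ideal is $t$-invertible. For (2), invertibility is stronger than $t$-invertibility of a $t$-ideal, so (3) forces $D$ to be Krull; once Krull, the maximal $t$-ideals coincide with the height-one primes, and the classical description of locally factorial Krull domains (Krull plus every height-one prime invertible) closes both directions.

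For (4) and (5) I would reduce to (1) by first showing $D$ is completely integrally closed: intersections of completely integrally closed domains are completely integrally closed, and both Krull domains and rank-one valuation domains are completely integrally closed, so $D=\bigcap_{M}D_{M}$ and $D=\bigcap_{\alpha}V_{\alpha}$ respectively deliver complete integral closure. For (6), almost Dedekind makes $D$ completely integrally closed and one-dimensional Pr\"ufer. Given a maximal ideal $M$ of $D$, divisoriality gives $M_{v}=M\neq D$, hence $M^{-1}\supsetneq D$; complete integral closure yields $(M\!:\!M)=D$, so $MM^{-1}$ is an integral ideal properly containing $M$, whence $MM^{-1}=D$ by maximality and $M$ is invertible. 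An almost Dedekind domain in which every maximal ideal is invertible (equivalently, finitely generated) is Dedekind, which finishes the forward direction; the converse is immediate since in a Dedekind domain every nonzero ideal is invertible and hence divisorial.

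The step I expect to be the main obstacle is the final implication in (6), passing from ``every maximal ideal of the almost Dedekind $D$ is invertible'' to ``$D$ is Noetherian, hence Dedekind''. The cleanest workaround is to observe that every maximal ideal of a one-dimensional domain is a $t$-ideal (as a height-one prime is minimal over a principal ideal and therefore a $t$-ideal); then the hypothesis of (6) is already the hypothesis of (1), which gives $D$ Krull, and a one-dimensional Krull domain is Dedekind. A secondary subtlety worth being explicit about is the identification, needed in (2), of maximal $t$-ideals with height-one primes once $D$ has been shown to be Krull, since that identification is exactly what enables the use of the locally factorial Krull characterization.
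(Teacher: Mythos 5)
Your proposal is correct and follows essentially the same route as the paper: reduce via Lemma \ref{Lemma TX1} to the maximal ($t$-)ideals, observe that a maximal $t$-ideal contained in a proper divisorial (resp.\ invertible, $t$-invertible) ideal must equal it, and then combine the H-domain characterization \cite[Proposition 2.4]{HZ 1988} with the Glaz--Vasconcelos theorem (plus Anderson's criterion for locally factorial Krull domains in (2), and complete integral closedness of the intersections in (4)--(5)). Your ``workaround'' for (6) --- every maximal ideal of the one-dimensional domain is a $t$-ideal, so (1) applies and a one-dimensional Krull domain is Dedekind --- is exactly the argument the paper gives, so the detour through $(M\!:\!M)=D$ and Cohen's theorem, while valid, is not needed.
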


\begin{proof}
The idea of proof, in each case, is that every maximal $t$-ideal (maximal
ideal) being contained in a proper divisorial ideal must be equal to it and
combining this with the fact that $D$ is completely integrally closed we get
the Krull domain conclusion. For the locally factorial domain conclusion in
(2) we note that every maximal $t$-ideal of $D$ is invertible and so
divisorial. This gives the Krull conclusion and a Krull domain is locally
factorial if and only if every height one prime of $D$ is invertible \cite[%
Theorem 1]{And 1978}. For the Dedekind domain conclusion in (6), we note
that every maximal ideal is of height one and divisorial, being invertible,
so every maximal ideal is a $t$-ideal and so the domain is Krull and one
dimensional. The converse in each case is obvious, in that if $D$ is a Krull
domain then $D$ is completely integrally closed and every maximal $t$-ideal
of $D$ is, a $t$-invertible $t$-ideal and hence, divisorial. (If $D$ is
locally factorial, as in (2), every maximal $t$-ideal of $D$ is invertible
and hence divisorial.) And if $D$ is Dedekind, then $D$ is completely
integrally closed and every maximal ideal is invertible and hence divisorial.
\end{proof}

It is well known that $D$ is a Krull domain if and only if every $t$-ideal
of $D$ is a $t$-product of prime $t$-ideals of $D$ \cite{Nishi 1973}. As we
have seen, the prime $t$-ideals in a Krull domain happen to be all $t$%
-invertible $t$-ideals, and hence maximal $t$-ideals and divisorial \cite[%
Proposition 1.3]{HZ 1989}. Also, according to \cite[Theorem 1.10]{Zaf 1986}, 
$D$ is a locally factorial Krull domain if, and only if, every $t$-ideal of $%
D$ is invertible. Finally, $D$ being completely integrally closed may not
control the dimension of $D$ when every maximal ideal is a $t$-ideal.
Because the ring of entire functions is an infinite dimensional Bezout
domain and completely integrally closed \cite[page 146]{Gil 1972}. (Also, in
a Bezout domain every maximal ideal is a $t$-ideal.)

Another condition that helps control the dimension is requiring some kind of
an ascending chain condition. Call $D$ a $t$-ACC domain if $D$ satisfies ACC
on its $t$-invertible $t$-ideals.

\begin{lemma}
\label{Lemma UX2}Let $D$ be a $t$-ACC domain and let $I$ be a proper $t$%
-invertible $t$-ideal of $D.$ Then $\cap (I^{n})_{t}=(0).$ Consequently, in
a domain satisfying $t$-ACC, if $A$ is a proper divisorial ideal of $D$ and $%
I$ a $t$-invertible $t$-ideal then $(AI)_{v}=A$ implies $I=D$.
\end{lemma}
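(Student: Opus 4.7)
The plan for the first claim is to argue by contradiction and build an ascending chain of integral $t$-invertible $t$-ideals whose stabilization forces $I=D$. Suppose $0\neq a\in\bigcap_n (I^n)_t$. Using the $t$-multiplication identity $(XY)_t=(X_tY_t)_t$ and the defining relation $(II^{-1})_t=D$, a short induction shows $((I^{-1})^n I^n)_t=D$, so $((I^{-1})^n)_t$ is the $t$-inverse of $(I^n)_t$. Set $L_n=\bigl((a)(I^{-1})^n\bigr)_t$. Each $L_n$ is integral (since $a\in(I^n)_t$ gives $(a)(I^{-1})^n\subseteq (I^n)_t(I^{-1})^n\subseteq D$), $t$-invertible (as a $t$-product of a principal ideal and a $t$-invertible $t$-ideal), and $L_n\subseteq L_{n+1}$ because $I\subseteq D$ forces $D\subseteq I^{-1}$.

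Applying $t$-ACC to $\{L_n\}$ supplies an $n$ with $L_n=L_{n+1}$. $t$-multiplying both sides by $(I^n)_t$ and collapsing the $(I^{-1})^n I^n$ factor to $D$ reduces the equality to $(a)=\bigl((a)I^{-1}\bigr)_t$, so $(a)I^{-1}\subseteq(a)$. Principal cancellation then yields $I^{-1}\subseteq D$; together with $D\subseteq I^{-1}$ this gives $I^{-1}=D$. But then $t$-invertibility of the $t$-ideal $I$ forces $D=(II^{-1})_t=I_t=I$, contradicting that $I$ is proper; hence $\bigcap_n (I^n)_t=(0)$.

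The consequent follows cleanly from the first claim rather than requiring a fresh chain argument. From $(AI)_v=A$ and the identity $(XY)_v=(X_v Y)_v$, an easy induction gives $(AI^n)_v=A$ for every $n\geq 1$. Since $A\subseteq D$ we have $AI^n\subseteq I^n$, hence $A=(AI^n)_v\subseteq(I^n)_v$; because $(I^n)_t$ is $t$-invertible it is divisorial, so $(I^n)_v=(I^n)_t$. Therefore $A\subseteq\bigcap_n (I^n)_t$. If $I$ were proper, the first claim would force $A=(0)$, contradicting that $A$ is a nonzero proper divisorial ideal; hence $I=D$. The main technical point is the identity $((I^{-1})^n I^n)_t=D$, which underpins both the integrality of each $L_n$ and the clean cancellation that isolates the relation $(a)=((a)I^{-1})_t$ after stabilization.
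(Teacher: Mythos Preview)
Your argument is correct and follows essentially the same route as the paper: pick a nonzero element $a$ in the intersection, form the ascending chain $\bigl((a)(I^{-1})^n\bigr)_t$ of integral $t$-invertible $t$-ideals, stabilize via $t$-ACC, and cancel to force $I=D$; the paper merely phrases the chain as $x(I^{-n})_v$ (justifying $v$ in place of $t$ since $I$ and $I^{-1}$ are of finite type) and performs the cancellation in the opposite order. Your treatment of the ``consequently'' clause is the same as the paper's, just with the inductive step $(AI^n)_v=A$ spelled out explicitly.
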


\begin{proof}
Because a $t$-invertible $t$-ideal is a $v$-ideal of finite type with $%
I^{-1} $ of finite type there is no harm in using $v$ for $t.$ Now let $\cap
(I^{n})_{v}\neq 0$ and let $x$ be a nonzero element in $\cap (I^{n})_{v}.$
Then there is a chain of $t$-invertible $t$-ideals $xI^{-1}\subseteq
(xI^{-2})_{v}\subseteq ...\subseteq x(I^{-r})_{v}$ $...$ which must stop
after a finite number of steps, because of the $t$-ACC restriction. Say $%
x(I^{-n})_{v}=x(I^{-n-1})_{v}.$ Cancelling $x$ from both sides we get $%
(I^{-n})_{v}=(I^{-n-1})_{v}.$ Multiplying both sides by $I^{n+1}$ and
applying the $v$-operation we get $I=D,$ a contradiction that arises from
assuming that there is a nonzero element in $\cap (I^{n})_{v}.$ For the
consequently part note that $(AI)_{v}=A$ implies that $A\subseteq
(I^{n})_{v} $ for all positive integers $n$.
\end{proof}

\begin{proposition}
\label{Proposition UX3}Let $D$ be a $t$-ACC domain. Then (1) $D$ is a PID if
and only if $I(D)\vartriangleleft P$ for $P=\,$"--- is a proper nonzero
principal ideal" and (2) $D$ is a Dedekind domain if and only if $%
I(D)\vartriangleleft P$ for $P=\,$"--- is a proper invertible ideal" and (3) 
$D$ is a Krull domain if and only if $I_{t}(D)\vartriangleleft P$ for $P=\,$%
"--- is a proper $t$-invertible $t$-ideal"$.$
\end{proposition}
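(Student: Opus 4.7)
The forward implications are immediate, since every ideal (resp.\ integral $t$-ideal) of a PID (resp.\ Dedekind, Krull) domain is already principal (resp.\ invertible, $t$-invertible) and all three satisfy $t$-ACC. I focus on the converses; the central case is (3), and (1) and (2) will reduce to it.

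For the reverse of (3), assume $D$ is $t$-ACC and every maximal $t$-ideal is $t$-invertible. I first show that every nonzero prime $t$-ideal of $D$ is maximal. Given $P\subsetneq M$ with $P$ a nonzero prime $t$-ideal and $M$ a maximal $t$-ideal, set $B=(M^{-1}P)_{t}$. The $t$-calculation $(MB)_{t}=((MM^{-1})_{t}P)_{t}=P$, together with primality of $P$ and $M\not\subseteq P$, yields $B\subseteq P$, while $B\supseteq P$ is automatic; so $B=P$, and iterating produces $(M^{-n}P)_{t}=P$ for every $n$, which unwinds to $P\subseteq(M^{n})_{t}$. Lemma~\ref{Lemma UX2} then forces $P=(0)$, a contradiction. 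Next I show that every nonzero principal ideal factors as a $t$-product of maximal $t$-ideals: for $0\neq a\in D$, put $B_{0}=aD$ and recursively $B_{i+1}=(B_{i}M_{i+1}^{-1})_{t}$ for a maximal $t$-ideal $M_{i+1}\supseteq B_{i}$. Each $B_{i}$ is $t$-invertible; the chain is strictly ascending (equality $B_{i+1}=B_{i}$ would give $B_{i}\subseteq\bigcap_{n}(M_{i+1}^{n})_{t}=(0)$ by the same argument); and $t$-ACC forces termination at some $B_{n}=D$, so $aD=(M_{1}M_{2}\cdots M_{n})_{t}$. Two further facts follow: each nonzero element of $D$ lies in only finitely many maximal $t$-ideals (primality of each $M_{i}$), and at each maximal $t$-ideal $M$ the localization $D_{M}$ is a DVR (it is local with principal maximal ideal $MD_{M}$, one-dimensional by the first step, and $\bigcap_{n}(MD_{M})^{n}=(0)$ follows from Lemma~\ref{Lemma UX2} via the inclusion $(M^{n})_{w}=M^{n}D_{M}\cap D\subseteq(M^{n})_{t}$). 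Together with the equality $D=\bigcap_{M}D_{M}$ over the maximal $t$-ideals, this presents $D$ as a locally finite intersection of DVRs at its height-one primes, i.e., as a Krull domain.

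For (1) and (2), every maximal ideal being principal (resp.\ invertible) makes it a $t$-invertible $t$-ideal, so the maximal ideals coincide with the maximal $t$-ideals and (3) gives $D$ Krull. The analogue of the prime-$t$-ideal argument, applied with the ordinary $M^{-1}$ of an invertible $M$, shows that any prime $P\subsetneq M$ satisfies $P\subseteq\bigcap_{n}M^{n}=(0)$, so $\dim D\leq 1$; a Krull domain of dimension $\leq 1$ is Dedekind, giving (2); and a Dedekind domain whose maximal ideals are all principal is a PID, giving (1). The main obstacle is organizing the local-to-global step within (3)---converting the principal-ideal factorization (which rests on $t$-ACC together with Lemma~\ref{Lemma UX2}) into the DVR structure at each maximal $t$-ideal and the locally finite character of $D=\bigcap D_{M}$.
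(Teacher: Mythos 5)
Your proof is correct, and its skeleton --- every maximal ($t$-)ideal is ($t$-)invertible, hence of height one via Lemma \ref{Lemma UX2}, hence $D_{M}$ is a DVR and one concludes from $D=\cap_{M}D_{M}$ --- coincides with the paper's up to the final step, where you take a genuinely different route. The paper notes that each $D_{M}$ being a DVR makes $D$ completely integrally closed and that each maximal $t$-ideal is divisorial, and then invokes Proposition \ref{Proposition UX1}, whose proof ultimately rests on the Glaz--Vasconcelos theorem that a completely integrally closed H-domain is Krull. You instead establish finite $t$-character directly, by $t$-factoring each $aD$ into maximal $t$-ideals through the recursion $B_{i+1}=(B_{i}M_{i+1}^{-1})_{t}$ (strict ascent again supplied by Lemma \ref{Lemma UX2}, termination by $t$-ACC), and then appeal to the ``locally finite intersection of DVRs at height-one primes'' characterization of Krull domains. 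Your version is longer but more self-contained and delivers finite $t$-character and the $t$-factorization of principal ideals as byproducts; the paper's is shorter but leans on outside machinery. Two small points. First, your height-one argument is run for prime $t$-ideals $P\subsetneq M$ (you use $P=P_{t}$ when computing $(MB)_{t}$), so to conclude $\dim D_{M}\leq 1$ you should either add that every nonzero prime contains a nonzero prime $t$-ideal (a minimal prime of a principal ideal), or argue elementwise that $x\in Q\subsetneq M$ forces $xM^{-n}\subseteq Q$ and hence $x\in\cap(M^{n})_{v}=(0)$; the paper is equally terse here. Second, the clause $\cap_{n}(MD_{M})^{n}=(0)$ is redundant: a one-dimensional local domain with principal maximal ideal is already a DVR, which is all the paper uses (though your identity $(M^{n})_{w}=M^{n}D_{M}\cap D$ does check out). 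Your reductions of (1) and (2) to (3) agree with the paper's, via Lemma \ref{Lemma TX2}.
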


\begin{proof}
We shall prove (3) and explain why it should work for the other two cases.
For (3) note that $I_{t}(D)\vartriangleleft P$ for $\Leftrightarrow \forall
A\in I_{t}(D)$ $(A\neq D$ $\Rightarrow \exists \mathcal{\gamma }\in \Gamma
(A\subseteq \mathcal{\gamma }))$ where $\Gamma $ is the set determined by $%
P=\,$"--- is a proper $t$-invertible $t$- (resp., nonzero principal,
invertible) ideal"$.$ Then, by the condition, every maximal $t$-ideal
(maximal ideal) of $D$ is $t$-invertible (resp., principal, invertible). By
Lemma \ref{Lemma UX2} we have for each maximal $t$-ideal $M$ (maximal ideal $%
M)$ $\cap (M^{n})_{v}=(0)$ (resp., $\cap M^{n}=(0),$ since powers of
principal (invertible) ideals are $v$-ideals). Thus each maximal $t$-ideal
(maximal ideal) is of height one. Thus $D$ is of $t$-dimension one (resp.,
of dimension one). Now, in each case, $MD_{M}$ is of height one and
principal, forcing $D_{M}$ to be a discrete rank one valuation domain for
each maximal $t$-ideal (maximal ideal) $M$. This makes $D$ completely
integrally closed, for $D=\cap D_{M}$ where $M$ ranges over maximal $t$%
-ideals (maximal ideals). Now apply Proposition \ref{Proposition UX1}, using
the fact that each maximal $t$-ideal (maximal ideal) is divisorial, being a $%
t$-invertible $t$-ideal (principal (invertible) ideal). The converse is
obvious in each case.
\end{proof}

\begin{proposition}
\label{Proposition UX4}Let $D$ be a $t$-ACC domain. Then (1) $D$ is a UFD if
and only if $I_{t}(D)\vartriangleleft P$ for $P=\,$"--- is a proper nonzero
principal ideal" and (2) $D$ is a locally factorial Krull domain if and only
if $I_{t}(D)\vartriangleleft P$ for $P=\,$"--- is a proper invertible ideal".
\end{proposition}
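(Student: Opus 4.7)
The plan is to reduce both statements to Proposition~\ref{Proposition UX3}(3) plus a classical characterization of UFDs / locally factorial Krull domains in terms of their height one primes. The key observation is that a principal ideal is automatically $t$-invertible and an invertible ideal is also $t$-invertible, so the hypothesis of Proposition~\ref{Proposition UX3}(3) is satisfied in both cases.

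For (1), the forward direction is easy: if $D$ is a UFD then every maximal $t$-ideal is generated by a prime element (since in a UFD the maximal $t$-ideals coincide with the height one primes, each of which is principal), so $I_t(D)\vartriangleleft P$ with $P=$ "--- is a proper nonzero principal ideal". For the converse, assuming $I_t(D)\vartriangleleft P$, every maximal $t$-ideal is principal, hence $t$-invertible; by Proposition~\ref{Proposition UX3}(3) the $t$-ACC hypothesis forces $D$ to be a Krull domain. In a Krull domain the maximal $t$-ideals are exactly the height one primes, and here each such prime is principal. A Krull domain in which every height one prime is principal is a UFD (this is standard and also follows, for instance, from the fact noted in the excerpt that every $t$-ideal of a Krull domain is a $t$-product of prime $t$-ideals).

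For (2), the converse direction is the same template: $I_t(D)\vartriangleleft P$ with $P=$ "--- is a proper invertible ideal" makes every maximal $t$-ideal invertible, hence $t$-invertible; Proposition~\ref{Proposition UX3}(3) delivers that $D$ is Krull; and then every height one prime of $D$ is invertible, so $D$ is a locally factorial Krull domain by Anderson's theorem \cite[Theorem~1]{And 1978}, which is cited in the proof of Proposition~\ref{Proposition UX1}(2). The forward direction is the same as in Proposition~\ref{Proposition UX1}(2): in a locally factorial Krull domain every maximal $t$-ideal is invertible by \cite[Theorem~1]{And 1978} (equivalently by \cite[Theorem~1.10]{Zaf 1986}), giving $I_t(D)\vartriangleleft P$.

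There is no real obstacle — the $t$-ACC hypothesis does all the heavy lifting through Proposition~\ref{Proposition UX3}, after which the arguments reduce to invoking standard characterizations. The only thing to be careful about is not to double-count work: one should simply write that the hypothesis implies each maximal $t$-ideal is $t$-invertible, quote Proposition~\ref{Proposition UX3}(3) to get the Krull conclusion, and then upgrade to UFD or locally factorial via the principal / invertible strengthening.
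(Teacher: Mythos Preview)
Your proposal is correct. The only difference from the paper's argument is one of packaging: the paper does not cite Proposition~\ref{Proposition UX3}(3) as a black box but instead re-runs its proof inline (Lemma~\ref{Lemma UX2} forces $\cap M^n=(0)$, hence each maximal $t$-ideal has height one, hence $D_M$ is a DVR, hence $D$ is completely integrally closed, and then Proposition~\ref{Proposition UX1} gives Krull). Your route---observing that principal or invertible implies $t$-invertible and then simply invoking Proposition~\ref{Proposition UX3}(3)---is more economical and avoids the redundancy, while the paper's version has the minor expository advantage of being self-contained. The upgrade from Krull to UFD (respectively, locally factorial Krull) via principal (respectively, invertible) height-one primes is handled the same way in both, with the paper appealing to \cite[Theorem~1.10]{Zaf 1986} for the locally factorial case just as you cite \cite[Theorem~1]{And 1978}.
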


\begin{proof}
We shall prove (1) and explain why it should work for the other case. For
(1) note that $I_{t}(D)\vartriangleleft P$ for $P=\,$"--- proper nonzero
principal (invertible) ideal" $\Leftrightarrow \forall A\in I_{t}(D)$ $%
(A\neq D$ $\Rightarrow \exists \mathcal{\gamma }\in \Gamma (A\subseteq 
\mathcal{\gamma }))$ where $\Gamma $ is the set determined by $P$ returning $%
T.$ Then, by the condition, every maximal $t$-ideal of $D$ is principal
(invertible). By Lemma \ref{Lemma UX2} we have for each maximal $t$-ideal $M$%
, $\cap M^{n}=(0)$ $,$ since powers of principal (invertible) ideals are $v$%
-ideals. Thus each maximal $t$-ideal is of height one. Thus $D$ is of $t$%
-dimension one. Now, in each case, $MD_{M}$ is of height one and principal,
forcing $D_{M}$ to be a rank one valuation domain for each maximal $t$%
-ideal. This makes $D=\cap D_{M},$ where $M$ ranges over maximal $t$-ideals,
a completely integrally closed domain. Now apply Proposition \ref%
{Proposition UX1}, using the fact that each maximal $t$-ideal is divisorial,
being principal or invertible. This gets us the Krull conclusion. Now recall
that in a Krull domain $D,$ $A_{t}=(P_{1}^{n_{1}}...P_{r}^{n_{r}})_{t}.$
Then, in case of (2), $D$ is locally factorial by \cite[Theorem 1.10]{Zaf
1986} and, in case of (1), $D$ is factorial because every principal ideal is
a product of prime powers. The converse, in each case, is obvious in that a
UFD (locally factorial Krull domain) is Krull every maximal $t$-ideal of
whose is principal (resp., invertible).
\end{proof}

As already mentioned, an integral domain $D$ that satisfies ACC on integral
divisorial ideals is called a Mori domain. Obviously a Noetherian domain is
a Mori domain. It is easy to check that for every nonzero integral ideal $A$
of a Mori domain $D$ there are elements $a_{1},..,a_{r}\in A$ such that $%
A_{v}=(a_{1},..,a_{r})_{v}.$ So the inverse of a nonzero ideal of a Mori
domain is a $v$-ideal of finite type. Hence a $v$-invertible ideal in a Mori
domain is $t$-invertible. It is well known that a domain $D$ is a Krull
domain if, and only if, every nonzero ideal of $D$ is $t$-invertible (see
e.g. \cite[Theorem 2.5]{MZ 1991}) and thus a Krull domain is Mori too.
Noting that a Mori domain is a $t$-ACC domain and that Noetherian is Mori
too, we have the following direct corollaries.

\begin{corollary}
\label{Corollary UX5}Let $D$ be a Mori domain. Then (1) $D$ is a PID if and
only if $I(D)\vartriangleleft P$ for $P=\,$"--- is a proper nonzero
principal ideal", (2) $D$ is a Dedekind domain if and only if $%
I(D)\vartriangleleft P$ for $P=\,$"--- is a proper invertible ideal", (3) $D$
is a Krull domain if and only if $I_{t}(D)\vartriangleleft P$ for $P=\,$"---
is a proper $t$-invertible $t$-ideal"$,$ (4) $D$ is a UFD if and only if $%
I_{t}(D)\vartriangleleft P$ for $P=\,$"--- is a proper nonzero principal
ideal" and (5) $D$ is a locally factorial Krull domain if and only if $%
I_{t}(D)\vartriangleleft P$ for $P=\,$"--- is proper invertible ideal".
\end{corollary}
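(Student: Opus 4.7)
The plan is to reduce the entire corollary to Propositions \ref{Proposition UX3} and \ref{Proposition UX4} by observing that a Mori domain satisfies $t$-ACC. Once this single reduction is in place, all five statements follow immediately by invoking the corresponding clauses of those propositions, with the converses being the standard fact that PIDs, Dedekind, Krull, UFDs, and locally factorial Krull domains are Mori (indeed each is Noetherian or at least has ACC on divisorial ideals, since a Krull domain satisfies ACC on divisorial ideals).

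The main step, then, is to verify Mori $\Rightarrow$ $t$-ACC. Recall that $D$ is a Mori domain iff $D$ has ACC on integral divisorial ideals. As noted in the paragraph preceding the corollary, for every nonzero integral ideal $A$ of a Mori domain there exist $a_1, \dots, a_r \in A$ with $A_v = (a_1, \dots, a_r)_v$, so $A_v$ is a $v$-ideal of finite type. Now take any ascending chain $I_1 \subseteq I_2 \subseteq \cdots$ of $t$-invertible $t$-ideals of $D$. Since every $t$-invertible $t$-ideal is divisorial (recalled in Section 2 from \cite{Zaf 2000}), this is in particular an ascending chain of divisorial ideals, hence stabilizes by the Mori hypothesis. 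Thus $D$ is a $t$-ACC domain.

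With this in hand, the corollary unfolds mechanically: clauses (1), (2), (3) are precisely the conclusions of Proposition \ref{Proposition UX3}(1), (2), (3) applied to our $t$-ACC domain $D$; clauses (4) and (5) are Proposition \ref{Proposition UX4}(1) and (2) applied to $D$. For the converses, one notes that PIDs and Dedekind domains are Noetherian hence Mori, while Krull, UFD, and locally factorial Krull domains all satisfy ACC on divisorial ideals (any $t$-ideal in a Krull domain is a $t$-product of height-one primes, so the divisorial closure behaves well), hence are Mori.

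I do not expect any genuine obstacle: the only nontrivial content is the observation that $t$-invertible $t$-ideals sit inside the class of divisorial ideals, which transports Mori's ACC on divisorials into $t$-ACC. Given how cleanly Propositions \ref{Proposition UX3} and \ref{Proposition UX4} are stated, the proof will read as little more than a one-line citation for each of the five clauses.
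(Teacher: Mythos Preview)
Your proposal is correct and matches the paper's approach exactly: the paper simply remarks ``Noting that a Mori domain is a $t$-ACC domain\ldots we have the following direct corollaries,'' and then states Corollary~\ref{Corollary UX5} without further proof. Your only addition is spelling out why Mori $\Rightarrow$ $t$-ACC (via $t$-invertible $t$-ideals being divisorial), which is a welcome clarification; note however that your separate discussion of the converses is unnecessary, since $D$ is already assumed Mori in the hypothesis and Propositions~\ref{Proposition UX3} and~\ref{Proposition UX4} are already iff statements.
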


\begin{corollary}
\label{Corollary VX}Let $D$ be a Noetherian domain. Then (1) $D$ is a PID if
and only if $I(D)\vartriangleleft P$ for $P=\,$"--- is a proper nonzero
principal ideal" and (2) $D$ is a Dedekind domain if and only if $%
I(D)\vartriangleleft P$ for $P=\,$"--- is a proper invertible ideal".
\end{corollary}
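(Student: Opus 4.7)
The plan is to derive this as an immediate specialization of Corollary \ref{Corollary UX5}. The essential observation is that every Noetherian domain is a Mori domain: Noetherian means ACC on \emph{all} integral ideals, which trivially restricts to ACC on the subcollection of integral divisorial ideals, which is the defining property of a Mori domain (recalled just before Corollary \ref{Corollary UX5}).

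Given this, part (1) of the present corollary is exactly part (1) of Corollary \ref{Corollary UX5} applied to $D$, and part (2) is exactly part (2) of Corollary \ref{Corollary UX5}. Concretely, for the forward direction one notes that any PID (resp.\ Dedekind domain) obviously satisfies $I(D)\vartriangleleft P$ for the respective $P$, since in a PID every proper nonzero ideal is contained in a maximal ideal which is principal, and in a Dedekind domain every proper ideal is contained in a maximal, hence invertible, ideal. For the converse direction, the hypothesis $I(D)\vartriangleleft P$ forces every maximal ideal of $D$ to be principal (resp.\ invertible) by Lemma \ref{Lemma TX1}, and then the Mori/Noetherian ACC hypothesis feeds into Lemma \ref{Lemma UX2} to guarantee $\bigcap M^n = (0)$ for each maximal $M$, yielding $\dim D = 1$ and $D_M$ a DVR at each $M$; completeness of integral closure then gives the Dedekind conclusion, with the PID conclusion following from unique factorization of ideals together with principality of all maximal ideals.

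There is no real obstacle here: the work has already been done in Proposition \ref{Proposition UX3} and Corollary \ref{Corollary UX5}, and the step from ``Mori'' to ``Noetherian'' is a pure implication. The proof should therefore consist of a single sentence invoking Corollary \ref{Corollary UX5} after noting that Noetherian $\Rightarrow$ Mori, with perhaps a parenthetical remark that parts (3)--(5) of Corollary \ref{Corollary UX5} also apply verbatim but have been omitted here because in the Noetherian setting they reduce to the classical characterizations of UFDs and locally factorial Noetherian Krull domains.
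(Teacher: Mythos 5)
Your proposal is correct and follows the paper's own route exactly: the paper introduces Corollary \ref{Corollary VX} with the remark that ``Noetherian is Mori too'' and presents it as a direct corollary of Corollary \ref{Corollary UX5} (itself resting on Proposition \ref{Proposition UX3} via the chain Noetherian $\Rightarrow$ Mori $\Rightarrow$ $t$-ACC). Your recapitulation of the underlying mechanism (Lemma \ref{Lemma TX1} to reduce to maximal ideals, Lemma \ref{Lemma UX2} to get $\bigcap M^{n}=(0)$ and hence dimension one) matches the paper's argument for that proposition.
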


\begin{corollary}
\label{Corollary WX}Let $D$ be a Mori domain. Then (1) $D$ is a UFD if and
only if $I_{t}(D)\vartriangleleft P$ for $P=\,$"--- is a proper nonzero
principal ideal", (2) $D$ is a locally factorial Krull domain if and only if 
$I_{t}(D)\vartriangleleft P$ for $P=\,$"--- is a proper invertible integral
ideal"$,$ (3) $D$ is a Krull domain if and only if $I_{t}(D)\vartriangleleft
P$ for $P=\,$"--- is a proper $t$-invertible $t$-ideal"$.$
\end{corollary}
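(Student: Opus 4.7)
The plan is to observe that every Mori domain is automatically a $t$-ACC domain, and then read off the statement directly from Propositions \ref{Proposition UX3} and \ref{Proposition UX4}. To justify the reduction, recall that a Mori domain satisfies ACC on integral divisorial ideals by definition, and that a $t$-invertible $t$-ideal is always divisorial (as noted in Section \ref{Section S2} in the discussion following the introduction of $\ast$-invertibility). Hence every ascending chain of $t$-invertible $t$-ideals is a chain of divisorial ideals and so must stabilize; thus $D$ satisfies $t$-ACC.

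With this observation in hand, parts (1) and (2) are exactly parts (1) and (2) of Proposition \ref{Proposition UX4} specialized to the Mori domain $D$, and part (3) is part (3) of Proposition \ref{Proposition UX3}. Nothing beyond invoking those propositions is required for the forward implications.

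For the converses I would simply note that every UFD, every locally factorial Krull domain, and every Krull domain is in particular Mori (a Krull domain is Mori because every nonzero ideal is $t$-invertible and hence a divisorial ideal of finite type, as remarked just before the statement of the corollary). In each case the maximal $t$-ideals are, respectively, principal, invertible, and $t$-invertible $t$-ideals, so $I_t(D)\vartriangleleft P$ follows from Lemma \ref{Lemma TX1}.

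I do not anticipate any real obstacle. The only step worth flagging is the passage from the definition of Mori (ACC on divisorial ideals) to the $t$-ACC hypothesis used in the cited propositions; once that containment of chains is noted, the corollary is a pure packaging of the previous two results, exactly as Corollary \ref{Corollary UX5} already anticipated.
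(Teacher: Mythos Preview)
Your proposal is correct and matches the paper's approach exactly: the paper states just before Corollary \ref{Corollary UX5} that a Mori domain is a $t$-ACC domain and then lists Corollaries \ref{Corollary UX5}--\ref{Corollary WX} as ``direct corollaries,'' with no further argument supplied. Your explicit justification that $t$-invertible $t$-ideals are divisorial (hence ACC on divisorial ideals gives $t$-ACC) and your invocation of Propositions \ref{Proposition UX3}(3) and \ref{Proposition UX4}(1),(2) are precisely what the paper intends.
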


Finally, consider the following scheme of results.

\begin{proposition}
\label{Proposition XX}Suppose that $D$ satisfies ACCP (ACC on principal
ideals). Then (1) $D$ is a PID if and only if $I(D)\vartriangleleft P$ for $%
P=\,$"--- is a proper nonzero principal ideal" and (2) $D$ is a UFD if and
only if $I_{t}(D)\vartriangleleft P$ for $P=\,$"--- is a proper nonzero
principal ideal"$.$
\end{proposition}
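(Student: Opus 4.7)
The ``if'' directions are immediate: a PID is Noetherian and has every ideal principal, while a UFD satisfies ACCP and has every maximal $t$-ideal principal (any maximal $t$-ideal in a UFD is of $t$-height one, hence a principal prime). For the ``only if'' directions I would mirror the proofs of Propositions~\ref{Proposition UX3} and~\ref{Proposition UX4}, exploiting the observation that when $P$ is ``is a proper nonzero principal ideal'' the ideals that appear in the chain produced by Lemma~\ref{Lemma UX2}, namely $xI^{-1}\subseteq xI^{-2}\subseteq\cdots$, are themselves principal, so ACCP suffices in place of $t$-ACC.

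For (2) the most direct route is: by Lemma~\ref{Lemma TX1}, every maximal $t$-ideal of $D$ is principal and is therefore generated by a prime element. Given any atom $a\in D$, the principal $t$-ideal $(a)$ lies in some maximal $t$-ideal $(p)$ with $p$ prime, and $p\mid a$ forces $a$ to be an associate of $p$ and hence itself a prime. Since ACCP makes $D$ atomic, the combination ``atomic $+$ every atom prime'' gives that $D$ is a UFD.

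For (1), first note that principal ideals are $t$-ideals, so Lemma~\ref{Lemma TX2}(3) delivers $I_{t}(D)\vartriangleleft P$, and by (2), $D$ is a UFD. By Lemma~\ref{Lemma TX1} every maximal ideal of $D$ is principal, and in a UFD $\bigcap_{n}(p^{n})=0$ for any prime $p$, so every principal prime has height one. Thus $D$ is one-dimensional and every nonzero prime of $D$ is maximal and principal, in particular finitely generated. Cohen's theorem now forces $D$ to be Noetherian; being one-dimensional and integrally closed, $D$ is Dedekind, and a Dedekind domain whose maximal ideals are all principal is a PID.

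The step I expect to be most delicate is precisely the jump from ``one-dimensional UFD with principal maximal ideals'' to ``Noetherian/Dedekind''. Cohen's theorem offers the cleanest bridge, but an alternative argument entirely within the framework of the paper is to apply the ACCP version of Lemma~\ref{Lemma UX2} to conclude $\bigcap_{n}M^{n}=0$ for each principal maximal $M$, infer that each $D_{M}$ is a DVR so that $D$ is almost Dedekind, and then invoke Proposition~\ref{Proposition UX1}(6), noting that principal implies divisorial so $I(D)\vartriangleleft P$ transfers from $P=$ ``principal'' to $P=$ ``divisorial''.
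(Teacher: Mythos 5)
Your proof is correct, but for part (2) it takes a genuinely different and more elementary route than the paper. The paper argues as follows: $I_t(D)\vartriangleleft P$ (resp.\ $I(D)\vartriangleleft P$) makes every maximal $t$-ideal (resp.\ maximal ideal) $M$ principal, say $M=\gamma D$; ACCP then forces $M$ to have height one (a nonzero prime $Q\subsetneq \gamma D$ would lie in $\bigcap \gamma^{n}D$, producing an infinite chain $xD\subsetneq (x/\gamma)D\subsetneq\cdots$); hence each $D_M$ is a rank-one discrete valuation domain, $D=\bigcap D_M$ is completely integrally closed with every maximal ($t$-)ideal principal (hence divisorial), so $D$ is Krull with every height-one prime principal, i.e.\ a UFD, and for (1) a UFD in which every height-one prime is maximal is a PID. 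Your proof of (2) instead uses ACCP only to get atomicity and then invokes the ``every atom is an associate of a prime generator of a maximal $t$-ideal'' observation --- which the paper itself records at the start of Section~\ref{Section S2} but does not deploy here --- so you reach the UFD conclusion without the Krull-domain, H-domain, or complete-integral-closure machinery; this is cleaner and more self-contained. Your proof of (1) then rides on (2): every maximal ideal is a principal prime, hence of height one by the $\bigcap(p^n)=0$ argument, so $D$ is a one-dimensional UFD, and you close via Cohen's theorem and the Dedekind characterization where the paper simply asserts that a UFD with every height-one prime maximal is a PID; either bridge is fine (Kaplansky's ``every prime principal implies PID'' would be the shortest). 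The only step I would tighten is your fallback suggestion at the end: knowing $\bigcap_n M^n=(0)$ in $D$ does not by itself transfer to $\bigcap_n (MD_M)^n=(0)$ in $D_M$ (ACCP need not survive localization), so to get each $D_M$ a DVR you should first extract, as the paper does, that $M$ has height one and then use that a one-dimensional local domain with principal maximal ideal is a DVR; but since this is only your alternative route, it does not affect the correctness of the main argument.
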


\begin{proof}
$I_{t}(D)\vartriangleleft P$ for $P=\,$"--- is a proper nonzero principal
ideal"$\Leftrightarrow \forall A\in I_{\ast }(D)$ $(A\neq D$ $\Rightarrow
\exists \mathcal{\gamma }\in \Gamma (A\subseteq \mathcal{\gamma }))$ where $%
\Gamma $ is the set of proper nonzero principal ideals of $D$ fixed by $P$
and $\ast =d$ or $t$. Then, by the condition, for any maximal (maximal $t$%
-ideal) $M$ of $D$ we have $M\subseteq \mathcal{\gamma }$ for some $\mathcal{%
\gamma }\in \Gamma $ and so $M=$ $\gamma D.$ Claim that, because of the
ACCP, $M$ is of height one. (For if not, then there is $Q\subseteq \cap 
\mathcal{\gamma }^{n}D$. So for every nonzero $x\in Q,$ $x$ is divisible by
every power of $\mathcal{\gamma },$ giving rise to an infinite ascending
chain $xD\subsetneq \frac{x}{\mathcal{\gamma }}D\subsetneq \frac{x}{\mathcal{%
\gamma }^{2}}\subsetneq ...\subsetneq \frac{x}{\mathcal{\gamma }^{n}}%
D\subsetneq ...$ which is impossible in the presence of ACCP on $D.)$ Now $%
MD_{M}$ is principal and of height one, making $D_{M}$ a rank one discrete
valuation domain and making $D=\cap D_{M}$ completely integrally closed with
every maximal ($t$-) ideal principal. This makes $D$ a Krull domain with
every height one prime a principal ideal and so a UFD. Finally, a UFD with
every height one prime maximal is a PID. The converse, in each case is
straightforward.
\end{proof}

\section{\protect\bigskip A Universal Restriction with Conditions \label%
{Section S3}}

Call a directed p.o. group $G$ an almost l.o. group if for each finite
subset $X=\{x_{1},...,x_{r}\}\subseteq G^{+}$ there is a positive integer $%
n=n(X)$ such that $inf(x_{1}^{n},...x_{r}^{n})\in G^{+}.$ Almost l.o. groups
were introduced in \cite{DLMZ 2001} and further studied in \cite{Yang 2008}.
One can talk about a commutative p.o. monoid $M$ with least element $1$ and
a pre-assigned set $\Gamma $ such that for all $x_{1},...,x_{r}\in M$ with $%
\mathcal{L}(x_{1},...,x_{r})\neq 1,$ there being a $\mathcal{\gamma }\in
\Gamma $ such that $x_{1}^{n},...,x_{r}^{n}\geq \mathcal{\gamma }.$ As ring
theory provides a plethora of examples of this concept, we turn to ring
theory.

Let $D$ be a domain with a finite type star operation $\ast $ defined on it,
let $I_{\ast }(D)$ be the set of proper $\ast $-ideals of $D$ and let $%
\Gamma _{I_{\ast }(D)}(P)$ be a non-empty subset of $I_{\ast }(D)$ defined
by a predicate $P$ such that for each $A\in $ $I_{\ast }(D)$ there is an
integer $n=n(A)\geq 1$ with $A^{n}\subseteq \mathcal{\gamma }$ for some $%
\mathcal{\gamma }\in \Gamma _{I_{\ast }(D)}(P).$ Let us say $I_{\ast }(D)$
meets $P$ with a twist when this happens and denote it by $I_{\ast
}(D)\vartriangleleft ^{t}P.$ We start with a motivating example of this
notion.

\begin{example}
\label{Example A}Let $D$ be a Dedekind domain with torsion class group, let $%
K$ be the quotient field of $D$ and let $X$ be an indeterminate over $K.$
Then the ring $R=D+XK[X]$ is such that $I(R)\vartriangleleft ^{t}P$ where $%
P= $ "--- is a principal ideal"$.$
\end{example}

Illustration: Recall, as we have already done, from \cite[Theorem 4.21]{CMZ
1978} that maximal ideals of $R$ are of the form $M+XK[X],$ where $M$ is a
maximal ideal of $D,$ or of the form $(1+Xf(X))R$ where $1+Xf(X)$ is
irreducible in $K[X].$ Now since for each maximal ideal $M$ of $D$ we have $%
M^{n}\subseteq dD$ for some positive integer $n$ and some nonzero $d\in D$
we have $(M+XK[X])^{n}=$ $M^{n}+XK[X]\subseteq dD+XK[X].$ Next since for
each maximal ideal $\mathcal{M}$ of $R$, either $\mathcal{M}$ is principal,
and hence is contained in a principal ideal in $\Gamma _{I_{\ast }(R)}(P)$
or $\mathcal{M}$ is such that $\mathcal{M}^{n}$ is contained in a principal
ideal for some positive integer $n,$ the same must hold for every ideal $I$
of $R$.

The above example leads to the following statement.

\begin{proposition}
\label{Proposition A1}(1)$I(D)\vartriangleleft ^{t}P$ where $P=$ "---- is a
proper nonzero finitely generated ideal" if and only if for every maximal
ideal $M$ of $D$ we have $M^{n}\subseteq \mathcal{\gamma }\in \Gamma
_{I(D)}(P)$ and (2) let $L$ be an extension field of $K=qf(D)$ and let $X$
be an indeterminate over $L.$ Then $I(D)\vartriangleleft ^{t}P$ where $P=$
"---- is a proper nonzero finitely generated ideal" if and only if $%
I(R)\vartriangleleft ^{t}P$ where $R=D+XL[X].$
\end{proposition}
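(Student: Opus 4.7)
My plan is to handle (1) and (2) separately, reducing both to statements about maximal ideals via the same poset idea that underlies Lemma \ref{Lemma TX1}. For (1), the forward implication is immediate, since every maximal ideal of $D$ belongs to $I(D)$ and so the hypothesis $I(D)\vartriangleleft^t P$ applies directly. For the converse, I would take an arbitrary $A\in I(D)$, enclose it in a maximal ideal $M$ of $D$, and transport the twist: if $M^n\subseteq\gamma$ with $\gamma$ proper, nonzero, and finitely generated, then $A^n\subseteq M^n\subseteq\gamma$. Thus (1) is essentially the twist-analogue of Lemma \ref{Lemma TX1}.

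For (2), I plan to use the description of maximal ideals of $R=D+XL[X]$ recalled in Section~\ref{Section S2}: each is either of the form $MR=M+XL[X]$ with $M$ a maximal ideal of $D$, or a principal ideal $(1+Xf(X))R$. Principal maximals of $R$ are trivially contained in themselves, which already lie in $\Gamma_{I(R)}(P)$, so by (1) the whole question reduces to comparing the twist condition on the maximals $M$ of $D$ with the twist condition on the maximals $MR$ of $R$. The forward direction will then be routine: from $M^n\subseteq J$ with $J\subseteq D$ proper, nonzero, and finitely generated, extending to $R$ yields $(MR)^n=M^nR\subseteq JR$, and a one-line constant-term check shows that $JR$ is finitely generated, nonzero, and proper in $R$ (the properness because $1\in JR$ would force $1=\sum d_i r_i$, and the constant term of the right-hand side lies in $J$).

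The converse is where I expect the main obstacle. Here one is handed a finitely generated proper witness $\Gamma=(f_1,\dots,f_k)R$ in $R$ with $M^nR\subseteq\Gamma$, and one must manufacture a witness inside $D$. The natural candidate is $J=(d_1,\dots,d_k)D$ with $d_i=f_i(0)$: a constant-term computation gives $M^n\subseteq J$, and $J\neq 0$ because $M^n\neq 0$. The delicate step is to show $J\neq D$. If $J=D$, some $D$-linear combination of the $d_i$ equals $1$, which lifts to an element of the form $1+Xh\in\Gamma$ with $h\in L[X]$. Since $\Gamma$ also contains a nonzero $n_0\in M^n\subseteq D$, and $n_0$ is a unit in $L$, I would then apply the identity
\[
1=(1+Xh)-n_0\cdot\bigl(Xh/n_0\bigr),
\]
observing that $Xh/n_0\in XL[X]\subseteq R$, to conclude $1\in\Gamma$ and contradict the properness of $\Gamma$. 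This ``division in $L$'' trick is what makes the $D+XL[X]$ construction behave well here and is, in my view, the heart of the argument; everything else is an instance of the maximal-ideal reduction from (1).
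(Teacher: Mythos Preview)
Your proof is correct and follows essentially the same architecture as the paper's: part (1) is the twist-analogue of Lemma \ref{Lemma TX1}, and part (2) is reduced via (1) to the maximal ideals of $R$, with the principal maximals handled trivially and the maximals $M+XL[X]$ handled by transferring a witness between $D$ and $R$.

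The one place where your argument differs in execution is the converse of (2). The paper observes that the witness $\gamma_{\mathcal M}\subseteq R$ meets $D$ nontrivially (since $\wp^{n}\subseteq\gamma_{\mathcal M}$) and then invokes \cite[Lemma 1.1]{CMZ 1986} to conclude at once that $\gamma_{\mathcal M}=\gamma+XL[X]=\gamma R$ for an ideal $\gamma$ of $D$; properness and finite generation of $\gamma$ then drop out immediately. You instead set $J=(f_1(0),\dots,f_k(0))D$ and verify properness by your ``division in $L$'' trick. These are the same thing: your trick is exactly the content of the cited lemma (any ideal of $R$ containing a nonzero element of $D$ automatically contains $XL[X]$, because $Xh=n_0\cdot(Xh/n_0)$), so your $J$ coincides with the paper's $\gamma=\gamma_{\mathcal M}\cap D$. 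Your version is self-contained; the paper's is shorter by outsourcing that step to the reference.
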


\begin{proof}
(1) Suppose that for every ideal $A$ of $D$ we have some integer $n=n(A)\geq
1$ and a $\mathcal{\gamma }_{A}\in \Gamma _{I(D)}$ such that $A^{n}\subseteq 
\mathcal{\gamma }_{A}$ then the same holds if $A$ is a maximal ideal of $D.$
Conversely suppose that for every maximal ideal $M$ of $D$ we have some
integer $n=n(M)\geq 1$ and some $\mathcal{\gamma }_{M}\in \Gamma _{I(D)}$
such that $M^{n(M)}\subseteq \mathcal{\gamma }_{M}$ and let $A$ be a proper
nonzero ideal of $D.$ Then $A\subseteq M$ for some maximal ideal $M$ of $D$
and $A^{n(M)}\subseteq M^{n(M)}\subseteq \mathcal{\gamma }_{M}.$ For (2) let 
$I(D)\vartriangleleft ^{t}P$, where $P$ is as given, then, by (1), for every
maximal ideal $\wp $ of $D$ there is $n=n(\wp )$ such that $\wp
^{n}\subseteq \mathcal{\gamma }_{\wp }$ for some $\mathcal{\gamma }_{\wp
}\in \Gamma _{I(D)}.$ Since every maximal ideal of $D+XL[X]$ is either
principal, and hence finitely generated, or of the form $\wp +XL[X]$ where $%
\wp $ is a maximal ideal of $D$ \cite[Lemmas 1.2, 1.5]{CMZ 1986}, for every
ideal $A$ of $R$ there is $n=1$ or $n(\wp )\geq 1$ such that $A^{n}\subseteq 
\mathcal{\gamma }_{A}\in \Gamma _{I(R)},$ so $I(R)\vartriangleleft ^{t}P.$
Conversely suppose that $I(R)\vartriangleleft ^{t}P.$ Then, in particular,
for maximal ideals $\mathcal{M}$ of the form $\wp +XL[X]$ there are positive
integers $n(\mathcal{M})$ such that $(\wp +XL[X])^{n(\mathcal{M})}$ $=\wp
^{n(\mathcal{M})}+XL[X]$ $\subseteq \mathcal{\gamma }_{\mathcal{M}}\in
\Gamma _{I(R)}.$ But then $\mathcal{\gamma }_{\mathcal{M}}\cap D\neq (0)$
forcing $\mathcal{\gamma }_{\mathcal{M}}=\mathcal{\gamma }+XL[X]=\mathcal{%
\gamma }(D+XL[X])$ \cite[Lemma1.1]{CMZ 1986}, where $\mathcal{\gamma }$ is
finitely generated because $\mathcal{\gamma }_{\mathcal{M}}$ is. This gives $%
\wp ^{n(\mathcal{M})}+XL[X]\subseteq \mathcal{\gamma }+XL[X]$ and modding
out $XL[X]$ we get $\wp ^{n(\mathcal{M})}\subseteq \mathcal{\gamma }\in
\Gamma _{I(D)}=\{\mathcal{\gamma }\neq (0)|\mathcal{\gamma }+XL[X]\in \Gamma
_{I(R)}\}.$
\end{proof}

\begin{proposition}
\label{Proposition A2}(1) $I(D)\vartriangleleft ^{t}P$ where $P=$ "--- is a
proper $t$-ideal of finite type" if and only if for every maximal ideal $M$
of $D,$ there is $n=n(M)\geq 1$ such that $M^{n}\subseteq \mathcal{\gamma }%
\in \Gamma _{I(D)}(P),$ (2) $I_{t}(D)\vartriangleleft ^{t}P$ where $P=$ "---
is a proper $t$-ideal of finite type" if and only if for every maximal $t$%
-ideal $M$ of $D$ we have $M^{n}\subseteq \mathcal{\gamma }\in \Gamma
_{I(D)}(P)$, (3) let $L$ be an extension field of $K$ and let $X$ be an
indeterminate over $L.$ Then $I(D)\vartriangleleft ^{t}P$ where $P=$ "--- is
a proper $t$-ideal of finite type" if and only if $I(R)\vartriangleleft
^{t}P $ where $R=D+XL[X]$ and (4) let $L$ be an extension field of $K$ and
let $X$ be an indeterminate over $L.$ Then $I_{t}(D)\vartriangleleft ^{t}P$
where $P= $ "--- is a proper $t$-ideal of finite type" if and only if $%
I_{t}(R)\vartriangleleft ^{t}P$ where $R=D+XL[X],$ (5) Repeat (3) and (4)
for $R=D+XL[[X]].$
\end{proposition}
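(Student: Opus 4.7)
The plan is to mirror the template of Proposition \ref{Proposition A1}, replacing ``finitely generated ideal'' by ``$t$-ideal of finite type'' throughout, and relying on the behavior of the $v$- and $t$-operations under the $D+XL[X]$ (and $D+XL[[X]]$) construction that was already indicated in the excerpt. Parts (1) and (2) are essentially immediate. For (1), the forward direction is trivial since every maximal ideal is in $I(D)$; for the converse, given any proper ideal $A$ choose a maximal ideal $M\supseteq A$, and then $A^{n(M)}\subseteq M^{n(M)}\subseteq \mathcal{\gamma}_{M}$ with $\mathcal{\gamma}_{M}$ a $t$-ideal of finite type, which is exactly what $I(D)\vartriangleleft^{t}P$ requires. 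Part (2) is the same, with ``maximal ideal'' replaced by ``maximal $t$-ideal'' and using that every proper integral $t$-ideal is contained in some maximal $t$-ideal.

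The engine for (3) and (4) is the observation that for an ideal $A$ of $D$ one has $A+XL[X]=AR$ and, by the formula $P_{v}=(P\cap D)_{v}+XL[X]$ recalled from \cite{ACZ 2015} (and its $t$-analogue $P_{t}=(P\cap D)_{t}+XL[X]$), $AR$ is a $t$-ideal of finite type in $R$ if and only if $A$ is a $t$-ideal of finite type in $D$: a finite $v$-generating set $(a_{1},\ldots,a_{r})_{v}=A_{v}$ gives $((a_{1},\ldots,a_{r})R)_{v}=A_{v}+XL[X]=(AR)_{v}$, and conversely one can pull back a finite $v$-generating set of $AR$ to one of $A$ by taking constant terms (coefficients in $D$).

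For the forward direction of (3), suppose every maximal ideal $M$ of $D$ admits $n=n(M)$ with $M^{n}\subseteq \mathcal{\gamma}_{M}$ a $t$-ideal of finite type. By \cite[Lemmas 1.2, 1.5]{CMZ 1986} every maximal ideal of $R$ is either principal (take $n=1$) or of the form $M+XL[X]$; in the latter case $(M+XL[X])^{n}=M^{n}+XL[X]\subseteq \mathcal{\gamma}_{M}+XL[X]=\mathcal{\gamma}_{M}R$, which by the observation above is a $t$-ideal of finite type in $R$. Part (1) applied to $R$ then gives $I(R)\vartriangleleft^{t}P$. Conversely, if $I(R)\vartriangleleft^{t}P$, fix a maximal ideal $M$ of $D$ and consider $\mathcal{M}=M+XL[X]$; pick $n$ and a $t$-ideal of finite type $\mathcal{\gamma}$ of $R$ with $M^{n}+XL[X]=\mathcal{M}^{n}\subseteq \mathcal{\gamma}$. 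Then $\mathcal{\gamma}\cap D\neq(0)$ (take any $0\neq m\in M$ and note $m^{n}\in \mathcal{\gamma}\cap D$), so by \cite[Lemma 1.1]{CMZ 1986}, $\mathcal{\gamma}=\mathcal{\gamma}_{0}+XL[X]=\mathcal{\gamma}_{0}R$ for $\mathcal{\gamma}_{0}=\mathcal{\gamma}\cap D$, and $\mathcal{\gamma}_{0}$ is a $t$-ideal of finite type in $D$ by the observation. Since $M^{n}\subseteq \mathcal{\gamma}_{0}$, part (1) delivers $I(D)\vartriangleleft^{t}P$. Part (4) is the same, using that maximal $t$-ideals of $R$ are either of the form $(1+Xg(X))R$ (principal, hence trivially a $t$-ideal of finite type) or of the form $M+XL[X]$ with $M$ a maximal $t$-ideal of $D$, and using the $t$-analogue of the formulas above. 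Part (5) goes through with $XL[[X]]$ in place of $XL[X]$; it is actually simpler since, per Theorem \ref{Theorem UX0A}(iii)--(iv), every maximal (resp.\ maximal $t$-) ideal of $D+XL[[X]]$ has the form $m+XL[[X]]$, so the ``principal prime trivially meeting $D$'' case does not arise.

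The only real subtlety is verifying that $AR$ is a $t$-ideal of finite type in $R$ precisely when $A$ is one in $D$; everything else is a bookkeeping exercise on the maximal ideal structure of $R$. I would take this equivalence as the first lemma in the write-up and then the four statements fall out mechanically.
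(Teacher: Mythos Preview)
Your proposal is correct and follows essentially the same approach as the paper's proof: parts (1)--(2) reduce to the maximal (resp.\ maximal $t$-) ideal case exactly as in Proposition~\ref{Proposition A1}, and parts (3)--(5) hinge on the structure of maximal ($t$-)ideals of $R$ together with the fact that $A+XL[X]$ is a $t$-ideal of finite type in $R$ if and only if $A$ is one in $D$. The only cosmetic difference is that the paper outsources this last equivalence to \cite[Lemma 3.5]{Zaf 2019}, whereas you propose to isolate it as a lemma and prove it directly from the formula $(A+XL[X])_{v}=A_{v}+XL[X]$; your sketch of that lemma is fine, though in the converse direction (pulling back a finite $v$-generating set from $R$ to $D$) you should make sure to throw in one nonzero constant from $A$ so that $XL[X]$ is already contained in the finitely generated ideal you build.
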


\begin{proof}
(1) The proof works as the proof of (1) of Proposition \ref{Proposition A1}.
(2) The proof works in the same manner as that of (1) of Proposition \ref%
{Proposition A1}, except that here the maximal $t$-ideals are in the focus.
(3) Let $I(D)\vartriangleleft ^{t}P$ where $P=$ "--- is a proper $t$-ideal
of finite type". To show that $I(R)\vartriangleleft ^{t}P$ all we need show
is that for every maximal ideal $\mathcal{M}$ of $R$, there is a positive
integer $n=n(\mathcal{M)}$ such that $\mathcal{M}^{n}\subseteq \mathcal{%
\gamma }_{\mathcal{M}}\in \Gamma _{I(R)}(P).$ Now, as we have shown in the
proof of (2) of Proposition \ref{Proposition A1}, a maximal ideal $\mathcal{M%
}$ of $R$ is either principal and hence contained in some member of $\Gamma
_{I(R)}$ or of the form $\mathcal{M}=M+XL[X],$ where $M$ is a maximal ideal
of $D.$ But then, for $n=n(M)$ we have $M^{n}\subseteq \gamma ,$ where $%
\mathcal{\gamma }$ is a $t$-ideal of finite type in $\Gamma _{I(D)}$,
forcing $\mathcal{M}^{n}=M^{n}+XL[X]\subseteq \gamma R.$ Because $\mathcal{%
\gamma }$ is a $t$-ideal of finite type of $D,$ so is $\gamma R=\mathcal{%
\gamma }+XL[X],$ see e.g. proof of Lemma 3.5 of \cite{Zaf 2019}. Conversely,
suppose that $I(R)\vartriangleleft ^{t}P$ where $P=$ "--- is a proper $t$%
-ideal of finite type"$.$ Here, in particular, for a maximal ideal $\mathcal{%
M}$ of the form $\mathcal{M}=M+XL[X]$ we have a positive integer $n=n(%
\mathcal{M})$ such that $\mathcal{M}^{n(\mathcal{M)}}=M^{n(\mathcal{M)}%
}+XL[X]$ $\subseteq \mathcal{\gamma }_{\mathcal{M}}$ where $\mathcal{\gamma }%
_{\mathcal{M}}$ is a $t$-ideal of finite type of $R.$ Obviously as $M^{n(%
\mathcal{M)}}=(M^{n(\mathcal{M)}}+XL[X])\cap D\subseteq \mathcal{\gamma }_{%
\mathcal{M}}\cap D,$ and as $M^{n(\mathcal{M)}}\cap D\neq (0),$ we conclude
that $\mathcal{\gamma }_{\mathcal{M}}\cap D\neq (0).$ Thus $\mathcal{\gamma }%
_{\mathcal{M}}=\mathcal{\gamma }_{\mathcal{M}}\cap D+XL[X]$ by \cite[Lemma
1.1]{CMZ 1986}. And as observed in the proof of Lemma 3.5 of \cite{Zaf 2019} 
$\mathcal{\gamma }_{\mathcal{M}}=\mathcal{\gamma }_{\mathcal{M}}\cap D+XL[X]$
is a $t$-ideal of $R$ if and only if $\mathcal{\gamma }_{\mathcal{M}}\cap D$
is a $t$-ideal of $D.$ That $\mathcal{\gamma }_{\mathcal{M}}$ is of finite
type if and only if $\mathcal{\gamma }_{\mathcal{M}}\cap D$ is follows from
the fact that $\mathcal{\gamma }_{\mathcal{M}}=(a_{1},...,a_{n})_{v}+XL[X].$
Finally, for (4), let $I_{t}(D)\vartriangleleft ^{t}P$ where $P=$ "--- is a
proper $t$-ideal of finite type" and as maximal $t$-ideals of $R$ that
intersect $D$ trivially are prime ideals of $R$ that intersect $D$
trivially, are not comparable with $XL[X],$ and hence are principal we need
to concentrate on maximal $t$-ideals $\mathcal{M}$ of $R$ that intersect $D$
non-trivially. But those are precisely $\mathcal{M=(M\cap }$ $D)+XL[X]$ and
as $\mathcal{M=M}_{t}\mathcal{=(M\cap }$ $D)_{t}~+XL[X]$ we have $\mathcal{%
(M\cap }$ $D)_{t}~=\mathcal{(M\cap }$ $D).~$Thus $\mathcal{M}=M+XL[X]$ where 
$M$ is a maximal $t$-ideal of $D.$ But, by the hypothesis, there is a
positive integer $n=n(M)$ such that $M^{n(M)}\subseteq \mathcal{\gamma }_{M}$
for some $\mathcal{\gamma }_{M}\in \Gamma _{I_{t}(D)}(P).$ This forces $%
\mathcal{M}^{n(M)}=M^{n(M)}+XL[X]\subseteq \mathcal{\gamma }_{M}+XL[X]$
which is a $t$-ideal of finite type and hence in $\Gamma _{I(R)}.$ For the
converse we take the same line as in the proof of (3) and note that for each
maximal $t$-ideal $M$ of $D,$ $\mathcal{M=}M+XL[X]$ is a maximal $t$-ideal
of $R$ and as $\mathcal{M}^{n}=(M+XL[X])^{n}\subseteq \mathcal{\gamma }_{%
\mathcal{M}}$ for some $\mathcal{\gamma }_{\mathcal{M}}\in \Gamma
_{I_{t}(R)}(P),$ $M^{n}\subseteq \mathcal{\gamma }_{\mathcal{M}}\cap D\neq
(0).$ Now, as in (3), $\mathcal{\gamma }_{\mathcal{M}}\cap D$ can be shown
to be a $t$-ideal of finite type and hence in $\Gamma _{I_{t}(D)}(P).$ For
(5) note that each maximal ($t$-) ideal of $R$ is of the form $m+XL[[X]]$
where $m$ is a maximal ($t$-) ideal of $D.$
\end{proof}

Apart from the examples constructed in the above proposition there are
examples of domains $I_{\ast }(D)\vartriangleleft ^{t}P$ for $P=$ "--- is a $%
\ast $-ideal of finite type". Some of these examples are simple and
straightforward and some are not so simple. Presented in the following is a
sampling of them. If $D$ is Noetherian and $P=$ "--- is a finitely generated
ideal, then $I(D)\vartriangleleft ^{t}P.$ Recall, again, that $D$ is a Mori
domain if it satisfies ACC on its integral divisorial ideals. Obviously
Noetherian domains are Mori and less obviously Krull domains are Mori.
Recall also that $D$ is Mori if and only if for every nonzero integral ideal 
$A$ of $D$ there is a finitely generated ideal $F\subseteq A$ such that $%
A_{v}=F_{v},$ if and only if every $t$-ideal of $D$ is a $t$-ideal of finite
type \cite{Zaf 1989}. Thus if $D$ is a Mori domain then $I_{t}(D)%
\vartriangleleft ^{t}P$ where $P=$ "--- is a $t$-ideal of finite type". Note
that since for a finitely generated nonzero ideal $A$ of any domain $%
A_{t}=A_{v}$, every $t$-ideal of a Mori domain is divisorial. In what
follows we shall also need the fact that if $I$ is a $\ast $-ideal for some
star operation $\ast $, then $\sqrt{I}$ is a $\ast _{s}$-ideal (see Theorem
1 of \cite{Zaf 2005}). Thus if $I$ is divisorial, or a $t$-ideal then $\sqrt{%
I}$ is a $t$-ideal.

\begin{proposition}
\label{Proposition A3}Let $D$ be a Mori domain. Then $I(D)\vartriangleleft
^{t}P$ with $P=$ "--- is a $t$-ideal" if and only if every maximal ideal of $%
D$ is divisorial.
\end{proposition}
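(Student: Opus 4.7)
My plan is to establish the two directions separately, with the converse essentially immediate and the forward direction resting entirely on the fact recalled just before the statement: for any star operation $\ast$, the radical of a $\ast$-ideal is a $\ast_s$-ideal, so in particular $\sqrt{I}$ is a $t$-ideal whenever $I$ is a $t$-ideal. I will also use the characterization of Mori domains stated in the paper: $D$ is Mori iff every $t$-ideal of $D$ is of finite type, and hence (since finite-type $t$-ideals are divisorial) every $t$-ideal of a Mori domain is divisorial.

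First I dispose of the converse. Assume every maximal ideal of $D$ is divisorial, hence a $t$-ideal. Given a proper nonzero ideal $A$ of $D$, pick a maximal ideal $M$ with $A \subseteq M$. Then already with $n = 1$ we have $A^n = A \subseteq M \in \Gamma_{I(D)}(P)$, so $I(D) \vartriangleleft^{t} P$. Note that this direction does not need the twist or the Mori hypothesis at all; it is the stronger untwisted statement $I(D) \vartriangleleft P$.

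For the forward implication, assume $I(D) \vartriangleleft^{t} P$ and fix an arbitrary maximal ideal $M$ of $D$. By hypothesis, there exist $n \geq 1$ and a proper $t$-ideal $\gamma$ with $M^n \subseteq \gamma$. Taking radicals yields
\[
M = \sqrt{M^n} \subseteq \sqrt{\gamma}.
\]
By the recalled radical fact, $\sqrt{\gamma}$ is a $t$-ideal; it is proper because $\gamma$ is. By maximality of $M$, the chain $M \subseteq \sqrt{\gamma} \subsetneq D$ collapses to $M = \sqrt{\gamma}$. Hence $M$ is a $t$-ideal, and since $D$ is Mori every $t$-ideal is divisorial, so $M$ is divisorial.

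The only step with any genuine content is the radical argument in the forward direction, and I expect this to be the sole obstacle to beware of: one must have the radical-of-a-$t$-ideal-is-a-$t$-ideal fact in hand, as this is precisely what allows the twist $M^n \subseteq \gamma$ to be unwound into the clean containment $M \subseteq \sqrt{\gamma}$ without leaving the class of $t$-ideals. The Mori hypothesis itself plays only a closing role, upgrading "$t$-ideal" to "divisorial".
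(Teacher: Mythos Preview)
Your proof is correct and follows essentially the same line as the paper's own argument: the forward direction uses the radical fact $M=\sqrt{M^n}\subseteq\sqrt{\gamma}$ together with maximality to conclude $M=\sqrt{\gamma}$ is a $t$-ideal, and then Mori upgrades this to divisorial; the converse is immediate. Your observation that the converse does not actually require the Mori hypothesis (since divisorial $\Rightarrow$ $t$-ideal holds in any domain, and $n=1$ already works) is a clean sharpening of the paper's phrasing, which routes that direction through Mori somewhat unnecessarily.
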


\begin{proof}
If every maximal ideal $M$ of $D$ is a $t$-ideal then, $D$ being Mori, $M$
is a $t$-ideal of finite type, hence divisorial and hence in $\Gamma
_{I(D)}(P)$. Whence $I(D)\vartriangleleft ^{t}P$. Conversely suppose that $D$
is Mori and $I(D)\vartriangleleft ^{t}P$ where $P$ is as given and let $M$
be a maximal ideal of $D.$ Then by the condition $M^{n}\subseteq A$ where $A$
is a $t$-ideal. This gives $M=\sqrt{M^{n}}\subseteq \sqrt{A}.$ Since $M$ is
maximal, we have $M=\sqrt{A}$ which is a $t$-ideal. Since $M$ is arbitrary
we have the result.
\end{proof}

The event of $I(D)\vartriangleleft ^{t}P$ for $P=$ "---- is a $t$-ideal of
finite type" does not put any constraint on the height of maximal ideals of
a Mori domain. Indeed there do exist examples of Noetherian domains with
maximal $t$-ideals of height greater than one, see e.g. \cite[Example 3.5]%
{FZ 2019}.

\begin{corollary}
\label{Corollary A4}Let $D$ be a Noetherian integral domain. Then $%
I(D)\vartriangleleft ^{t}P$ with $P=$ "---- is a $t$-ideal of finite type"
if and only if every maximal ideal of $D$ is divisorial.
\end{corollary}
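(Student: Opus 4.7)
The plan is to reduce Corollary \ref{Corollary A4} directly to Proposition \ref{Proposition A3}. Since every Noetherian domain trivially satisfies ACC on integral divisorial ideals (it satisfies ACC on all integral ideals), a Noetherian domain is in particular a Mori domain, so the hypotheses of Proposition \ref{Proposition A3} are available.

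The one mismatch to resolve is that Proposition \ref{Proposition A3} uses the predicate ``is a $t$-ideal'' while Corollary \ref{Corollary A4} uses ``is a $t$-ideal of finite type.'' However, as recalled in the paragraph immediately preceding Proposition \ref{Proposition A3}, in a Mori domain every $t$-ideal is of finite type (equivalently, divisorial). Consequently, when $D$ is Mori (and in particular when $D$ is Noetherian), the two predicates determine exactly the same truth set $\Gamma_{I(D)}(P)$. Thus the condition $I(D)\vartriangleleft^{t}P$ for $P=$ ``is a $t$-ideal of finite type'' is literally the same statement as $I(D)\vartriangleleft^{t}P$ for $P=$ ``is a $t$-ideal'' in this setting.

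With that identification, Proposition \ref{Proposition A3} applied to the Mori domain $D$ immediately yields: $I(D)\vartriangleleft^{t}P$ (with $P$ either version) if and only if every maximal ideal of $D$ is a $t$-ideal, which, again using that every $t$-ideal in a Mori domain is divisorial (of finite type), is equivalent to every maximal ideal being divisorial. No step should be genuinely hard here; the only subtlety worth flagging is the identification of the two truth sets, which is the reason the corollary can be stated as a ``direct corollary'' rather than requiring its own argument.
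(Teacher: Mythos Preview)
Your proposal is correct and matches the paper's intended argument: the paper states Corollary \ref{Corollary A4} without a separate proof, as an immediate specialization of Proposition \ref{Proposition A3}, relying on exactly the two facts you invoke (Noetherian $\Rightarrow$ Mori, and in a Mori domain every $t$-ideal is of finite type, so the two predicates coincide). There is nothing to add.
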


Indeed as in a polynomial ring over $D\neq K$, every maximal ideal being a
radical of a $t$-ideal of any kind is not possible because that would make
every maximal ideal of the polynomial ring a $t$-ideal as we have seen in
section 2. On the other hand, we have the following statement.

\begin{proposition}
\label{Proposition A4}Let $R=D[X]$. If $P=$ "---- is a $t$-invertible $t$%
-ideal (resp., divisorial ideal)" Then $I_{t}(D)\vartriangleleft
^{t}P\Rightarrow I_{t}(R)\vartriangleleft ^{t}P$ and if $D$ is integrally
closed, $I_{t}(R)\vartriangleleft ^{t}P\Rightarrow I_{t}(D)\vartriangleleft
^{t}P$.
\end{proposition}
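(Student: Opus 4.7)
The plan is to mirror the (untwisted) proof of Proposition~\ref{Proposition UXA}(2) using the twisted reduction-to-maximal-$t$-ideals that underlies Proposition~\ref{Proposition A2}(2): since every proper $t$-ideal of a domain sits inside some maximal $t$-ideal, $I_{t}(E)\vartriangleleft^{t}P$ is equivalent to the condition that every maximal $t$-ideal $\mathcal{M}$ of $E$ satisfies $\mathcal{M}^{n(\mathcal{M})}\subseteq \gamma_{\mathcal{M}}$ for some $\gamma_{\mathcal{M}}\in \Gamma_{I_{t}(E)}(P)$. Applied to $E=R=D[X]$, this lets me exploit the standard dichotomy for maximal $t$-ideals $\mathcal{M}$ of $R$: either $\mathcal{M}\cap D=(0)$, or $\mathcal{M}=(\mathcal{M}\cap D)[X]$ with $\mathcal{M}\cap D$ a maximal $t$-ideal of $D$ (Proposition~1.1 of \cite{HZ 1989}).

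For the forward direction, assume $I_{t}(D)\vartriangleleft^{t}P$ and fix a maximal $t$-ideal $\mathcal{M}$ of $R$. If $\mathcal{M}\cap D=(0)$, then by Theorem~1.4 of \cite{HZ 1989} the ideal $\mathcal{M}$ is itself a $t$-invertible $t$-ideal (hence divisorial), so $\mathcal{M}^{1}\subseteq \mathcal{M}\in \Gamma_{I_{t}(R)}(P)$. Otherwise set $M=\mathcal{M}\cap D$, a maximal $t$-ideal of $D$; the hypothesis produces $n$ and $\gamma \in \Gamma_{I_{t}(D)}(P)$ with $M^{n}\subseteq \gamma$, whence $\mathcal{M}^{n}=M^{n}[X]\subseteq \gamma[X]$. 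Proposition~4.3 of \cite{HH 1980} guarantees that $\gamma[X]$ is divisorial (resp.\ a $t$-ideal) in $R$ whenever $\gamma$ is so in $D$, settling the divisorial case. For the $t$-invertible case, I additionally use $\gamma^{-1}[X]\subseteq (\gamma[X])^{-1}$ together with $(AB)[X]=A[X]B[X]$ and Proposition~4.3 of \cite{HH 1980} applied to the equality $(\gamma\gamma^{-1})_{t}=D$, yielding $(\gamma[X]\cdot(\gamma[X])^{-1})_{t}=R$.

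For the converse, assume $D$ is integrally closed and $I_{t}(R)\vartriangleleft^{t}P$. Given a maximal $t$-ideal $\wp$ of $D$, Proposition~1.1 of \cite{HZ 1989} makes $\wp[X]$ a maximal $t$-ideal of $R$, so the hypothesis produces $n$ and $\mathcal{A}\in \Gamma_{I_{t}(R)}(P)$ with $\wp^{n}[X]=(\wp[X])^{n}\subseteq \mathcal{A}$. The task reduces to descending $\mathcal{A}$ to an ideal of $D$ containing $\wp^{n}$ that still satisfies $P$. Since $\mathcal{A}\cap D\supseteq \wp^{n}\neq (0)$, the integrally closed hypothesis---through the Gauss-type equality $(c(fg))_{v}=(c(f)c(g))_{v}$ for $f,g\in R$---forces $\mathcal{A}=(\mathcal{A}\cap D)[X]$; then Proposition~4.3 of \cite{HH 1980} handles the divisorial case, and the inverse formula $((\mathcal{A}\cap D)[X])^{-1}=(\mathcal{A}\cap D)^{-1}[X]$, combined once more with Proposition~4.3, handles the $t$-invertible case, so $\mathcal{A}\cap D\in \Gamma_{I_{t}(D)}(P)$ is the desired witness containing $\wp^{n}$.

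The main obstacle is the descent step in the converse: establishing $\mathcal{A}=(\mathcal{A}\cap D)[X]$ for a $t$-ideal $\mathcal{A}$ of $R$ with nontrivial contraction when $D$ is integrally closed, together with the parallel transfer of $t$-invertibility. Both rest on the control that Gauss's Lemma for the $v$-operation in an integrally closed base exerts over how the $v$- and $t$-operations on $R$ are computed coefficient-wise from those on $D$; without this assumption $\mathcal{A}$ can genuinely fail to be extended from $\mathcal{A}\cap D$, which is precisely why the converse is asserted only in the integrally closed case. Everything else is routine bookkeeping with the maximal $t$-ideal dichotomy and Proposition~4.3 of \cite{HH 1980}.
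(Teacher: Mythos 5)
Your proposal is correct and follows essentially the same route as the paper: the same dichotomy for maximal $t$-ideals of $D[X]$ (Proposition 1.1 and Theorem 1.4 of \cite{HZ 1989}), extension $\gamma\mapsto\gamma[X]$ for the forward direction, and descent $\mathcal{A}\mapsto\mathcal{A}\cap D$ in the converse using integral closedness. The only cosmetic difference is that where you sketch the Gauss-type content argument for $\mathcal{A}=(\mathcal{A}\cap D)[X]$ and the transfer of the property, the paper simply cites \cite[Corollary 3.1]{AKZ 1995} for the same fact.
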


\begin{proof}
(a). Let $M$ be a maximal $t$-ideal of $D[X]$ and suppose that $M\cap D\neq
(0).$ Then $M=\wp \lbrack X]$ where $\wp =M\cap D$ is a maximal $t$-ideal of 
$D$ \cite{HZ 1989}. Since $I_{t}(D)\vartriangleleft ^{t}P$ we conclude that
for some $n=n(\wp )\geq 1,$ $\wp ^{n}$ is contained in a $t$-invertible $t$%
-ideal (resp. $t$-ideal, divisorial ideal) $A$. But then, $M^{n}=\wp
^{n}[X]\subseteq A[X].$ Next let $M$ be a maximal $t$-ideal of $D[X]$ such
that $M\cap D=(0).$ Then $M$ is a $t$-invertible $t$-ideal and hence
divisorial by Theorem 1.4 of \cite{HZ 1989} and $M^{n}\subseteq M$ for all $%
n.$ Next suppose that $I_{t}(R)\vartriangleleft ^{t}P$ for the specified $P.$
Then, in particular, for every maximal $t$-ideal $\wp $ of $D$ we have the
maximal $t$-ideal $M=\wp \lbrack X]$ and, by the condition, there is $%
n=n(M)\geq 1$ such that $M^{n}$ is contained in a $t$-ideal (resp., $t$%
-invertible $t$-ideal, divisorial ideal) $A$ of $D[X].$ Since $M^{n}\cap
D\neq (0),$ $A\cap D\neq (0)$ and since $D$ is integrally closed $A=(A\cap
D)[X]$ and $A\cap D$ is a $t$-ideal (resp., $t$-invertible $t$-ideal,
divisorial ideal), if $A$ is \cite[Corollary 3.1]{AKZ 1995}.
\end{proof}

Indeed as the behavior of $D+XL[X]$ is the same under $S(D)\vartriangleleft
^{t}P$ as it was under $S(D)\vartriangleleft P$, one can construct examples
to show that if $R$ is a ring of fractions of $D,$ $S(D)\vartriangleleft
^{t}P$ may not imply $S(R)\vartriangleleft ^{t}P$ in general. This leaves us
to check what happens if we restrict a domain to be completely integrally
closed and satisfy $S(D)\vartriangleleft ^{t}P$ for a suitable $P.$ To
appreciate the following proposition we need to have an idea of the divisor
class group of a Krull domain being torsion. For this too the reference to
go to is \cite{Fos 1973}. For our purposes the divisor class group being
torsion means that for each proper divisorial ideal $I$ there is some
positive integer $n$ such that $(I^{n})_{v}$ is principal. The other concept
to know is the local class group $G(D)=Cl(D)/Pic(D)$ of a Krull domain $D,$
introduced and studied by Bouvier in \cite{Bou 1983}. Now $G(D)$ being
torsion is equivalent to $(I^{n})_{v}$ being invertible, for some integer $%
n\geq 1,$ for each proper divisorial ideal $I.$

\begin{proposition}
\label{Proposition A5}(a) Let $D$ be a completely integrally closed domain.
Then (1) $D$ is a Krull domain if and only if $I_{t}(D)\vartriangleleft
^{t}P $ for $P=\,$"--- is a proper divisorial ideal", (2) $D$ is a Krull
domain if and only if $I_{t}(D)\vartriangleleft ^{t}P$ for $P=\,$"--- is a
proper $t$-invertible $t$-ideal", (3) $D$ is a Krull domain, with torsion
divisor class group, if and only if $I_{t}(D)\vartriangleleft ^{t}P$ for $%
P=\,$"--- is a proper principal ideal"$.$ (b) Let $D$ be an intersection of
rank one valuation domains. Then (4) $D$ is a Krull domain, if and only if $%
I_{t}(D)\vartriangleleft ^{t}P$ for $P=\,$"--- a proper $v$-ideal of finite
type" and (5) $D$ is a Krull domain, with torsion local class group, if and
only if $I_{t}(D)\vartriangleleft ^{t}P$ for $P=\,$"--- a proper invertible
ideal". (c) Let $D$ be completely integrally closed. Then (6) $D$ is a
Dedekind domain if and only if $I(D)\vartriangleleft ^{t}P$ for $P=\,$"---
is a proper divisorial ideal" (resp. invertible ideal) and (7) $D$ is a
Dedekind domain with torsion class group if and only if $I(D)%
\vartriangleleft ^{t}P$ for $P=\,$"--- is a proper principal ideal".
\end{proposition}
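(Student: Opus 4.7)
The plan is to deduce all seven parts from one central lemma: under the hypothesis of the proposition, if $M$ is a maximal $t$-ideal (in parts (1)--(5)) or a maximal ideal (in (6)--(7)) of $D$ and $M^{n}\subseteq A$ for some proper divisorial ideal $A$, then $M$ itself is divisorial. To prove the lemma, suppose for contradiction that $M\subsetneq M_{v}$. Since $M_{v}$ is divisorial and therefore a $t$-ideal, maximality of $M$ (as a $t$-ideal or as an ideal, as appropriate) forces $M_{v}=D$. Applying the identity $(IJ)_{v}=(I_{v}J_{v})_{v}$ recorded at the beginning of Section~\ref{Section S2} inductively yields $(M^{n})_{v}=((M_{v})^{n})_{v}=D$, while $M^{n}\subseteq A$ gives $(M^{n})_{v}\subseteq A_{v}=A$, so $A=D$, against the choice of $A$.

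With the lemma in hand, (1) follows because every maximal $t$-ideal of $D$ is then divisorial, so $D$ is an H-domain in the sense of \cite{GV 1977}; a completely integrally closed H-domain is Krull. Part (2) reduces to (1) because a $t$-invertible $t$-ideal is automatically divisorial, and (4) reduces to (1) because an intersection of rank one valuation domains is completely integrally closed and a $v$-ideal of finite type is divisorial. For (6), the maximal-ideal version of the lemma shows that every maximal ideal of $D$ is divisorial and hence a $t$-ideal; thus maximal ideals coincide with maximal $t$-ideals, (1) gives $D$ Krull, and since maximal $t$-ideals of a Krull domain have height one, $D$ is one-dimensional and therefore Dedekind. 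The invertible variant of (6) is then a special case. The converses in (1), (2), (4), (6) are immediate because in a Krull (resp.\ Dedekind) domain every maximal $t$-ideal (resp.\ maximal ideal) belongs to the prescribed class and trivially contains itself.

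For the torsion-class-group statements (3), (5), (7) I would first use (1) or (6) to obtain that $D$ is Krull (resp.\ Dedekind). Then for each maximal $t$-ideal $M$, the hypothesis supplies a proper principal (resp.\ invertible) ideal $\mathcal{\gamma}$ with $(M^{n})_{v}\subseteq \mathcal{\gamma}$. In a Krull domain the monoid of proper divisorial ideals under $v$-multiplication is free abelian on the maximal $t$-ideals, so the divisor of $(M^{n})_{v}$ is $n[M]$ and the containment forces $\mathcal{\gamma}=(M^{k})_{v}$ for some $1\le k\le n$; hence $[M]$ is torsion in $Cl(D)$ (resp.\ in the local class group $G(D)$), and since these groups are generated by the classes of the maximal $t$-ideals ($=$ height-one primes), they are torsion. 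Conversely, from a torsion (local) class group, every proper $t$-ideal $A=(P_{1}^{a_{1}}\cdots P_{r}^{a_{r}})_{t}$ admits an integer $N$ making each $(P_{i}^{Na_{i}})_{v}$ principal (resp.\ invertible), so $(A^{N})_{v}$ is a proper principal (resp.\ invertible) ideal containing $A^{N}$. The main obstacle is the central lemma itself, which replaces the radical trick of Proposition~\ref{Proposition A3} (that only produces a $t$-ideal) by a $v$-closure argument; complete integral closure is invoked only at the end, to pass from H-domain to Krull via \cite{GV 1977}.
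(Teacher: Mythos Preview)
Your central lemma and the deductions of (1), (2), (4), (6) from it are exactly the paper's argument: show $M_{v}=D$ leads to $(M^{n})_{v}=D\subseteq A$, contradict properness of $A$, conclude $D$ is an H-domain, and invoke \cite{GV 1977}.

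For the torsion parts (3), (5), (7) you diverge slightly. The paper observes that $M^{n}\subseteq \gamma\subseteq M$ (the second inclusion because, once $D$ is Krull, $M$ is the only maximal $t$-ideal containing $M^{n}$), so $M=\sqrt{\gamma}$ is the radical of a principal (resp.\ invertible) ideal, and then quotes Anderson's criteria \cite[Theorems 3.2, 3.3]{And 1982} to conclude torsion of $Cl(D)$ (resp.\ $G(D)$). You instead exploit the free-abelian structure of the divisor monoid to force $\gamma=(M^{k})_{v}$ and read off torsion of $[M]$ directly. Both are correct; your route is more self-contained (no appeal to \cite{And 1982}) but uses the divisor theory of Krull domains a bit more explicitly, while the paper's route packages that computation into a citable characterization. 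For the converses in (3), (5), (7) the paper checks only maximal $t$-ideals (which suffices by the analogue of Proposition~\ref{Proposition A2}), whereas you verify the condition for all proper $t$-ideals; either is fine.
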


\begin{proof}
(1). Let $D$ be a completely integrally closed domain and let $%
I_{t}(D)\vartriangleleft ^{t}P$ for $P=\,$"--- is a proper divisorial
ideal". (I.e. suppose that for every $t$-ideal $I$ there is $n\geq 1$ such
that $I^{n}$ is contained in a divisorial ideal.) Now let $M$ be a maximal $%
t $-ideal of $D.$ We claim that $M$ is divisorial, for if not then $M_{v}=D.$
But, by the condition, $M^{n}$ is contained in a proper divisorial ideal $%
\mathcal{\gamma }.$ Thus $(M^{n})_{v}\subseteq \mathcal{\gamma }$ because $%
\mathcal{\gamma }$ is a divisorial ideal. On the other hand $%
(M^{n})_{v}=((M_{v})^{n})_{v}=D,$ contradicting the assumption that $%
\mathcal{\gamma }$ is a proper divisorial ideal. Whence $M_{v}\neq D,$
forcing $M=M_{v}.$ Now as $M$ is arbitrary, we conclude that $D$ is an H
domain \cite{HZ 1988}. Finally, according to \cite{GV 1977}, $D$ is Krull.
Conversely if $M$ is a maximal $t$-ideal of a Krull domain then $M$ is
divisorial and so is $(M^{n})_{v}$ which returns $T$ for $P$ for any $n$.
(2). Because a proper $t$-invertible $t$-ideal is divisorial too and because
every prime $t$-ideal of a Krull domain is $t$-invertible and so must be
every maximal $t$-ideal $M,$ with $(M^{n})_{v}$ a $t$-invertible $t$-ideal,
we conclude that the proof of (1) applies. (3). For sufficiency, note that a
proper principal ideal is divisorial. So $D$ is at least a Krull domain, by
part (1). Now let $M$ be a maximal $t$-ideal of $D.$ Then, by the condition, 
$M^{n}$ is contained in a proper nonzero principal ideal $\mathcal{\gamma }$
and clearly $M^{n}\subseteq \mathcal{\gamma }\subseteq M.$ Thus $M$ is the
radical of a principal ideal and Theorem 3.2 of \cite{And 1982} applies to
give the conclusion that the divisor class group of $D$ is torsion.
Conversely if $D$ is a Krull domain whose divisor class group is torsion,
then via Theorem 3.2 of \cite{And 1982} (or via \cite[Proposition 6.8]{Fos
1973}) one finds that for each maximal $t$-ideal $M$ we have $(M^{n})_{v}=%
\mathcal{\gamma }$ a principal ideal verifying that $M^{n}$ is contained in
a proper principal ideal for each maximal $t$-ideal $M$ of $D.$ Note in part
(b) that $D$ being completely integrally closed is provided by the given.
Then (4) can be proved just like (1) and that leaves (5). Now in (5) we
prove just like (3) that $D$ is a Krull domain and then use the condition to
show that $M$ is the radical of an invertible ideal. This would give, via
Theorem 3.3 of \cite{And 1982} the conclusion that $G(D)$ is torsion. For
necessity in this case we appeal to Theorem 3.3 of \cite{And 1982} to
conclude that $I_{t}(D)\vartriangleleft ^{t}P.$ For (6) and (7) note that
every maximal $t$-ideal is maximal, and divisorial, because every maximal
ideal is divisorial. So, in each cas\text{e, }$D$ is a one dimensional Krull
domain and hence a Dedekind domain. Now in case of (7) we can conclude, as
in the proof of (3), that every maximal ideal is the radical of a principal
ideal. The converse in each case is obvious, if not dealt with.
\end{proof}

For a star operation $\ast $ of finite type, defined on $D,$ call $D$ of
finite $\ast $-character if every nonzero non unit of $D$ belongs to at most
a finite number of maximal $\ast $-ideals of $D.$ We shall be mostly
concerned with $\ast =t$ or $d$ though some of the considerations here may
apply to the general approach. In any case we may define $\ast $-dimension
as the supremum of the lengths of chains of $\ast $-ideals that are prime.
Call $D$ a weakly Krull domain (WKD) if $D=\cap _{P\in X^{1}(D)}D_{P}$ and
the intersection is locally finite. It turns out that $D$ is of finite $t$%
-character and of $t$-dimension one \cite{AMZ 1992}. We shall also need to
use the $nth$ symbolic power $Q^{(n)}$of a prime $Q$ defined by $%
Q^{(n)}=Q^{n}D_{Q}\cap D$. We shall need also to recall that a nonzero
finitely generated ideal $I$ is said to be rigid ($t$-rigid) if $I$ is
contained in a unique maximal ($t$-) ideal. A maximal ($t$-) ideal is said
to be ($t$-) potent if it contains a ($t$-) rigid ideal. Finally a domain $D$
is said to be ($t$-) potent if each of its maximal ($t$-) ideals is $(t$-)
potent.

\begin{proposition}
\label{Proposition B}(1)Let $I(D)\vartriangleleft ^{t}P$ where $P=$ "--- is
a proper nonzero principal ideal" (resp. invertible ideal, $t$-invertible $t$%
-ideal) $.$ If $D$ has $t$-ACC, then $D$ is a $t$-potent domain whose
maximal ideals $M$ are divisorial such that $\cap (M^{n})_{v}=(0)$. (2) Let $%
I_{t}(D)\vartriangleleft ^{t}P$ where $P=$ "--- is a proper nonzero
principal ideal" (resp. invertible ideal, $t$-invertible $t$-ideal) $.$ If $%
D $ has $t$-ACC, then $D$ is a $t$-potent domain whose maximal $t$-ideals $M$
are divisorial such that $\cap (M^{n})_{v}=(0).$
\end{proposition}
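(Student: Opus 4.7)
The plan is to handle the three variants of $P$ uniformly. Since principal $\Rightarrow$ invertible $\Rightarrow$ $t$-invertible $t$-ideal, in all three cases the $\gamma$ supplied by $\vartriangleleft^{t}P$ is a proper $t$-invertible $t$-ideal, and so is divisorial of finite type; write $\gamma = F_{v}$ with $F$ finitely generated. Then both (1) and (2) follow from a single template, applied to maximal ideals and maximal $t$-ideals respectively.

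For part (2), fix a maximal $t$-ideal $M$ and pick $n = n(M)$ with $M^{n} \subseteq \gamma$. First I would note that $\gamma \subseteq M$: being proper, $\gamma$ lies in some maximal $t$-ideal $M'$, and $M^{n} \subseteq M'$ together with primality of $M'$ forces $M \subseteq M'$, whence $M' = M$ by maximality of $M$ among $t$-ideals. Next, to show $M$ is divisorial, I would use the standard dichotomy that a maximal $t$-ideal $M$ either satisfies $M_{v} = M$ or $M_{v} = D$ (equivalently $M^{-1} = D$). If $M_{v} = D$, then iterating $(M \cdot A)_{v} = (M \cdot A_{v})_{v}$ starting from $A = M$ gives $(M^{n})_{v} = D$, contradicting $(M^{n})_{v} \subseteq \gamma_{v} = \gamma \subsetneq D$; hence $M_{v} = M$.

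For $\bigcap_{k} (M^{k})_{v} = (0)$, I would apply Lemma \ref{Lemma UX2} to the $t$-invertible $t$-ideal $\gamma$: under $t$-ACC, $\bigcap_{m} (\gamma^{m})_{t} = (0)$. For each $m$, $M^{nm} \subseteq \gamma^{m}$ yields $(M^{nm})_{v} \subseteq (\gamma^{m})_{v} = (\gamma^{m})_{t}$, where the last equality holds because $\gamma^{m}$ is $t$-invertible, hence a $v$-ideal of finite type. Any $x \in \bigcap_{k} (M^{k})_{v}$ then lies in $(M^{nm})_{v}$ for every $m$, and so in $(0)$. For $t$-potency, the same $F$ works: any maximal $t$-ideal $N$ with $F \subseteq N$ satisfies $N = N_{t} \supseteq F_{t} = F_{v} = \gamma$, and by the first step $N = M$; thus $F$ is a $t$-rigid ideal inside $M$, so $D$ is $t$-potent.

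For part (1) the same template runs with ``maximal ideal'' in place of ``maximal $t$-ideal'': if $M$ is a maximal ideal with $M_{v} \ne M$, then $M \subsetneq M_{v}$ forces $M_{v} = D$ by maximality, and the inductive contradiction $(M^{n})_{v} = D$ recurs. Once $M$ is shown to be divisorial, it is automatically a $t$-ideal (since $A_{t} \subseteq A_{v}$ always) and hence a maximal $t$-ideal, after which the intersection and $t$-rigidity arguments transfer verbatim. I expect the only delicate step to be the divisoriality argument: that is where one really uses the properness of $\gamma$ to prevent $(M^{n})_{v}$ from collapsing to $D$, and everything else is routine manipulation combined with Lemma \ref{Lemma UX2}.
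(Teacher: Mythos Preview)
Your proposal is correct and follows essentially the same approach as the paper: divisoriality of $M$ via the dichotomy $M_{v}=M$ or $M_{v}=D$ (the paper defers this step to the proof of Proposition~\ref{Proposition A5}), the intersection $\bigcap (M^{n})_{v}=(0)$ via Lemma~\ref{Lemma UX2} applied to $\gamma$, and $t$-potency via the finitely generated $F$ with $F_{v}=\gamma$. Your write-up is in fact more explicit than the paper's, which asserts ``this shows also that $M$ is $t$-potent'' without spelling out the $t$-rigidity of $F$ or the containment $\gamma\subseteq M$; you supply both.
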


\begin{proof}
For (1) let $I(D)\vartriangleleft ^{t}P$ where $P=$ "--- is a proper nonzero
principal ideal" (resp. invertible ideal, $t$-invertible $t$-ideal) and
suppose that $D$ has $t$-ACC. As we concluded in the proof of Proposition %
\ref{Proposition A5}, every maximal ideal $M$ is divisorial. Next, for every
maximal ideal $M$ we have $M^{n}\subseteq \mathcal{\gamma }\in \Gamma
_{I(D)}(P).$ This shows also that $M$ is $t$-potent. Next $%
(M^{n})_{v}\subseteq \mathcal{\gamma },$ because $\mathcal{\gamma }$ is
divisorial. So $\cap (M^{nr})_{v}\subseteq \cap (\mathcal{\gamma }^{n})_{v}.$
Since $\mathcal{\gamma }$ is a $t$-invetible $t$-ideal and since $D$ is $t$%
-ACC, Lemma \ref{Lemma UX2} applies to give $\cap (\mathcal{\gamma }%
^{n})_{v}=(0).$ Whence $\cap (M^{n})_{v}=(0).$ For (2) note that $%
I_{t}(D)\vartriangleleft ^{t}P$ implies that $M^{n}\subseteq \mathcal{\gamma 
}\in \Gamma _{I_{t}(D)}(P)$ for each maximal $t$-ideal $M$. Since $\mathcal{%
\gamma }$ is divisorial, $M$ must be. The rest of the proof follows the same
lines as taken in the proof of (1).
\end{proof}

The above result does not give much. But with some give and take it can.

\begin{proposition}
\label{Proposition B1} (a) Let $I(D)\vartriangleleft ^{t}P$ where $P=$ "---
is a proper nonzero principal ideal" and suppose that $D$ has $t$-ACC. Then
the following are equivalent: (1) $D$ is one dimensional, (2) for every
maximal ideal $M,$ $M^{n}$ being contained in a principal ideal $dD$ implies 
$Q^{(n)}\subseteq dD$ for every nonzero prime $Q$ contained in $M$, (3) $D$
is a one dimensional WKD and (4) every power of every nonzero prime ideal $Q$
of $D$ is a primary ideal. (b) Let $I_{t}(D)\vartriangleleft ^{t}P$ where $%
P= $ "--- is a proper nonzero principal ideal" and suppose that $D$ has $t$%
-ACC. Then the following are equivalent: (1) $D$ has $t$-dimension one, (2)
for every maximal $t$-ideal $M,$ $M^{n}$ being contained in a principal
ideal $dD$ implies $Q^{(n)}\subseteq dD$ for every nonzero prime $Q$
contained in $M$, (3) $D$ is a WKD.
\end{proposition}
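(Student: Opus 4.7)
The plan is to build on Proposition \ref{Proposition B}, which under both sets of hypotheses already guarantees that each maximal (respectively maximal $t$-) ideal $M$ is divisorial and $t$-potent with $\bigcap_{n}(M^{n})_{v}=(0)$. The remaining work is to upgrade this local picture to the global dimension/WKD statement, combined with standard primary-decomposition bookkeeping.

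For part (a) I would establish the cycle (1)$\Rightarrow$(4)$\Rightarrow$(2)$\Rightarrow$(1) and treat (1)$\Leftrightarrow$(3) separately. For (1)$\Rightarrow$(4), one-dimensionality forces every nonzero prime $Q$ to be maximal, so $\sqrt{Q^{n}}=Q$ and the classical argument gives that $Q^{n}$ is $Q$-primary. For (4)$\Rightarrow$(2), note that (4) yields $Q^{(n)}=Q^{n}$ for every nonzero prime $Q$, whence $Q\subseteq M$ together with $M^{n}\subseteq dD$ immediately gives $Q^{(n)}=Q^{n}\subseteq M^{n}\subseteq dD$. For (2)$\Rightarrow$(1), I would assume toward contradiction a chain $(0)\subsetneq Q\subsetneq M$; picking an integer $n$ and nonzero $d$ with $M^{n}\subseteq dD$ via $I(D)\vartriangleleft^{t}P$, condition (2) delivers $Q^{(n)}\subseteq dD$. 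Iterating this along successive powers of $M$ (each with its own principal container) and invoking $\bigcap_{n}(M^{n})_{v}=(0)$ from Proposition \ref{Proposition B}, any nonzero $q\in Q$ would land in arbitrarily deep $v$-closures of principal ideals, contradicting the $t$-ACC via Lemma \ref{Lemma UX2}. Finally (1)$\Rightarrow$(3) combines $t$-potency from Proposition \ref{Proposition B} with one-dimensionality: each maximal ideal $M$ contains a rigid finitely generated ideal, forcing finite $t$-character along the lines of \cite{AMZ 1992}, so $D=\bigcap_{P\in X^{1}(D)}D_{P}$ with the intersection locally finite, i.e.\ $D$ is a one-dimensional WKD; (3)$\Rightarrow$(1) is built into the statement of (3).

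Part (b) is entirely parallel: replace ``maximal ideal'' by ``maximal $t$-ideal'', ``dimension'' by ``$t$-dimension'', and use the $t$-operation half of Proposition \ref{Proposition B}. The WKD conclusion now absorbs any separate analogue of (4), because for a WKD the $t$-dimension-one and finite $t$-character conditions are built in, and the symbolic-power condition (2) follows from WKD by localising at each height-one $t$-prime. The main obstacle I anticipate is the implication (2)$\Rightarrow$(1): turning a symbolic-power containment into a genuine dimension-theoretic contradiction requires delicate bookkeeping between the localisations $D_{Q}$ and $D_{M}$, the contraction back to $D$, and the $t$-ACC together with $\bigcap_{n}(M^{n})_{v}=(0)$. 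The other implications are largely routine once the Proposition \ref{Proposition B} skeleton is in place.
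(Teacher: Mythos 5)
Your overall architecture is sound and close to the paper's (which runs the cycle (1)$\Rightarrow$(2)$\Rightarrow$(3)$\Rightarrow$(4)$\Rightarrow$(1), using Proposition \ref{Proposition B} for $t$-potency and \cite[Theorem 5.3]{HZ 2019} for ``$t$-potent of $t$-dimension one $\Rightarrow$ WKD''), and your steps (1)$\Rightarrow$(4) and (4)$\Rightarrow$(2) are correct. But the one implication you yourself flag as the obstacle, (2)$\Rightarrow$(1), is exactly where your sketch breaks down, and the mechanism you propose for it does not work. You want to conclude that ``any nonzero $q\in Q$ lands in arbitrarily deep $v$-closures of principal ideals'' and then invoke $\bigcap_n(M^n)_v=(0)$ via Lemma \ref{Lemma UX2}. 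The problem is that a fixed nonzero $q\in Q$ need not lie in $Q^{(n)}$ for $n\geq 2$ (recall $Q^{(n)}=Q^nD_Q\cap D\subseteq Q$, usually strictly), so condition (2) tells you nothing about $q$ itself; and the containments $Q^{(n)}\subseteq d_nD$ for varying $n$ do not assemble into $Q\subseteq\bigcap_n(M^n)_v$ or into any descending family whose intersection is known to vanish. Nothing in your sketch produces the needed contradiction.

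The correct argument is local to a single $n$ and a single $d$, and hinges on the $Q$-primariness of the symbolic power rather than on $\bigcap(M^n)_v=(0)$. Suppose $(0)\subsetneq Q\subsetneq M$ with $M$ maximal, and choose $n$ and a non-unit $d$ with $M^n\subseteq dD$. First, $d\notin Q$: otherwise $M^n\subseteq dD\subseteq Q$ would force $M\subseteq Q$. Now take $0\neq x\in Q^{(n)}$. By (2), $x\in dD$, say $x=d\cdot(x/d)$ with $x/d\in D$; since $Q^{(n)}$ is $Q$-primary and $d\notin Q=\sqrt{Q^{(n)}}$, we get $x/d\in Q^{(n)}$. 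Iterating with the \emph{same} $d$ yields $x/d^k\in Q^{(n)}$ for all $k$, hence a strictly ascending chain $xD\subsetneq(x/d)D\subsetneq(x/d^2)D\subsetneq\cdots$ of principal (so $t$-invertible $t$-) ideals, contradicting $t$-ACC. This is the paper's proof of its implication (2)$\Rightarrow$(3), and the same device handles (4)$\Rightarrow$(1) there (using $Q^{(n)}=Q^n$) and the corresponding step in part (b). With this replacement your cycle closes; your (1)$\Leftrightarrow$(3) step should cite \cite[Theorem 5.3]{HZ 2019} explicitly rather than gesturing at \cite{AMZ 1992}, since potency alone is what Proposition \ref{Proposition B} supplies and that theorem is what converts potency plus ($t$-)dimension one into the WKD conclusion.
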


\begin{proof}
(a) That (1) $\Rightarrow $ (2) is clear. For (2) $\Rightarrow $ (3), we
show that $D$ is one dimensional. Assume by way of contradiction that there
is a nonzero non-maximal prime $Q$ contained in a maximal ideal $M.$ Let $%
M^{n}\subseteq dD$ for a non unit $d\in D$ and let $0\neq x\in Q^{(n)}.$
Then $x\in dD.$ Since $d\notin Q,$ $(x/d)d\in Q^{(n)}$ forces $x/d\in
Q^{(n)}.$ Repeating the argument over and over again we get $\frac{x}{d}%
D\subseteq \frac{x}{d^{2}}D\subseteq \frac{x}{d^{3}}D\subseteq ...\subseteq 
\frac{x}{d^{n}}D\subseteq \frac{x}{d^{n+1}}D\subseteq ...$ which is
impossible in the presence of $t$-ACC. Thus $D$ is one dimensional and hence
of $t$-dimension one. Now a $t$-potent domain of $t$-dimension one is a WKD
by \cite[Theorem 5.3]{HZ 2019}. That (3) $\Rightarrow $ (4), is direct
because $D$ is one dimensional. For (4) $\Rightarrow $ (1), suppose that
there is a nonzero non-maximal prime ideal $Q$ and proceed as in the proof
of (2) $\Rightarrow $ (3) to get the desired contradiction. For the proof of
(b) note that (1) $\Rightarrow $ (2) is obvious and (2) $\Rightarrow $ (3)
goes exactly along the lines taken in the proof of (2) $\Rightarrow $ (3) of
(a), while (3) $\Rightarrow $ (1) is obvious too.
\end{proof}

Lest a reader considers Proposition \ref{Proposition B1} an empty result we
hasten to give examples to allay such feelings. For the following set of
examples we need to know that an extension of domains $A\subseteq B$ is
called a root extension if for each $b\in B$ there is a positive integer $%
n=n(b)$ such that $b^{n}\in A.$ Let's call $A\subseteq B$ a fixed root
extension if there is a fixed positive integer $n$ such that $b^{n}\in A,$
for all $b\in B.$ Also an integral domain $D$ is called an Almost Principal
Ideal (API-)domain if for each subset $\{a_{\alpha }\}$ of $D\backslash
\{0\} $ there is a positive integer $n$ such that $(\{a_{\alpha }^{n}\})$ is
principal. According to \cite[Theorem 4.11]{AZ 1991} if $A\subseteq B$ is a
fixed root extension and $B$ is a subring of the integral closure of $A$,
then $A$ is an API domain if and only if $B$ is.

\begin{example}
\label{Example B2}Of course (1) every Dedekind domain $D$ with torsion class
group is an example of a one dimensional WKD such that $I(D)\vartriangleleft
^{t}P$ where $P=$ "--- is a proper nonzero principal ideal" . (2) In section
4 of \cite{AZ 1991} there are studied several examples of Noetherian API
domains that are not integrally closed. The simplest of these is $%
\mathbb{Z}
\lbrack 2i]=%
\mathbb{Z}
+2i%
\mathbb{Z}
.$ Since for each $a+bi\in $ $%
\mathbb{Z}
\lbrack i]$ we have $(a+bi)^{2}=a^{2}-b^{2}+2abi\in 
\mathbb{Z}
\lbrack 2i],$ this gives the conclusion that $%
\mathbb{Z}
\lbrack 2i]$ is Noetherian and that $%
\mathbb{Z}
\lbrack 2i]$ $\subseteq 
\mathbb{Z}
\lbrack 2i]$ is a fixed root extension. Because $%
\mathbb{Z}
\lbrack i]$ is a PID, Corollary 4.13 of \cite{AZ 1991} applies to give the
conclusion that $%
\mathbb{Z}
\lbrack 2i]$ is an API domain. That $%
\mathbb{Z}
\lbrack 2i]$ is one dimensional, follows from Theorem 2.1 of \cite{AZ 1991}.
Now let $M$ be a maximal ideal of $%
\mathbb{Z}
\lbrack 2i].$ Then $M$ is finitely generated, say $M=(x_{1},x_{2},...,x_{r})$
then $(x_{1}^{n},...,x_{r}^{n})$ is principal and, using Lemma 2.3 of \cite%
{XAZ 2019}, we conclude that $M^{nr}\subseteq (x_{1}^{n},...,x_{r}^{n}).$
(3) Finally, let $K$ be a field of characteristic $p>0$ and let $L$ be a
purely inseparable field extension of $K$ such that $L^{p}\subseteq K$ and
consider $T=K+XL[X].$ According to the information gathered prior to Example %
\ref{Example UXD}, the only non-principal maximal ideal of $T$ is $%
XL[X]=(X,lX)_{v}$ where $l\in L\backslash K.$ Obviously $%
(X^{p},(lX)^{p})_{v}=X^{p}$ and an application of Lemma 2.3 of \cite{XAZ
2019} or direct computation gives $(XL[X])^{2p}\subseteq ((XL[X])^{2p})_{v}$ 
$=((X,lX)^{2p})_{v}\subseteq X^{p}.$ The above can serve also as examples
for part (b), but all fastfaktorielle rings of \cite{Stor 1967} dubbed as
almost factorial domains in \cite{Fos 1973} can serve as examples as almost
factorial domains are nothing but Krull domains with torsion divisor class
groups. For non-Krull examples for (b) recall that, according to \cite{Zaf
1985}, an integral domain $D$ is called an AGCD domain if for each pair $%
a,b\in D\backslash \{0\}$ there is a positive integer $n=n(a,b)$ such that $%
a^{n}D\cap b^{n}D$ is principal (equivalently for every nonzero finitely
generated ideal $(a_{1},...,a_{r})$ there is $n=n$ $(a_{1},...,a_{r})\geq 1$
such that $(a_{1}^{n},...,a_{r}^{n})_{v}$ is principal). Any Noetherian AGCD
domain would serve as an example for (b). Reason: take a maximal $t$-ideal $%
M,$ it's finitely generated. Say $M=$ $(a_{1},...,a_{r}),$ for some $n\geq 1$
we must have $(a_{1}^{n},...,a_{r}^{n})_{v}=dD,$ principal. But then $%
M^{nr}\subseteq (a_{1}^{n},...,a_{r}^{n})\subseteq
(a_{1}^{n},...,a_{r}^{n})_{v}=dD,$ by Lemma 2.3 of \cite{XAZ 2019}.
\end{example}

\section{Generalizing Almost Bezout domains\label{Section S4}}

Following \cite{AZ 2019}, in a way, we call a domain $D$, for $\ast =d$ or $%
t,$ a $\ast $-almost Bezout ($\ast $-AB) (resp. $\ast $-almost Prufer ($\ast 
$-AP)) domain if for every finite set $x_{1},x_{2},...,x_{n}\in D\backslash
(0),$ there is a natural number $r=r(x_{1},x_{2},...,x_{n})$ such that $%
(x_{1}^{r},...,x_{n}^{r})^{\ast }$ is principal (resp., $\ast $-invertible).
Indeed a $d$-AB ($d$-AP) domain is the usual almost Bezout (almost Prufer)
domain, as defined in \cite{AZ 1991} and a $t$-AB (resp., $t$-AP) domain is
the usual AGCD domain, as defined in \cite{Zaf 1985} (resp., an APVMD, as
defined in \cite{L 1991} and studied in \cite{Li 2012}). It may be noted
that if the natural number $r=r(x_{1},x_{2},...,x_{n})$ is $1$ for each set $%
x_{1},x_{2},...,x_{n}\in D\backslash \{0\},$ we get the usual Bezout
(Prufer) domain for $\ast =d$ and the usual GCD domain (resp., PVMD) for $%
\ast =t.$

Twisting the definition a little we have the following lemma.

\begin{lemma}
\label{Lemma ZA}(1) A domain $D$ is $\ast $-AB ( resp., $\ast $-AP) if and
only if for every nonzero $\ast $-prime $P$ and for every set $%
x_{1},x_{2},...,x_{n}\in P\backslash (0)$ there is a natural number $%
r=r(x_{1},x_{2},...,x_{n})$ such that the ideal $%
(x_{1}^{r},x_{2}^{r},...,x_{n}^{r})^{\ast }$ is principal ($\ast $%
-invertible)$,$ (2) A nonzero prime ideal of a $\ast $-AB ($\ast $-AP)
domain $D$ is a $\ast $-ideal if and only if for every set $%
x_{1},x_{2},...,x_{n}\in P\backslash (0)$ there is a natural number $%
r=r(x_{1},x_{2},...,x_{n})$ such that the ideal $%
(x_{1}^{r},x_{2}^{r},...,x_{n}^{r})^{\ast }$ is contained in $P.$
\end{lemma}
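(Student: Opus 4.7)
The plan is to handle (1) by reducing any finite set of nonzero elements either to the trivial case or to a set sitting inside a maximal $\ast$-ideal, and to handle (2) by bridging between $(y_1,\ldots,y_k)^\ast$ and $(y_1^r,\ldots,y_k^r)^\ast$ via a pigeonhole count on monomials. For (1), the ``only if'' direction is immediate, since the $\ast$-AB (resp.\ $\ast$-AP) defining property holds for every finite subset of $D\setminus(0)$, in particular for any finite subset of $P\setminus(0)$. For the ``if'' direction, let $x_1,\ldots,x_n\in D\setminus(0)$. If $(x_1,\ldots,x_n)^\ast=D$, then $r=1$ works since $D=(1)D$ is principal and $\ast$-invertible. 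Otherwise $(x_1,\ldots,x_n)^\ast$ is a proper $\ast$-ideal of finite type, and since $\ast$ has finite character it is contained in a maximal $\ast$-ideal $M$, which is a $\ast$-prime; applying the hypothesis to $M$ and the elements $x_1,\ldots,x_n\in M\setminus(0)$ yields the desired $r$.

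For (2), the ``only if'' direction uses only that $P$ is a $\ast$-ideal: for any $x_1,\ldots,x_n\in P\setminus(0)$ and any $r$, $(x_1^r,\ldots,x_n^r)\subseteq P$, so $(x_1^r,\ldots,x_n^r)^\ast\subseteq P^\ast=P$. This direction does not actually invoke the $\ast$-AB hypothesis on $D$.

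The substance of the lemma lies in the ``if'' direction of (2). Suppose the stated condition, and let $y\in P^\ast$. Since $\ast$ has finite character, $y\in(y_1,\ldots,y_k)^\ast$ for some $y_i\in P\setminus(0)$ (any zero generators are discarded). The hypothesis supplies $r\geq 1$ with $(y_1^r,\ldots,y_k^r)^\ast\subseteq P$. The key step, and the main obstacle, is the elementary observation that any monomial $y_1^{a_1}\cdots y_k^{a_k}$ with $\sum a_i=k(r-1)+1$ must have some $a_j\geq r$ by pigeonhole, whence
\[
(y_1,\ldots,y_k)^{k(r-1)+1}\subseteq(y_1^r,\ldots,y_k^r).
\]
Iterating the $\ast$-multiplication identity $(I^\ast J^\ast)^\ast=(IJ)^\ast$ gives $(I^\ast)^m\subseteq(I^m)^\ast$ for every $m\geq 1$, so
\[
y^{k(r-1)+1}\in\bigl((y_1,\ldots,y_k)^\ast\bigr)^{k(r-1)+1}\subseteq\bigl((y_1,\ldots,y_k)^{k(r-1)+1}\bigr)^\ast\subseteq(y_1^r,\ldots,y_k^r)^\ast\subseteq P.
\]
Primeness of $P$ then forces $y\in P$, so $P^\ast=P$. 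The argument is identical in the $\ast$-AP case, since it uses only the containment $(y_1^r,\ldots,y_k^r)^\ast\subseteq P$ and not whether this ideal is principal or $\ast$-invertible.
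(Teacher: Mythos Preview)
Your proof is correct and follows essentially the same route as the paper's. The only cosmetic difference is that for the ``if'' direction of (2) you prove the inclusion $(y_1,\ldots,y_k)^{k(r-1)+1}\subseteq(y_1^r,\ldots,y_k^r)$ directly by pigeonhole, whereas the paper quotes the slightly weaker bound $(x_1,\ldots,x_n)^{nr}\subseteq(x_1^r,\ldots,x_n^r)$ from an external lemma; both feed into the same primeness argument.
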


\begin{proof}
(1) We show that the condition implies that $D$ is a $\ast $-AB ($\ast $-AP)
domain. Let $x_{1},x_{2},...,x_{n}\in D\backslash (0).$ If $%
(x_{1},x_{2},...,x_{n})$ is not contained in any $\ast $-ideal $P,$ then $%
(x_{1},x_{2},...,x_{n})^{\ast }=D$ and so $(x_{1},x_{2},...,x_{n})^{\ast }$
is principal for $r=1.$ Now if $0\neq (x_{1},x_{2},...,x_{n})\subseteq P$
then by the condition there is a natural number $r=r(x_{1},x_{2},...,x_{n})$
such that $(x_{1}^{r},...,x_{n}^{r})^{\ast }$ is principal (resp., $\ast $%
-invertible). The converse is obvious. (2) Let $P$ be a $\ast $-ideal in a $%
\ast $-AB ($\ast $-AP) domain $D.$ Then, obviously, for every set $%
x_{1},x_{2},...,x_{n}\in P\backslash (0)$ there is a natural number $%
r=r(x_{1},x_{2},...,x_{n})$ such that the ideal $%
(x_{1}^{r},x_{2}^{r},...,x_{n}^{r})^{\ast }$ is contained in $P.$ Conversely
suppose that $P$ is a prime ideal of a $\ast $-AB ($\ast $-AP) domain $D$
such that for every set $x_{1},x_{2},...,x_{n}\in P\backslash (0)$ there is
a natural number $r=r(x_{1},x_{2},...,x_{n})$ such that the ideal $%
(x_{1}^{r},x_{2}^{r},...,x_{n}^{r})^{\ast }$ is contained in $P.$ Since $%
(x_{1},x_{2},...,x_{n})^{nr}\subseteq (x_{1}^{r},x_{2}^{r},...,x_{n}^{r})$
by Lemma 2.3 of \cite{XAZ 2019} we have $((x_{1},x_{2},...,x_{n})^{nr})^{%
\ast }$ $=(((x_{1},x_{2},...,x_{n})^{\ast })^{nr})^{\ast }\subseteq
(x_{1}^{r},x_{2}^{r},...,x_{n}^{r})^{\ast }.$ Since $P$ is a prime ideal and
since $((x_{1},x_{2},...,x_{n})^{\ast })^{nr}\subseteq P$ we conclude that $%
(x_{1},x_{2},...,x_{n})^{\ast }\subseteq P.$
\end{proof}

\begin{definition}
\label{Definition ZB} We call a nonzero finitely generated ideal $%
(x_{1},x_{2},...,x_{n}),$ power divisible by a $\ast $-invertible $\ast $%
-ideal $J$ if for some natural number $s$ we have $%
(x_{1},x_{2},...,x_{n})^{s}\subseteq J$ and restricted power divisible if $J$
is in each prime $\ast $-ideal $P$ that contains $(x_{1},x_{2},...,x_{n}).$
Call $D$ a $\ast $-sub-almost Bezout ($\ast $-SAB) ($\ast $-sub-almost
Prufer($\ast $-SAP)) domain if for every set $x_{1},x_{2},...,x_{n}\in
D\backslash (0)$, there is a natural number $s=s(x_{1},x_{2},...,x_{n})$
such that the ideal $(x_{1},x_{2},...,x_{n})^{s}$ is contained in a
principal ideal ($\ast $-invertible $\ast $-ideal) $J$ such that $J$ belongs
to each prime $\ast $-ideal that contains $(x_{1},x_{2},...,x_{n}).$ Thus $D$
is a $\ast $-SAB ($\ast $SAP) domain if every proper finite type $\ast $%
-ideal of $D$ is strictly power divisible by at least one principal ideal ($%
\ast $-invertible $\ast $-ideal). Finally let $\Gamma $ be a pre-assigned
set of proper nonzero principal ( resp., $\ast $-invertible $\ast $-) ideals
of $D.$ Call $D$ an infra $\ast $-AB$\Gamma $ (resp., infra $\ast $-AP$%
\Gamma )$ domain if for each finitely generated ideal ideal $I$ of $D$ there
is a positive integer $n=n(I)$ and a $\gamma \in \Gamma $such that $%
I^{n}\subseteq \gamma .$
\end{definition}

The following lemma indicates the relationship between some of these
concepts.

\begin{lemma}
\label{Lemma ZC} (1) A Bezout domain is a GCD domain; (2) A GCD domain is a
PVMD; (3) A Prufer domain is a PVMD, (4) A Bezout domain is an AB domain (5)
An AB domain is an AGCD domain (6) An AGCD domain is an APVMD (7) A Bezout
domain is a Prufer domain (8) A Prufer domain is an AP domain, (9) An AB
domain is an AP domain (10) A $\ast $-SAB domain is a $\ast $-SAP domain
(11) An AB domain is a an SAB domain (12) An AP domain is an SAP domain,
(13) an AGCD ($t$-AB) domain is an SAGCD ($t$-SAB) domain and (14) An APVMD
is a SAPVMD ($t$-SAP), (15) a $\ast $-SAB (resp., $\ast $-SAP) domain is a $%
\ast $-AB$\Gamma $ (resp., $\ast $-AP$\Gamma )$ domain.
\end{lemma}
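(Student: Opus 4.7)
The plan is to group the fifteen implications by the mechanism behind them and handle them uniformly. Items (1)--(3) and (7) are standard facts of multiplicative ideal theory: Bezout implies GCD because a principal ideal has principal $v$-closure; GCD implies PVMD because in a GCD domain every finitely generated ideal has principal $v$-closure and is therefore $t$-invertible; Prufer implies PVMD because every finitely generated ideal is already invertible hence $t$-invertible; and Bezout implies Prufer because principal implies invertible. I would simply refer the reader to \cite{Gil 1972} here rather than reprove anything.

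The next pair, (4) and (8), is trivial: the Bezout (resp.\ Prufer) hypothesis is the special case $r=1$ of the AB (resp.\ AP) definition. The group (5), (6), (9), (10) rests on the single observation that a principal ideal is $\ast$-invertible for every star operation $\ast$. Thus if $(x_{1}^{r},\dots,x_{n}^{r})^{\ast}$ is principal it is automatically $\ast$-invertible, which yields AB $\Rightarrow$ AGCD (with $\ast=t$), AGCD $\Rightarrow$ APVMD, AB $\Rightarrow$ AP, and $\ast$-SAB $\Rightarrow$ $\ast$-SAP.

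The substantive group is (11)--(14), and I would handle all four cases uniformly for $\ast\in\{d,t\}$ and for the AB/AP variants. The tool is the inclusion
\[
(x_{1},\dots,x_{n})^{nr}\subseteq (x_{1}^{r},\dots,x_{n}^{r}),
\]
which is Lemma 2.3 of \cite{XAZ 2019} and already invoked earlier in the present paper. Given a $\ast$-AB (resp.\ $\ast$-AP) domain $D$ and a finite set $x_{1},\dots,x_{n}\in D\setminus\{0\}$, choose $r$ with $J:=(x_{1}^{r},\dots,x_{n}^{r})^{\ast}$ principal (resp.\ $\ast$-invertible); then $s:=nr$ satisfies $(x_{1},\dots,x_{n})^{s}\subseteq (x_{1}^{r},\dots,x_{n}^{r})\subseteq J$. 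The extra ``restricted'' clause in the SAB/SAP definition demands that $J$ lie in every prime $\ast$-ideal $P$ containing $(x_{1},\dots,x_{n})$; but $x_{i}\in P$ gives $x_{i}^{r}\in P$, and since $P$ is a $\ast$-ideal we get $J=(x_{1}^{r},\dots,x_{n}^{r})^{\ast}\subseteq P^{\ast}=P$. This disposes of (11)--(14). I expect this bookkeeping to be the only mildly delicate step in the whole lemma.

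Finally, (15) is a matter of unwinding the definitions: in a $\ast$-SAB (resp.\ $\ast$-SAP) domain, take $\Gamma$ to be the collection of all principal ideals (resp.\ $\ast$-invertible $\ast$-ideals) that arise as witnesses $J$ in the SAB/SAP condition. For every finitely generated $I=(x_{1},\dots,x_{n})$ the SAB/SAP hypothesis produces $s$ and $\gamma\in\Gamma$ with $I^{s}\subseteq\gamma$, which is exactly the infra $\ast$-AB$\Gamma$ (resp.\ infra $\ast$-AP$\Gamma$) condition. Thus the proof is essentially a bookkeeping exercise whose only nontrivial ingredient is the power-inclusion from \cite{XAZ 2019}, already identified above.
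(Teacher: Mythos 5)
Your proposal is correct and follows essentially the same route as the paper: items (1)--(10) and (15) are disposed of by unwinding definitions (principal $\Rightarrow$ invertible $\Rightarrow$ $\ast$-invertible, and $r=1$ for the Bezout/Prufer cases), while (11)--(14) rest on the inclusion $(x_{1},\dots,x_{n})^{nr}\subseteq(x_{1}^{r},\dots,x_{n}^{r})$ from Lemma 2.3 of \cite{XAZ 2019}, exactly as in the paper. Your explicit verification that the witness $J=(x_{1}^{r},\dots,x_{n}^{r})^{\ast}$ lies in every prime $\ast$-ideal containing $(x_{1},\dots,x_{n})$ is in fact slightly more careful than the paper's parenthetical treatment of that clause.
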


\begin{proof}
While the proofs of most of the above statements are well known, we briefly
go through each just to remind the reader of the definitions. (Of course a
reader conversant with the notions can skip the proofs.)

(1) Every finitely generated (nonzero) ideal is principal implies that for
every finitely generated nonzero ideal $I$ we have $I_{v}$ principal.

(2) $I_{v}$ is principal for every finitely generated non zero $I$ implies $%
I_{v}$ is $t$-invertible for every finitely generated non zero $I$ ( it's a
case of principal is $t$-invertible).

(3) $I$ is invertible for every finitely generated non zero $I$ implies $I$
is $t$-invertible for every finitely generated non zero $I.$ (It's a case of
invertible is $t$-invertible.)

(4) For all $x_{1},x_{2},...,x_{n}\in D\backslash \{0\}$ $%
(x_{1},x_{2},...,x_{n})$ is principal implies there is $%
r=r(x_{1},x_{2},...,x_{n})=1$ such that $(x_{1}^{r},x_{2}^{r},...,x_{n}^{r})$
is principal.

(5) For all $x_{1},x_{2},...,x_{n}\in D\backslash (0)$ there is a natural
number $r=r(x_{1},x_{2},...,x_{n})$ such that the ideal $%
(x_{1}^{r},x_{2}^{r},...,x_{n}^{r})$ is principal implies for all $%
x_{1},x_{2},...,x_{n}\in D\backslash (0)$ there is a natural number $%
r=r(x_{1},x_{2},...,x_{n})$ such that the ideal $%
(x_{1}^{r},x_{2}^{r},...,x_{n}^{r})_{v}$ is principal. (Every principal
ideal is a $v$-ideal.)

(6) For all $x_{1},x_{2},...,x_{n}\in D\backslash (0)$ there is a natural
number $r=r(x_{1},x_{2},...,x_{n})$ such that the ideal $%
(x_{1}^{r},x_{2}^{r},...,x_{n}^{r})_{v}$ is principal implies that for all $%
x_{1},x_{2},...,x_{n}\in D\backslash (0)$ there is a natural number $%
r=r(x_{1},x_{2},...,x_{n})$ such that the ideal $%
(x_{1}^{r},x_{2}^{r},...,x_{n}^{r})_{v}$ is $t$-invertible.

(7) For all $x_{1},x_{2},...,x_{n}\in D\backslash (0)$, $%
(x_{1},x_{2},...,x_{n})$ principal implies for all $x_{1},x_{2},...,x_{n}\in
D\backslash (0)$, $(x_{1},x_{2},...,x_{n})$ is invertible.

(8) For all $x_{1},x_{2},...,x_{n}\in D\backslash (0)$ such that $%
(x_{1},x_{2},...,x_{n})$ is invertible implies that for all $%
x_{1},x_{2},...,x_{n}\in D\backslash (0)$ there is a natural number $%
r=r(x_{1},x_{2},...,x_{n})=1$ such that the ideal $%
(x_{1}^{r},x_{2}^{r},...,x_{n}^{r})$ is $t$-invertible.

(9) for all $x_{1},x_{2},...,x_{n}\in D\backslash (0)$ there is a natural
number $r=r(x_{1},x_{2},...,x_{n})$ such that the ideal $%
(x_{1}^{r},x_{2}^{r},...,x_{n}^{r})$ is principal implies that for all $%
x_{1},x_{2},...,x_{n}\in D\backslash (0)$ there is a natural number $%
r=r(x_{1},x_{2},...,x_{n})$ such that the ideal $%
(x_{1}^{r},x_{2}^{r},...,x_{n}^{r})$ is invertible.

(10) follows because a principal ideal is a $\ast $-invertible $\ast $-ideal.

(11) for each nonzero prime ideal $P$ of $D$ and for all $%
x_{1},x_{2},...,x_{n}\in P\backslash (0)$ the existence of $%
r=r(x_{1},x_{2},...,x_{n})$ such that $(x_{1}^{r},x_{2}^{r},...,x_{n}^{r})$
is a principal ideal contained in $P$ implies that for each nonzero prime
ideal $P$ of $D$ and for all $x_{1},x_{2},...,x_{n}\in P\backslash (0)$
there exists $s=s(x_{1},x_{2},...,x_{n})=rn$ such that $%
(x_{1},x_{2},...,x_{n})^{s}$ is contained in a principal ideal contained in $%
P,$ (indeed as $(x_{1},x_{2},...,x_{n})^{rn}\subseteq
(x_{1}^{r},x_{2}^{r},...,x_{n}^{r}),$ by Lemma 2.3 \cite{XAZ 2019}).

(12) for each nonzero prime ideal $P$ of $D$ and for all $%
x_{1},x_{2},...,x_{n}\in P\backslash (0)$ the existence of $%
r=r(x_{1},x_{2},...,x_{n})$ such that $(x_{1}^{r},x_{2}^{r},...,x_{n}^{r})$
is an invertible ideal contained in $P$ implies that for each nonzero prime
ideal $P$ of $D$ and for all $x_{1},x_{2},...,x_{n}\in P\backslash (0)$
there exists $s=s(x_{1},x_{2},...,x_{n})=rn$ such that $%
(x_{1},x_{2},...,x_{n})^{s}$ is contained in an invertible ideal $J$
contained in $P$ (and hence in every prime $\ast $-ideal $Q$ containing $%
(x_{1},x_{2},...,x_{n}).$

(13) for each nonzero prime ideal $P$ of $D$ and for all $%
x_{1},x_{2},...,x_{n}\in P\backslash (0)$ the existence of $%
r=r(x_{1},x_{2},...,x_{n})$ such that $%
(x_{1}^{r},x_{2}^{r},...,x_{n}^{r})_{t}$ is a principal ideal contained in $%
P $ implies that for each nonzero prime ideal $P$ of $D$ and for all $%
x_{1},x_{2},...,x_{n}\in P\backslash (0)$ there exists $%
s=s(x_{1},x_{2},...,x_{n})=rn$ such that $(x_{1},x_{2},...,x_{n})^{s}$ is
contained in a principal ideal $J$ contained in $P.$

(14) for each nonzero prime ideal $P$ of $D$ and for all $%
x_{1},x_{2},...,x_{n}\in P\backslash (0)$ the existence of $%
r=r(x_{1},x_{2},...,x_{n})$ such that $(x_{1}^{r},x_{2}^{r},...,x_{n}^{r})$
is a $t$-invertible ideal contained in $P$ implies that for each nonzero
prime ideal $P$ of $D$ and for all $x_{1},x_{2},...,x_{n}\in P\backslash (0)$
there exists $s=s(x_{1},x_{2},...,x_{n})=rn$ such that $%
(x_{1},x_{2},...,x_{n})^{s}$ is contained in a $t$-invertible $t$-ideal $J$
contained in $P.$

(15) Direct. Set $\Gamma =\{\gamma |\gamma $ is a principal ideal of $D\}$
for a $\ast $-SAB domain $D.$ Likewise, for a $\ast $-SAP domain set $\Gamma
=\{\gamma |\gamma $ is a $\ast $-ivertible $\ast $-ideal of $D\}.$
\end{proof}

Examples.

(1) Every $(\ast $-AB (resp., $\ast $-AP) domain is a $\ast $-SAB (resp., $%
\ast $-SAP) domain.

(2) The $D+XK[X]$ construction from a $\ast $-SAB (resp., $\ast $-SAP)
domain delivers a $\ast $-SAB (resp., $\ast $-SAP) domain. More precisely we
have the following result.

\begin{theorem}
\label{Theorem ZC1} Let $K$ be the quotient field of a domain $D$ and let $X$
be an indeterminate over $K.$ Then $D$ is a $\ast $-SAB (resp., $\ast $-SAP)
domain if and only if $R=D+XK[X]$ is.
\end{theorem}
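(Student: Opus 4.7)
The plan is to exploit the known structure of prime $\ast$-ideals of $R=D+XK[X]$: each prime $\ast$-ideal $\mathcal{P}$ of $R$ is either a principal prime $(1+Xh)R$ with $1+Xh\in K[X]$ irreducible (the case $\mathcal{P}\cap D=(0)$) or is of the form $PR=P+XK[X]$ for a prime $\ast$-ideal $P$ of $D$ (the case $\mathcal{P}\cap D\neq (0)$). Together with the identity $(JR)_{\ast}=J_{\ast}+XK[X]$ recorded in Corollary~\ref{Corollary UX0B}, this will let me transport SAB and SAP data back and forth between $D$ and $R$.

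For the forward direction, fix $f_{1},\ldots,f_{n}\in R\setminus\{0\}$ and use that $K[X]$ is a PID to write $(f_{1},\ldots,f_{n})K[X]=cK[X]$ for some $c\in K[X]$. Let $d_{i}=f_{i}(0)\in D$. If some $d_{i}\neq 0$, rescale $c$ so that $c(0)=1$; then each $g_{i}:=f_{i}/c\in K[X]$ satisfies $g_{i}(0)=d_{i}\in D$, so $g_{i}\in R$ and $f_{i}=cg_{i}$. Applying $\ast$-SAB (resp.\ $\ast$-SAP) in $D$ to the nonzero $d_{i}$'s yields an integer $s$ and a principal (resp.\ $\ast$-invertible $\ast$-) ideal $A\subseteq D$ with $(d_{1},\ldots,d_{n})^{s}\subseteq A$ and $A\subseteq P$ for every prime $\ast$-ideal $P$ of $D$ containing $(d_{1},\ldots,d_{n})$. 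I would then take $J=acR$ in the SAB case (writing $A=aD$) and $J=(cAR)^{\ast}$ in the SAP case. The inclusion $(f_{1},\ldots,f_{n})^{s}R\subseteq J$ follows by writing each $s$-fold product $f_{i_{1}}\cdots f_{i_{s}}=c^{s}g_{i_{1}}\cdots g_{i_{s}}$ and checking that the ``quotient'' $c^{s-1}g_{i_{1}}\cdots g_{i_{s}}/a$ lies in $R$: its constant term is $\prod d_{i_{k}}/a\in D$ by construction, and its higher coefficients are automatically in $K$. Containment $J\subseteq\mathcal{P}$ splits: if $\mathcal{P}=(1+Xh)R$ contains $I$, then $(1+Xh)\mid c$ in $K[X]$ and $c\in R$ force $c\in(1+Xh)R$; if $\mathcal{P}=PR$, then $A\subseteq P$ gives $AR\subseteq PR$. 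The case where all $d_{i}=0$ is even easier: $c\in XK[X]\subseteq R$, so $J=cR$ works already with $s=2$, and $XK[X]\subseteq PR$ handles all non-principal primes automatically.

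For the converse, given $x_{1},\ldots,x_{n}\in D\setminus\{0\}$, apply $\ast$-SAB (resp.\ $\ast$-SAP) in $R$ to produce $s$ and a principal (resp.\ $\ast$-invertible $\ast$-) ideal $J$ of $R$ with $(x_{1},\ldots,x_{n})^{s}R\subseteq J$ and $J\subseteq\mathcal{P}$ for every prime $\ast$-ideal $\mathcal{P}\supseteq(x_{1},\ldots,x_{n})R$. In the SAB case, write $J=\beta R$ and set $a=\beta(0)\in D$; the evaluation $X\mapsto 0$ is a ring homomorphism $R\to D$ that carries $\beta R\cap D$ into $aD$, so $(x_{1},\ldots,x_{n})^{s}\subseteq aD$, and $a\neq 0$ since otherwise $\beta R\subseteq XK[X]$ would fail to contain $x_{1}^{s}\in D\setminus\{0\}$. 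For any prime $\ast$-ideal $P$ of $D$ containing the $x_{i}$'s, $PR$ is a prime $\ast$-ideal of $R$ containing them, forcing $\beta\in PR$ and hence $a=\beta(0)\in P$. In the SAP case, set $A=J\cap D$; since $J\cap D\neq(0)$, Lemma~1.1 of \cite{CMZ 1986} gives $J=A+XK[X]=AR$, and the equivalence of $\ast$-invertibility for $A$ in $D$ with that for $AR$ in $R$ (via Corollary~\ref{Corollary UX0B}) delivers the conclusion, with $(x_{1},\ldots,x_{n})^{s}\subseteq A$ and $A\subseteq P$ for every prime $\ast$-ideal $P$ of $D$ containing the $x_{i}$'s following from the analogous containments in $R$.

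The principal obstacle is verifying $\ast$-invertibility across the extension $D\subseteq R$ for the SAP statement: namely, that $(cAR)^{\ast}$ is $\ast$-invertible in $R$ (forward) and that $J\cap D$ is $\ast$-invertible in $D$ (backward). Both reduce to the transfer principle $(JR)_{\ast}=J_{\ast}R$ underlying Corollary~\ref{Corollary UX0B}. The remaining bookkeeping---coefficient tracking with the factorization $f_{i}=cg_{i}$ and the split between $d_{i}=0$ and $d_{i}\neq 0$---is routine but must be carried out uniformly for $\ast=d$ and $\ast=t$.
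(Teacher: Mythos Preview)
Your argument is correct and follows essentially the same route as the paper's proof. Both proofs factor a finitely generated ideal $F$ of $R$ as a product of a ``polynomial part'' (your GCD $c$ in $K[X]$, the paper's $f(X)$ from \cite[Proposition 4.12]{CMZ 1978}) and a ``$D$-part'' (your $(d_1,\ldots,d_n)$, the paper's $J$); when $c(0)=1$ these $D$-ideals coincide, since $(d_1,\ldots,d_n)$ is exactly the image of $F$ under evaluation at $0$. The only noteworthy difference is that in the degenerate case $c(0)=0$ you take $J=cR$ directly, whereas the paper still extracts a nontrivial $J\subseteq D$ and carries it through---your shortcut is legitimate because $c\in XK[X]$ already lies in every prime $\ast$-ideal of $R$ of the form $P+XK[X]$ or $XK[X]$, so no information from $D$ is needed there. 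The converse directions are argued identically in both proofs, via $J\cap D\neq (0)$ forcing $J=(J\cap D)+XK[X]$.
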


\begin{proof}
Let $F=(f_{1},f_{2},...,f_{n}).$ Then according to \cite[Proposition 4.12]%
{CMZ 1978}, $F=f(X)(J+XK[X])$ where $f(X)\in R$ and $J$ is a finitely
generated ideal of $D.$ If $f(0)=0$ we conclude that $f(X)=g(X)(kX^{r})$
where $g(0)=1$, $k\in K\backslash \{0\}$ and $r>0.$ Then $%
F^{2}=g(X)^{2}(k^{2}X^{2r})(J+XK[X])^{2}\subseteq g(X)X(J+XK[X]).$ Obviously 
$F$ is contained in the same prime $t$-ideals as $g(X)X(J+XK[X])$ is. Next
as $D$ is a $\ast $-SAB (resp., $\ast $-SAP) domain and as $J$ is a finitely
generated ideal, for some natural number $r,$ $J^{r}\subseteq H$ where $H$
is a principal ideal (resp., $\ast $-invertble $\ast $-ideal) and $H$ is
contained in the same prime $\ast $-ideals as $J.$ But then $H+XK[X]$ is a
principal ideal (resp., $\ast $-invertble $\ast $-ideal) that is contained
in the same prime $\ast $-ideals as $J+XK[X]$ is. Thus $F^{2r}\subseteq
g(X)^{r}X^{r}(J+XK[X])^{r}\subseteq g(X)^{r}X^{r}(H+XK[X])$ which is a
principal ideal (resp., $\ast $-invertble $\ast $-ideal) that is contained
in the same prime $\ast $-ideals as $F.$ If $f(X)\neq 0,$ we can take $%
f(0)=1 $ and so $f(X)=1$ or $f(X)=p_{1}^{r_{1}}...p_{s}^{r_{s}}$ where $%
p_{i}R$ is a a height one maximal ideal, for each $i.$ In both cases $%
F^{n}=f((X)^{n}(J^{n}+XK[X])$ for all natural numbers $n$. Next as $J$ is a
finitely generated ideal of $D,$ and $D$ is a $\ast $-SAB ($\ast $SAP)
domain, we have a natural number $r$ and a principal ideal (resp., $\ast $%
-invertible $\ast $-ideal) $H$ such that $H$ is contained in each prime $%
\ast $-ideal that contains $J^{r}.$ But then, as above, $H+XK[X]$ is a
principal ideal (resp., $\ast $-invertible $\ast $-ideal) containing $%
J^{n}+XK[X]$ and contained in the same prime $\ast $-ideals $P+XK[X],$ as $%
J+K[X]$ is. Consequently. $F^{r}=f((X)^{r}(J^{r}+XK[X])\subseteq
f((X)^{r}(H+XK[X]).$

Conversely suppose that $D+XK[X]$ is a $\ast $SAB (resp., $\ast $SAP)
domain. Then in particular for every set $x_{1},x_{2},...,x_{n}\in
D\backslash (0),$ there is a natural number $r$ and a principal ideal
(resp., $\ast $-invertible $\ast $-ideal) $J$ of $R$ such that $%
((x_{1},x_{2},...,x_{n})R)^{r}\subseteq J$ where $J$ is contained in each of
the prime $\ast $-ideals $%
(x_{1},x_{2},...,x_{n})R=(x_{1},x_{2},...,x_{n})+XK[X]$ is contained in.
Obviously, as $((x_{1},x_{2},...,x_{n})R)^{r}\cap D\neq (0),$ $J=H+XK[X]$
where $H$ is a principal ideal (resp., $\ast $-invertible $\ast $-ideal)
because $J$ is. But then every prime $\ast $-ideal containing $J$ and $%
(x_{1},x_{2},...,x_{n})+XK[X]$ is of the form $P+XK[X]$ where $P$ contains $%
H $ and $(x_{1},x_{2},...,x_{n}).$ Also since $%
(x_{1},x_{2},...,x_{n})^{r}+XK[X]\subseteq J=H+XK[X]$ we have $%
(x_{1},x_{2},...,x_{n})^{r}\subseteq H.$ Indeed $H$ is contained in the same
prime $\ast $-ideals that contain $(x_{1},x_{2},...,x_{n})$. Since the
choice of $x_{1},x_{2},...,x_{n}\in D\backslash \{0\}$ is arbitrary we
conclude that $D$ is a $\ast $-SAB ($\ast $-SAP) domain.
\end{proof}

(3) For $L$ a field extension of $K$, the $D+XL[X]$ construction from a $%
\ast $-SAB (resp., $\ast $-SAP) domain delivers a $\ast $-SAB (resp., $\ast $%
-SAP) domain. More precisely we have the following result. (Theorem \ref%
{Theorem ZC2} can actually replace Theorem \ref{Theorem ZC1} but while
Theorem \ref{Theorem ZC1} delivers AGCD domain from AGCD domains directly,
Theorem \ref{Theorem ZC2} may need an adjustment.)

\begin{theorem}
\label{Theorem ZC2} Let $L$ be an extension of the quotient field $K$ of $D$
and let $X$ be an indeterminate over $L.$ Then $D$ is a $\ast $-SAB (resp., $%
\ast $-SAP) domain if and only if $R=D+XL[X]$ is.
\end{theorem}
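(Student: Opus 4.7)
The plan is to mirror the proof of Theorem \ref{Theorem ZC1} almost verbatim, replacing $K$ by $L$ wherever the underlying decomposition of finitely generated ideals of $R$ still goes through, and isolating the point where the generality of $L$ forces an adjustment. The key structural input is that the prime $\ast$-ideals of $R=D+XL[X]$ come in exactly two flavours: those that meet $D$ trivially (principal, generated by an irreducible $1+Xh(X)\in L[X]$, by \cite[Lemmas 1.2, 1.5]{CMZ 1986}) and those of the form $P+XL[X]$ with $P$ a prime $\ast$-ideal of $D$; moreover $A+XL[X]=A(D+XL[X])=AR$ for any ideal $A$ of $D$, and $(A+XL[X])_{\ast}=A_{\ast}+XL[X]$, with $A_{\ast}+XL[X]$ being a $\ast$-ideal of finite type (resp.\ $\ast$-invertible) exactly when $A_{\ast}$ is.

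For the forward direction, I would take a finitely generated ideal $F=(f_{1},\dots,f_{n})\subseteq R$ and decompose it, along the lines of \cite[Proposition 4.12]{CMZ 1978}, as $F=f(X)\bigl(J+XL[X]\bigr)$ up to an extraction of a factor $X^{r}$ and a unit of $L[X]$, where $J$ is a finitely generated ideal of $D$ obtained from the constant terms (or, if all the $f_{i}$ vanish at $0$, from the coefficients after cancelling a suitable power of $X$) and $f(X)$ is (a product of) an irreducible polynomial $1+Xg(X)$ and a power of $X$. Applying the $\ast$-SAB (resp.\ $\ast$-SAP) hypothesis for $D$ to $J$ yields an integer $r\ge 1$ and a principal ideal (resp.\ $\ast$-invertible $\ast$-ideal) $H\subseteq D$ with $J^{r}\subseteq H$ and $H$ contained in exactly the same prime $\ast$-ideals of $D$ as $J$; then $H+XL[X]=HR$ is a principal ideal (resp.\ $\ast$-invertible $\ast$-ideal) of $R$ sharing prime $\ast$-ideals with $J+XL[X]$, and a suitable power $F^{N}$ lands inside $f(X)^{N'}\cdot(H+XL[X])$, which is the required container.

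For the converse, I would specialise $R$'s $\ast$-SAB (resp.\ $\ast$-SAP) hypothesis to ideals $(x_{1},\dots,x_{n})R=(x_{1},\dots,x_{n})+XL[X]$ with $x_{i}\in D\setminus\{0\}$. The resulting container $J\subseteq R$ with $\bigl((x_{1},\dots,x_{n})R\bigr)^{r}\subseteq J$ satisfies $J\cap D\ne(0)$, hence by \cite[Lemma 1.1]{CMZ 1986} it has the form $J=H+XL[X]$ with $H$ a principal ideal (resp.\ $\ast$-invertible $\ast$-ideal) of $D$. Intersecting with $D$ gives $(x_{1},\dots,x_{n})^{r}\subseteq H$, and the prime-$\ast$-ideals of $D$ containing $H$ are precisely those $P$ for which $P+XL[X]$ contains $J$, which are the prime $\ast$-ideals of $D$ containing $(x_{1},\dots,x_{n})$. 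This certifies $D$ as $\ast$-SAB (resp.\ $\ast$-SAP).

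The main obstacle is exactly the "adjustment" the author hints at just before the theorem: when $L\supsetneq K$ the factor $f(X)$ in the decomposition of $F$ may involve elements of $L\setminus K$, so the cleanest form $F=f(X)(J+XL[X])$ of Theorem \ref{Theorem ZC1} is available only after possibly replacing $F$ by a power (to clear denominators of $L$ against a common element of $D$) and handling the $X^{r}$ factor separately, as in the case $f(0)=0$ of the earlier proof. Once that bookkeeping is in place, the inclusion $F^{N}\subseteq f(X)^{N'}(H+XL[X])$ and the matching of prime $\ast$-ideals go through exactly as before, since containments of $\ast$-primes in $R$ are controlled by their contractions to $D$ together with the $\ast$-closure identity $A_{\ast}+XL[X]=(A+XL[X])_{\ast}$.
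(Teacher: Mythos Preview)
Your converse direction is fine and matches the paper. The forward direction has a genuine gap, and it is precisely the ``adjustment'' you tried to anticipate but misdiagnosed. In the decomposition $F=f(X)(J+XL[X])$ (the paper cites \cite[Proposition 3]{Z 2020} for this), $J$ is a finitely generated $D$-\emph{submodule of $L$}, not in general an ideal of $D$. Concretely, take $D=\mathbb{Z}$, $L=\mathbb{Q}(\sqrt{2})$, and $F=(X,\sqrt{2}\,X)\subseteq R$; after cancelling $X$ you get $J=\mathbb{Z}+\mathbb{Z}\sqrt{2}$, which is not contained in $D$. Your plan to ``clear denominators of $L$ against a common element of $D$'' does not apply here: $\sqrt{2}$ is not a fraction over $D$, and no power of $F$ will force $J$ back into $D$.

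The paper handles this by splitting the case $f(0)=0$ into (a) $J\subseteq D$, where your argument goes through verbatim, and (b) $J\not\subseteq D$. In case~(b) one argues directly that $F^{2}\subseteq g(X)X(J+XL[X])$ and that the only prime $\ast$-ideals containing this are those containing $g(X)X$; then $F^{4}\subseteq g(X)XR$, which is already principal and sits in exactly the right primes, so no appeal to the $\ast$-SAB/$\ast$-SAP hypothesis on $D$ is needed at all for such $F$. When $f(0)\neq 0$ one checks (again from the structure of $R$) that $J$ is automatically an ideal of $D$, and your argument applies. So the fix is not bookkeeping with powers of $F$; it is recognising that the ``bad'' $J$'s produce ideals $F$ whose restricted power divisor can be taken to be $g(X)XR$ outright.
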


\begin{proof}
Let $F=(f_{1},f_{2},...,f_{n}).$ Then according to \cite[Proposition 3]{Z
2020}, $F=f(X)(J+XK[X])$ where $f(X)\in R$ and $J$ is a finitely generated $%
D $-submodule of $L.$ If $f(0)=0,$ we conclude that $f(X)=g(X)(lX^{r})$
where $g(0)=1$, $l\in L\backslash \{0\}$ and $r>0.$ This gives $%
F^{2}=g(X)^{2}(l^{2}X^{2r})(J+XL[X])^{2}\subseteq g(X)X(J+XL[X]).$ Obviously 
$F$ is contained in the same prime $t$-ideals as $g(X)X(J+XL[X])$ is. Two
cases arise here: (a) $J$ is an ideal of $D$ and (b) $J$ is a $D$-submodule
of $L$ such that $J$ is not contained in $D.$ In case (b), $XL[X]$ is the
only prime $\ast $-ideal containing $X(J+XL[X]].$ Thus $g(X)X(J+XL[X])$ is
contained in the same prime $t$-ideals that $g(X)X$ is contained in. But $%
F^{4}\subseteq g(X)XR,$ which is a principal ideal and hence a $\ast $%
-invertible $\ast $-ideal. In case (a) as $D$ is a $\ast $-SAB (resp., $\ast 
$-SAP) domain and as $J$ is a finitely generated ideal of $D$, for some
natural number $r,$ $J^{r}\subseteq H$ where $H$ is a principal ideal
(resp., $\ast $-invertble $\ast $-ideal) and $H$ is contained in the same
prime $\ast $-ideals as $J.$ But then $H+XL[X]$ is a principal ideal (resp., 
$\ast $-invertble $\ast $-ideal) that is contained in the same prime $\ast $%
-ideals as $J+XL[X]$ is. Thus $F^{2r}\subseteq
g(X)^{r}X^{r}(J+XL[X])^{r}\subseteq g(X)^{r}X^{r}(H+XL[X])$ which is a
principal ideal (resp., $\ast $-invertble $\ast $-ideal) that is contained
in the same prime $\ast $-ideals as $F.$ If $f(X)\neq 0,$ we can take $%
f(0)=1 $ and so $f(X)=1$ or $f(X)=p_{1}^{r_{1}}...p_{s}^{r_{s}}$ where $%
p_{i}R$ is a a height one maximal ideal, for each $i$ and $J$ is a finitely
generated ideal of $D.$ In both cases $F^{n}=f((X)^{n}(J^{n}+XL[X])$ for all
natural numbers $n$. Next as $J$ is a finitely generated ideal of $D,$ and $%
D $ is a $\ast $-SAB ($\ast $-SAP) domain, we have a natural number $r$ and
a principal ideal (resp., $\ast $-invertible $\ast $-ideal) $H$ such that $H$
is contained in each prime $\ast $-ideal that contains $J^{r}.$ But then, as
above, $H+XL[X]$ is a principal ideal (resp., $\ast $-invertible $\ast $%
-ideal) containing $J^{n}+XL[X]$ and contained in the same prime $\ast $%
-ideals $P+XL[X],$ as $J+L[X]$. Thus $F^{r}=f((X)^{r}(J^{r}+XL[X])\subseteq
f((X)^{r}(H+XL[X]).$

Conversely suppose that $D+XL[X]$ a $\ast $-SAB ($\ast $-SAP) domain. Then,
in particular, for every set $x_{1},x_{2},...,x_{n}\in D\backslash (0),$
there is a natural number $r$ and a principal ideal (resp., $\ast $%
-invertible $\ast $-ideal) $J$ of $R$ such that $%
((x_{1},x_{2},...,x_{n})R)^{r}\subseteq J$ where $J$ is contained in each of
the prime $\ast $-ideals $%
(x_{1},x_{2},...,x_{n})R=(x_{1},x_{2},...,x_{n})+XL[X]$ is contained in.
Obviously, as $((x_{1},x_{2},...,x_{n})R)^{r}\cap D\neq (0),$ $%
J=H+XL[X]=H(D+XL[X])$ where $H$ is a principal ideal (resp., $\ast $%
-invertible $\ast $-ideal) because $J$ is. But then every prime $\ast $%
-ideal containing $J$ and $(x_{1},x_{2},...,x_{n})+XL[X]$ is of the form $%
P+XL[X]$ where $P$ contains $H$ and $(x_{1},x_{2},...,x_{n}).$ Also since $%
(x_{1},x_{2},...,x_{n})^{r}+XL[X]\subseteq J=H+XL[X]$ we have $%
(x_{1},x_{2},...,x_{n})^{r}\subseteq H.$ Indeed $H$ is contained in the same
prime $\ast $-ideals that contain $(x_{1},x_{2},...,x_{n})$. Since the
choice of $x_{1},x_{2},...,x_{n}\in D\backslash \{0\}$ is arbitrary we
conclude that $D$ is a $\ast $-SAB ($\ast $-SAP) domain.
\end{proof}

\begin{remark}
\label{Remark ZC3} Since by definition a $\ast $-SAB domain is a $\ast $-SAP
domain, any result proved for a $\ast $-SAP domain will hold for a $\ast $%
-SAB domain, especially in the context of finiteness of character of these
domains. However, before we get to that , we take a somewhat more general
line.
\end{remark}

A domain $D$ is said to be of finite $\ast $-character if every nonzero
non-unit $x$ of $D$ belongs to only a finite number of maximal $\ast $%
-ideals. There have been several efforts at characterizing when a domain is
of finite $\ast $-character e.g. \cite{DZ 2010}, \cite{DLMZ 2001} etc.. But,
as we shall see below, the following is the most comprehensive treatment,
that may even take care of some kinks in earlier efforts.

Let $f(D)$ (resp., $f_{t}(D))$ be the set of proper nonzero finitely
generated ideals (proper $t$-ideals of finite type) and let $f_{\ast }%
\mathcal{(}D\mathcal{)}$ denote $f(D)$ (for $\ast =d)$ (resp., $f_{t}(D$ for 
$\ast =t)$. Also let $P$ be a property that defines a non-empty subset $%
\Gamma $ of proper ideals of $f(D),$ (resp., proper members of $f_{t}\left(
D\right) ).$ We say that $f_{\ast }\mathcal{(}D\mathcal{)}$ meets $P$ $($%
denoted $f_{\ast }\mathcal{(}D\mathcal{)}\vartriangleleft P)$ if for all $%
I\in f_{\ast }\mathcal{(}D\mathcal{)}$ we have $I\subseteq \gamma $ for some 
$\gamma \in \Gamma .$ We also say, as we have done at the start, that $%
f_{\ast }\mathcal{(}D\mathcal{)}$ meets $P$ with a twist (denoted $f_{\ast }%
\mathcal{(}D\mathcal{)}\vartriangleleft ^{t}P)$ if for all $I\in f_{\ast }%
\mathcal{(}D\mathcal{)}$ we have $I^{n}\subseteq \gamma $ for some positive
integer $n=n(I)$ and for some $\gamma \in \Gamma .$ Since $f_{\ast }\mathcal{%
(}D\mathcal{)}\vartriangleleft ^{t}P$ reduces to $f_{\ast }\mathcal{(}D%
\mathcal{)}\vartriangleleft P$ when we require $n=n(I)=1,$ we shall mainly
deal with $f_{\ast }\mathcal{(}D\mathcal{)}\vartriangleleft ^{t}P.$ (It may
be noted that domains $D$ satisfying $f_{\ast }\mathcal{(}D\mathcal{)}%
\vartriangleleft ^{t}P$ are nothing but infra $\ast $-AB$\Gamma $ (resp.,
infra $\ast $-AB$\Gamma )$ domains which, as we shall later see, are
different from the $\ast $-SAB ($\ast $-SAP) domains.) In this regard we
need to make some preparation. Let $I$ be a proper $\ast $-ideal of finite
type of $D.$ By the Span of $I$ (denoted $Span(I))$ we mean the set of
factors, from $\Gamma ,$ of all positive integral powers of $I.$ We say that 
$\ast $-ideals of finite type of $D$ satisfy Conrad's twisted condition $%
(F^{t})$ if for each proper finite type $\ast $-ideal $I,$ $Span(I)$ does
not contain an infinite sequence of mutually $\ast $-comaximal members.
Let's also call a finite type $\ast $-ideal $I$ $\ast $-homogeneous if $%
Span(I)$ contains no two $\ast $-comaximal members of $\Gamma $. We note
that if $J$ contains a power of $I$ then $Span(J)\subseteq Span(I).$ Also if 
$I$ and $J$ are $\ast $-comaximal then $Span(I)\cap Span(J)=\phi .$

\begin{lemma}
\label{Lemma ZC4} Let $D$ be a domain such that $f_{\ast }\mathcal{(}D%
\mathcal{)}\vartriangleleft ^{t}P.$ Then (1) $I$ is a $\ast $-homogeneous
ideal of $D$ if and only if $I$ is contained in a unique maximal $\ast $%
-ideal $M$, (2) $M=\{x\in D|(x,I)^{\ast }\neq D\}=M(I)$, (3) $I,J$ are $\ast 
$-comaximal if and only if $Span(I)\cap Span(J)=\phi $ and (4) Let $I$ and $%
J $ be $\ast $-homogeneous. Then $(I,J)^{\ast }=D$ or $M(I)=M(J).$
\end{lemma}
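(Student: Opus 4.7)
The plan is to prove the four parts in an order that lets each feed the next, built on one recurring observation: whenever $\gamma\in Span(I)$ we have $\gamma\supseteq I^{n}$ for some $n$, so every maximal $\ast$-ideal $N$ with $\gamma\subseteq N$ satisfies $I\subseteq N$ by primality. Thus the maximal $\ast$-ideals sitting over $\gamma$ are a subset of those sitting over $I$. This immediately yields (1)$(\Leftarrow)$: if $I$ lies in a unique maximal $\ast$-ideal $M$, then every $\gamma\in Span(I)$ is contained in $M$, so any two $\gamma_{1},\gamma_{2}\in Span(I)$ satisfy $(\gamma_{1},\gamma_{2})^{\ast}\subseteq M\neq D$, showing $I$ is $\ast$-homogeneous. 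Part (2) is then immediate: $x\in M$ gives $(x,I)^{\ast}\subseteq M$, so $x\in M(I)$; conversely, $(x,I)^{\ast}\neq D$ forces $(x,I)$ into some maximal $\ast$-ideal which must equal $M$ by uniqueness, so $x\in M$.

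For (3), in the forward direction I would suppose $(I,J)^{\ast}=D$ and $\gamma\in Span(I)\cap Span(J)$, picking $n,m$ with $I^{n},J^{m}\subseteq\gamma$. Expanding $(I+J)^{n+m}$ shows $(I^{n},J^{m})^{\ast}=D$ (every monomial of degree $n+m$ in $I$ and $J$ lies in $I^{n}$ or $J^{m}$), forcing $\gamma^{\ast}=D$, contradicting $\gamma\in\Gamma\subseteq f_{\ast}(D)$ being a proper $\ast$-ideal of finite type. For the converse I would apply the hypothesis $f_{\ast}(D)\vartriangleleft^{t}P$ to the proper ideal $(I,J)^{\ast}\in f_{\ast}(D)$ to obtain $\gamma\in\Gamma$ with $((I,J)^{\ast})^{N}\subseteq\gamma$, and then note $I^{N},J^{N}\subseteq(I+J)^{N}\subseteq\gamma$, placing $\gamma$ in both $Span(I)$ and $Span(J)$.

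The hardest step is (1)$(\Rightarrow)$, which I would prove by contrapositive. If $I\subseteq M_{1}\cap M_{2}$ for distinct maximal $\ast$-ideals, then $(M_{1}+M_{2})^{\ast}=D$ by maximality; since $\ast$ has finite character I can extract a finitely generated witness $F\subseteq M_{1}+M_{2}$ with $F^{\ast}=D$, and decomposing its generators as sums $a_{i}+b_{i}$ with $a_{i}\in M_{1}, b_{i}\in M_{2}$ produces finitely generated $A\subseteq M_{1}$, $B\subseteq M_{2}$ with $(A,B)^{\ast}=D$. Setting $I_{1}=I+A$ and $I_{2}=I+B$, these are finitely generated with $(I_{1},I_{2})^{\ast}\supseteq(A,B)^{\ast}=D$, so the hypothesis yields $\gamma_{i}\in\Gamma$ with $I_{i}^{n_{i}}\subseteq\gamma_{i}$; both lie in $Span(I)$ because $I\subseteq I_{i}$, and the $(I+J)^{n+m}$ expansion again gives $(\gamma_{1},\gamma_{2})^{\ast}\supseteq(I_{1}^{n_{1}},I_{2}^{n_{2}})^{\ast}=D$, producing two $\ast$-comaximal members of $Span(I)$ and breaking $\ast$-homogeneity. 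Part (4) then follows cleanly: if $(I,J)^{\ast}\neq D$, the contrapositive of (3) yields $\gamma\in Span(I)\cap Span(J)$; the key observation, applied twice, says every maximal $\ast$-ideal containing $\gamma$ contains both $I$ and $J$, and by (1) each lies in a unique such ideal, forcing $M(I)=M(J)$. The principal obstacle is the finite-character manipulation in (1)$(\Rightarrow)$: converting the structural $\ast$-comaximality of $M_{1},M_{2}$ into an honest pair of $\ast$-comaximal finitely generated ideals that can be fed to $f_{\ast}(D)\vartriangleleft^{t}P$.
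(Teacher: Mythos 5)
Your proof is correct and follows essentially the same route as the paper: the key observation that any $\gamma\in Span(I)$ pulls every maximal $\ast$-ideal over $\gamma$ back to one over $I$, the use of comaximality of powers via the $(I+J)^{n+m}$ expansion, and the application of $f_{\ast}(D)\vartriangleleft^{t}P$ to ideals of the form $(I,x)$ sitting inside the two competing maximal $\ast$-ideals. The only (harmless) variations are that in (1)$(\Rightarrow)$ you build a symmetric pair $A\subseteq M_{1}$, $B\subseteq M_{2}$ from a finitely generated witness of $(M_{1}+M_{2})^{\ast}=D$ where the paper uses the asymmetric witness $(x,x_{1},\ldots,x_{r})^{\ast}=D$ with $x\in M_{1}\setminus M_{2}$, and in (4) you bypass the paper's intermediate claim that $\gamma$ is itself $\ast$-homogeneous by noting directly that any maximal $\ast$-ideal over $\gamma$ must contain both $I$ and $J$.
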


\begin{proof}
Suppose that $I$ is a $\ast $-homogeneous ideal and that $I$ is contained in
two maximal $\ast $-ideals $M_{1},M_{2}.$ Since $M_{i}$ are distinct we can
assume that there is $x\in M_{1}\backslash M_{2}$ and so $(x,M_{2})^{\ast
}=D $ or $(x,x_{1},...,x_{r})^{\ast }=D,$ where $x_{1},...,x_{r}\in M_{2}.$
This gives us $(I,x)^{\ast }\subseteq M_{1}$ and $(I,x_{1},...,x_{r})^{\ast
}\subseteq M_{2}.$ Now, by definition, there is a natural number $m$ such
that $I^{m}\subseteq (I,x)^{m}\subseteq \gamma _{1}\in \Gamma $ and another
natural number $n$ such that $I^{n}\subseteq
(I,x_{1},...,x_{r})^{n}\subseteq \gamma _{2}\in \Gamma .$ But then, $Span(I)$
contains two $\ast $-comaximal ideals, $\gamma _{1},\gamma _{2},$ a
contradiction. Conversely suppose that $I$ is contained in a unique maximal
ideal $M$. Then every factor of every power of $I$ is contained in $M$ and
so no pair of those factors can be $\ast $-comaximal. For part (2) let $%
I\subseteq M$ and let, for $x\in D,$ $(x,I)^{\ast }\neq D.$ This requires
that $(x,I)^{\ast }$ is contained in some maximal $\ast $-ideal, but $M$ is
the only maximal $\ast $-ideal that can contain $I,$ whence $x\in M.$ For
(3) we prove the contrapositive i.e. $Span(I)\cap Span(J)\neq \phi
\Leftrightarrow (I,J)^{\ast }\neq D.$ Now if $Span(I)\cap Span(J)\neq \phi ,$
then there is $H\in \Gamma $ such that $I^{n},J^{m}\subseteq H.$ But as $H$
is a proper ($\ast $-) ideal, $(I,J)^{\ast }\neq D.$ Conversely suppose $%
(I,J)^{\ast }\neq D$. Then, since we are working in a domain $D$ with $%
f_{\ast }\mathcal{(}D\mathcal{)}\vartriangleleft ^{t}P,$ there is a natural
number $n$ and a $\gamma \in \Gamma $ such that $(I,J)^{n}\subseteq \gamma .$
But this gives $I^{n},J^{n}\subseteq \gamma ,$ which means $Span(I)\cap
Span(J)\neq \phi .$ Finally for (4) note that if $(I,J)^{\ast }\neq D$ then,
by definition, $(I,J)^{n}\subseteq \gamma ,$ for some natural number $n$ and
for some $\gamma \in \Gamma .$ This gives $\gamma \in $ $Span(I)\cap
Span(J). $ Forcing $\gamma $ to be $\ast $-homogeneous and hence contained
in a unique maximal $\ast $-ideal, $M(\gamma )$. But then it is easy to see
that $M(I)=M(\gamma )=M(J).$
\end{proof}

\begin{theorem}
\label{Theorem ZC5} Given that $D$ is such that $f_{\ast }\mathcal{(}D%
\mathcal{)}\vartriangleleft ^{t}P$ and $D$ satisfies Conrad's twisted
condition $(F^{t}).$ For each $I\in f_{\ast }\mathcal{(}D\mathcal{)},$ $I$
is contained in at most a finite number of maximal $\ast $-ideals.
Conversely if $D$ is of finite $\ast $-character then $D$ satisfies $(F^{t})$
\end{theorem}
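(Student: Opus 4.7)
My plan is to handle the two directions separately. For the converse, assume $D$ has finite $\ast$-character. Given $I\in f_{\ast}(D)$, the set $\{M_{1},\ldots,M_{k}\}$ of maximal $\ast$-ideals containing $I$ is finite. Any $\gamma\in Span(I)$ contains some power $I^{n}$, and since maximal $\ast$-ideals are prime, every maximal $\ast$-ideal containing $\gamma$ must contain $I$ and so lies in $\{M_{1},\ldots,M_{k}\}$. In a pairwise $\ast$-comaximal family inside $Span(I)$, the nonempty sets of maximal $\ast$-ideals containing each member are pairwise disjoint subsets of a $k$-element set, so the family has at most $k$ members; thus $(F^{t})$ holds.

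For the forward direction I argue by contradiction. Suppose $I\in f_{\ast}(D)$ lies in infinitely many maximal $\ast$-ideals; the goal is to exhibit an infinite pairwise $\ast$-comaximal family inside $Span(I)$, contradicting $(F^{t})$. By Zorn's lemma applied to the poset of pairwise $\ast$-comaximal subsets of $Span(I)$ (ordered by inclusion; the union of a chain is again pairwise $\ast$-comaximal), take a maximal such family $\mathcal{N}$. If $\mathcal{N}$ is infinite we are done, so assume $\mathcal{N}=\{\gamma_{1},\ldots,\gamma_{n}\}$ is finite and, for each $i$, let $T_{i}$ denote the set of maximal $\ast$-ideals containing $\gamma_{i}$; by $\ast$-comaximality the $T_{i}$ are pairwise disjoint.

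The core step is to enlarge $\mathcal{N}$, contradicting maximality, in two cases. If some $T_{i}$ is infinite, then $\gamma_{i}$ lies in more than one maximal $\ast$-ideal, so by Lemma \ref{Lemma ZC4}(1) it is not $\ast$-homogeneous and $Span(\gamma_{i})\subseteq Span(I)$ contains two $\ast$-comaximal elements $\gamma_{i}',\gamma_{i}''$. Each of $\gamma_{i}',\gamma_{i}''$ is contained only in maximal $\ast$-ideals belonging to $T_{i}$, which is disjoint from every $T_{j}$ with $j\neq i$; hence both are $\ast$-comaximal with every $\gamma_{j}$, and replacing $\gamma_{i}$ by the pair $\gamma_{i}',\gamma_{i}''$ enlarges $\mathcal{N}$. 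Otherwise every $T_{i}$ is finite, so $T=\bigcup_{i}T_{i}$ is finite; pick a maximal $\ast$-ideal $M_{0}\supseteq I$ with $M_{0}\notin T$, and by ordinary (finite) prime avoidance choose $x\in M_{0}$ with $x\notin N$ for every $N\in T$. Applying $f_{\ast}(D)\vartriangleleft^{t}P$ to the finite-type $\ast$-ideal $(I,x)^{\ast}$ yields a power $(I,x)^{m}\subseteq\gamma_{\star}\in\Gamma$, so $\gamma_{\star}\in Span(I)$; any maximal $\ast$-ideal containing $\gamma_{\star}$ must contain $x$ and hence lies outside $T$, making $\gamma_{\star}$ $\ast$-comaximal with every $\gamma_{i}$, so adjoining $\gamma_{\star}$ to $\mathcal{N}$ again contradicts maximality.

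The main obstacle I anticipate is that finite prime avoidance is unavailable when some $T_{i}$ is infinite, so $x$ cannot be chosen to avoid the (possibly infinitely many) maximal $\ast$-ideals containing the previously constructed $\gamma_{i}$; Lemma \ref{Lemma ZC4}(1) bypasses this by splitting $\gamma_{i}$ internally within $Span(\gamma_{i})\subseteq Span(I)$, never leaving the scope of $I$. Together the two cases deliver the required contradiction and complete the forward direction.
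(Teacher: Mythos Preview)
Your proof is correct, and your converse is essentially the paper's argument. The forward direction, however, follows a genuinely different route.

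The paper proceeds in two stages. First it shows, via a ``doubling'' argument, that $Span(I)$ must contain at least one $\ast$-homogeneous element: if not, every element of $Span(I)$ splits into two $\ast$-comaximal pieces, and iterating produces $2^n$ mutually $\ast$-comaximal members at stage $n$, violating $(F^{t})$. Second, it partitions the $\ast$-homogeneous elements of $Span(I)$ into equivalence classes under ``non-$\ast$-comaximal with'', picks representatives $h_1,\dots,h_l$ (finitely many by $(F^{t})$), and then argues directly that $M(h_1),\dots,M(h_l)$ are \emph{exactly} the maximal $\ast$-ideals containing $I$---the prime-avoidance step you use in Case~2 appears here too, but only after the $\ast$-homogeneous structure has been set up.

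Your argument bypasses the explicit construction of $\ast$-homogeneous representatives: Zorn's lemma hands you a maximal $\ast$-comaximal family $\mathcal{N}$ outright, and the case split (some $T_i$ infinite versus all $T_i$ finite) lets you enlarge $\mathcal{N}$ either by splitting a non-homogeneous $\gamma_i$ internally or by adjoining a fresh $\gamma_\star$ via prime avoidance. This is a cleaner contradiction machine and avoids the inductive bookkeeping of the doubling step. What the paper's approach buys in exchange is a more explicit description: it actually identifies the maximal $\ast$-ideals over $I$ as the $M(h_i)$ attached to the $\ast$-homogeneous classes, which is structurally informative even if not needed for the bare statement.
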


\begin{proof}
We first show that for every $I\in $ $f_{\ast }\mathcal{(}D\mathcal{)}$,
there is a positive integer $n$ such that $I^{n}$ is contained in some $\ast 
$-homogeneous ideal $J\in \Gamma .$ That is for every $I\in f_{\ast }%
\mathcal{(}D\mathcal{)},$ $Span(I)$ contains at least one $\ast $%
-homogeneous member of $\Gamma $. Suppose that for some $I\in $ $f_{\ast }%
\mathcal{(}D\mathcal{)}$, $Span(I)$ does not contain any $\ast $-homogeneous
ideal. Then $Span(I)$ contains at least two $\ast $-comaximal members.
Noting that each of the two $J_{1},J_{2}$ has at least two $\ast $-comaximal
members of $\Gamma $ in its span, we conclude that there are at least four
mutually $\ast $-comaximal members of $\Gamma $ say $J_{i1},J_{i2}\in
Span(J_{i}),$ in view of the fact that $\cap Span(J_{i})=\phi .$ Also since $%
Span(J_{i})\subseteq Span(I)$ we conclude that at stage $2$ $Span(I)$
contains at least four $=2^{2}$ mutually $\ast $-comaximal members of $%
\Gamma $. Since the spans of the resulting four mutually $\ast $-comaximal
members are mutually disjoint, contained in $Span(I)$ are mutually disjoint
and since, by the condition, span of each of these four contains at least
two mutually $\ast $-comaximal members of $\Gamma ,$ we conclude that at
stage $3$ $Span(I)$ has at least $2^{3}$ mutually $\ast $-comaximal members
of $\Gamma $. Proceeding thus, we may assume that at stage $n-1$ there are
at least $2^{n-1}$ mutually $\ast $-comaximal members of $\Gamma $, $%
K_{1},K_{2}....,K_{2^{n-1}}$ in $Span(I).$ Again, noting that $Span(K_{i})$
are mutually disjoint and because $Span(I)$ contains no $\ast $-homogeneous
members of $\Gamma $, each of $Span(K_{i})$ contains at least two $\ast $%
-comaximal $\ast $-invertible $\ast $-ideals. But then, at stage $n,~Span(I)$
contains at least $2^{n}$ mutually $\ast $-comaximal members. Now because of
the assumption that $Span(I)$ contains no $\ast $-homogeneous members of $%
\Gamma ,$ this process is never ending and forces $Span(I)$ to have
infinitely many mutually $\ast $-comaximal members from $\Gamma $. But this
is contrary to the twisted $(F^{t})$ condition of Conrad's. Whence, for some 
$L$ in $Span(I),$ $Span(L)$ contains a $\ast $-homogeneous ideal $J.$ But as 
$I^{m}\subseteq L$ for some $m$ and $L^{n}\subseteq J$ for some $n$ we
conclude that $I^{mn}\subseteq J.$

Next, let $H(I)$ be the set of all $\ast $-homogeneous ideals contained in $%
Span(I)$ and note that "is non $\ast $-comaximal with" is an equivalence
relation on $H(I)$ (this equivalence relation splits $H(I)$ into equivalence
classes) and that members in distinct classes are mutually $\ast $%
-comaximal. Now pick one $\ast $-homogeneous ideal from each class to get a
set of mutually $\ast $-comaximal $\ast $-homogeneous ideals. By Conrad's
twisted condition there must be a finite number $l$ of $\ast $-homogeneous
mutually $\ast $-comaximal members in $Span(I),$ say $h_{1},h_{2},...,h_{l}$
and by construction this number is exact.

Let $M(h_{i})$ be the maximal $\ast $-ideal containing $h_{i},$ for $%
i=1,...,l.$ Claim that $\{M(h_{i})\}$ are the only maximal $\ast $-ideals
that contain $I$. For if not and if $M$ is another maximal $\ast $-ideal
containing $I,$ then $M$ contains $(I,x)$ where $x\in M\backslash \cup
\{M(h_{i})\}.$ But then, as we have shown above, there is a $\ast $%
-homogeneous ideal $\gamma \in \Gamma $ and a natural number $n$ such that $%
(I,x)^{n}\subseteq \gamma .$ This forces $\gamma \in Span(I).$ But then $%
\gamma $ has to be non- $\ast $-comaximal to one of $h_{i}$ and hence in $%
M(h_{i}).$ But that is impossible because $x\notin M(h_{i}).$ Consequently, $%
I$ is contained exactly in $M(h_{1}),...,M(h_{l}).$

Conversely suppose that $D$ is of finite $\ast $-character, then we show
that for any $\ast $-ideal $I$ of finite type $Span(I)$ contains at most a
finite number of mutually $\ast $-comaximal $\ast $-ideals from $\Gamma .$
Suppose on the contrary that $Span(I)$ contains infinitely many mutually $%
\ast $-comaximal members of $\Gamma $ and let $\{\gamma _{i}\}$ be a list of
the mutually $\ast $-comaximal elements of $\Gamma $ contained in $Span(I).$
If $M_{\gamma _{i}}$ is a maximal $\ast $-ideal containing $\gamma _{i}$
then since $I^{\alpha _{i}}\subseteq \gamma _{i}$ $\subseteq M_{\gamma _{i}}$
we have $I\subseteq M_{\gamma _{i}}.$ Now as $M_{\gamma _{i}}$ cannot
contain two $\ast $-comaximal ideals, all the $M_{\gamma _{i}}$ are
distinct. Thus we end up with an infinite set of distinct maximal $\ast $%
-ideals containing $I.$ But this contradicts the assumption that $D$ is of
finite $\ast $-character. Hence $Span(I)$ contains only a finite number of
mutually $\ast $-comaximal ideals from $\Gamma .$
\end{proof}

\bigskip

\begin{corollary}
\label{Corollary ZC6} Let $D$ be a $\ast $-SAB domain. Then $D$ is of finite 
$\ast $-character if and only if every power of a proper $\ast $-ideal of
finite type is contained in an at most a finite number of mutually $\ast $%
-comaximal members of $\Gamma .$
\end{corollary}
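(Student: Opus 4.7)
The plan is to reduce Corollary~\ref{Corollary ZC6} to Theorem~\ref{Theorem ZC5} by identifying the hypotheses correctly. First I would observe that a $\ast$-SAB domain $D$ is precisely an infra $\ast$-AB$\Gamma$ domain with $\Gamma$ the set of proper nonzero principal ideals of $D$ (this is the content of part (15) of Lemma~\ref{Lemma ZC}); equivalently, $f_{\ast}(D) \vartriangleleft^{t} P$ for $P = $ ``--- is a proper nonzero principal ideal''. So the general machinery of Lemma~\ref{Lemma ZC4} and Theorem~\ref{Theorem ZC5} applies to $D$ with this choice of $\Gamma$.

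Next I would unpack the hypothesis of the corollary. The condition ``every power of a proper $\ast$-ideal of finite type is contained in at most a finite number of mutually $\ast$-comaximal members of $\Gamma$'' says that for each proper $I \in f_{\ast}(D)$, the set $\mathrm{Span}(I) = \{\gamma \in \Gamma \mid I^n \subseteq \gamma \text{ for some } n \geq 1\}$ contains only finitely many mutually $\ast$-comaximal elements. But this is verbatim Conrad's twisted condition $(F^{t})$ as defined immediately before Lemma~\ref{Lemma ZC4}.

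With these two identifications in place, the corollary becomes a straightforward rephrasing of Theorem~\ref{Theorem ZC5}. For the ``if'' direction, assume $D$ satisfies $(F^{t})$; then Theorem~\ref{Theorem ZC5} yields that every $I \in f_{\ast}(D)$ is contained in at most finitely many maximal $\ast$-ideals, and in particular each nonzero nonunit $x \in D$ (viewed via the principal $\ast$-ideal $(x)^{\ast}$) lies in only finitely many maximal $\ast$-ideals, so $D$ is of finite $\ast$-character. For the ``only if'' direction, assume $D$ is of finite $\ast$-character; the converse half of Theorem~\ref{Theorem ZC5} delivers exactly $(F^{t})$, which is our stated condition.

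The only point requiring any care, and the closest thing to an obstacle, is making sure the chosen $\Gamma$ (proper nonzero principals) indeed fits the framework of Theorem~\ref{Theorem ZC5}: one must check that the hypothesis $f_{\ast}(D) \vartriangleleft^{t} P$ is delivered by the $\ast$-SAB property, not by something stronger. This is precisely what the definition of $\ast$-SAB provides, since for each finitely generated $I$ a power $I^{s}$ sits in a principal ideal $J \in \Gamma$; there is no need here to use the refinement that $J$ lies in every prime $\ast$-ideal containing $I$. Once this is noted, the proof is essentially a two-line deduction from Theorem~\ref{Theorem ZC5}.
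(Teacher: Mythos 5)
Your proposal is correct and follows the paper's intended route exactly: the paper's one-line proof of Corollary \ref{Corollary ZC6} consists precisely of the observation (Lemma \ref{Lemma ZC}(15)) that a $\ast$-SAB domain is an infra $\ast$-AB$\Gamma$ domain with $\Gamma$ the set of proper nonzero principal ideals, i.e. $f_{\ast}(D)\vartriangleleft^{t}P$, after which both directions are read off from Theorem \ref{Theorem ZC5}. Your identification of the corollary's hypothesis with Conrad's twisted condition $(F^{t})$, and your remark that the ``restricted'' clause in the $\ast$-SAB definition is not needed here, are both accurate.
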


\begin{proof}
The proof follows from the fact that a $\ast $-SAB domain domain is a
special case of infra $\ast $-AB$\Gamma $ domains.
\end{proof}

Our aim now is to record the consequences of the following statement: An
integral domain $D$ is of finite $\ast $-character if and only if (a) every $%
\ast $-locally finitely generated $\ast $-ideal of $D$ is of finite type if,
and only if, (b) Every nonzero $\ast $-ideal $A$ of finite type of $D$ is
power divisible by at most a finite number of mutually $\ast $-comaximal
members of $\Gamma .$ Here an ideal $A$ is $\ast $-locally finitely
generated if $AD_{M}$ is finitely generated for each maximal $\ast $-ideal $%
M $ of $D.$

\begin{lemma}
\label{Lemma ZC7} Let $M$ be a maximal $t$-ideal of an integral domain $D$
and suppose that $M$ has the property that every nonzero finitely generated
proper ideal $A$ of $D$ is strictly power divisible by a $t$-invertible $t$%
-ideal of $D$ contained in $M$. Then $MD_{M}$ is a $t$-ideal.
\end{lemma}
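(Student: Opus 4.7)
The plan is to verify the defining property of a $t$-ideal directly in $D_M$: I will show that for every nonzero finitely generated ideal $F$ of $D_M$ with $F\subseteq MD_M$, the $v$-envelope $F_v$ (computed in $D_M$) is again contained in $MD_M$.

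First I would pull $F$ back to $D$. If $F=(m_1/s_1,\dots,m_n/s_n)D_M$ with $m_i\in M$ and $s_i\in D\setminus M$, then each $s_i$ is a unit of $D_M$, so $F=(m_1,\dots,m_n)D_M=BD_M$, where $B=(m_1,\dots,m_n)$ is a nonzero finitely generated ideal of $D$ sitting inside $M$. The standing hypothesis on $M$ then supplies an integer $k\geq 1$ and a $t$-invertible $t$-ideal $J$ of $D$ with $B^k\subseteq J\subseteq M$.

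The second step is to upgrade $J$ to a principal ideal of the local ring $D_M$. Since $J$ is $t$-invertible, $(JJ^{-1})_t=D$, so $JJ^{-1}$ cannot be trapped in any maximal $t$-ideal; in particular $JJ^{-1}\not\subseteq M$, and hence $(JD_M)(J^{-1}D_M)=D_M$. Thus $JD_M$ is invertible, and so principal, in $D_M$; write $JD_M=jD_M$ with $j\in MD_M$. Consequently $F^k=B^kD_M\subseteq jD_M$, and since principal ideals are $v$-closed,
\[
(F_v)^k\subseteq ((F_v)^k)_v=(F^k)_v\subseteq (jD_M)_v=jD_M\subseteq MD_M.
\]
Primality of $MD_M$ in $D_M$ then forces $F_v\subseteq MD_M$, which is the desired conclusion.

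The main obstacle I expect is the passage from $J$ being $t$-invertible in $D$ to $JD_M$ being invertible (hence principal) in $D_M$; this is standard but hinges on the observation that $JJ^{-1}$, having $t$-closure $D$, must escape every maximal $t$-ideal, and in particular escapes $M$. The remainder is routine bookkeeping: clearing denominators, using the identity $(A_v)^k\subseteq (A^k)_v$ for the $v$-operation, and invoking primality of the maximal ideal of $D_M$.
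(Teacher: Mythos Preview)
Your proof is correct and follows essentially the same route as the paper's: pull a finitely generated subideal of $MD_M$ back to a finitely generated $B\subseteq M$, invoke the hypothesis to get $B^k\subseteq J\subseteq M$ with $J$ a $t$-invertible $t$-ideal, and use that $JD_M$ is then principal (hence $v$-closed) in $D_M$ to trap $(F^k)_v$ inside $MD_M$. The paper phrases this as a proof by contradiction and leaves the step ``$JD_M$ is principal'' implicit, whereas you argue directly and spell out that step via $JJ^{-1}\not\subseteq M$; otherwise the arguments coincide.
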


\begin{proof}
Suppose that $MD_{M}$ is not a $t$-ideal. Then for a finitely generated $%
F\subseteq MD_{M}$ we have $F=fD_{M}\subseteq MD_{M},$ where $f$ is a
finitely generated nonzero ideal contained in $M,$ such that, for $v_{1}=v$%
-operation in $D_{M},$ $%
F_{v_{1}}=(fD_{M})_{v_{1}}=(f_{v}D_{M})_{v_{1}}=D_{M}=$ $%
(F^{m})_{v_{1}}=(f^{m}D_{M})_{v_{1}}=(f_{v}^{m}D_{M})_{v_{1}}=D_{M},$ for
all natural numbers $m.$ On the other hand, if $f=(x_{1},...,x_{n}),$ then $%
f^{r}\subseteq I$ where $I$ is a $t$-invertible $t$-ideal contained in $M.$
But then $(F^{s})=(f^{s}D_{M})\subseteq ID_{M}\subseteq MD_{M}$, which
forces $(F^{s})_{v_{1}}=(f^{s}D_{M})_{v_{1}}=(f_{v}^{m}D_{M})_{v_{1}}%
\subseteq MD_{M},$ a contradiction.
\end{proof}

Noting that if $D$ is an SAP (i.e., a $d$-SAP) domain every prime ideal is a 
$t$-ideal the following theorem can be stated for both $d$- and $t$-SAP
domains.

\bigskip

\begin{theorem}
\label{Theorem ZC8} Let $D$ be a $\ast $-SAP domain. If every nonzero $\ast $%
-ideal of $D$ that is $\ast $-locally finitely generated is of finite type,
every proper nonzero principal ideal of $D$ is power divisible by at most a
finite number of mutually $\ast $-comaximal ideals of finite type.
\end{theorem}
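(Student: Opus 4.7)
The plan is to argue by contradiction. Suppose some proper nonzero principal ideal $xD$ is power divisible by an infinite family $\{J_i\}_{i\in\mathbb{N}}$ of mutually $\ast$-comaximal $\ast$-ideals of finite type, with $x^{n_i}\in J_i$ for each $i$. Using the $\ast$-SAP hypothesis I may replace each $J_i$ by a $\ast$-invertible $\ast$-ideal sitting inside the same (or fewer) prime $\ast$-ideals; so I may assume each $J_i$ is $\ast$-invertible. By $\ast$-comaximality (compare Lemma~\ref{Lemma ZC4}), the sets of maximal $\ast$-ideals containing different $J_i$ are pairwise disjoint, so picking $M_i \supseteq J_i$ produces infinitely many distinct maximal $\ast$-ideals, each containing $x$.

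The goal is to construct a $\ast$-ideal $A\subseteq D$ that is $\ast$-locally finitely generated but not of finite type, contradicting the hypothesis. I take
\[
A \;=\; \Bigl(\,\textstyle\sum_{i\ge1}x^{n_i}J_i^{-1}\,\Bigr)^{\!\ast},
\]
which lies in $D$ because $x^{n_i}J_i^{-1}\subseteq (J_iJ_i^{-1})^{\!\ast}=D$. To verify $AD_M$ is finitely generated at each maximal $\ast$-ideal $M$: at most one index $i_0$ satisfies $J_{i_0}\subseteq M$, since the $J_i$ are pairwise $\ast$-comaximal. For $i\ne i_0$ one has $J_iD_M=D_M$, whence $x^{n_i}J_i^{-1}D_M=x^{n_i}D_M$, and $\sum_{i\ne i_0}x^{n_i}D_M = x^{n_{\min}}D_M$ with $n_{\min}=\min_{i\ne i_0}n_i$. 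Together with the finite-type term $x^{n_{i_0}}J_{i_0}^{-1}D_M$, this makes $AD_M$ finitely generated.

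By the hypothesis, $A=F^{\!\ast}$ for some finitely generated $F$; since the generators of $F$ involve only finitely many summands, $F\subseteq\sum_{i\in S_0}x^{n_i}J_i^{-1}$ for some finite $S_0\subset\mathbb{N}$, and hence $A\subseteq\bigl(\sum_{i\in S_0}x^{n_i}J_i^{-1}\bigr)^{\!\ast}$. For any $i'\notin S_0$, the containment $x^{n_{i'}}J_{i'}^{-1}\subseteq \bigl(\sum_{i\in S_0}x^{n_i}J_i^{-1}\bigr)^{\!\ast}$, localised at a maximal $\ast$-ideal $M$ containing $J_{i'}$, becomes $x^{n_{i'}}J_{i'}^{-1}D_M\subseteq x^{n^*}D_M$ with $n^*=\min_{i\in S_0}n_i$. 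After passing to an infinite subfamily on which the minimal exponents agree, so that $n_{i'}=n^*$, the left-hand side strictly exceeds $x^{n_{i'}}D_M=x^{n^*}D_M$ because $J_{i'}D_M\subsetneq D_M$ forces $J_{i'}^{-1}D_M\supsetneq D_M$. This is the required contradiction.

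The main obstacle is the final localisation step: one must verify that $\ast$-closure commutes with localisation at $M$ for the finite sum in question (automatic for the $w$-operation and, via Lemma~\ref{Lemma ZC7}, amenable for $\ast=t$ on finite-type $t$-ideals), and one must normalise the exponents $n_i$ so the strict inclusion $D_M\subsetneq J_{i'}^{-1}D_M$ translates into a genuine failure of the desired containment at some well-chosen $i'$. When the minimal exponents are unbounded along the original family, a mild variant of the construction (for instance a truncation or weighting of the summands) is required to recover the contradiction.
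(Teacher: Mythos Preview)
Your overall strategy---assume an infinite family of mutually $\ast$-comaximal $\ast$-invertible divisors of powers of $x$, then build a $\ast$-ideal that is $\ast$-locally finitely generated but not of finite type---is exactly the paper's. But the execution diverges in ways that leave real gaps, most of which you flag yourself without closing.

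First, the paper does not take the $\ast$-closure of the sum; it takes the $\ast_w$-closure (so $w$ when $\ast=t$, and $d$ when $\ast=d$). This is not cosmetic: it is what makes $B_{\ast_w}D_M=BD_M$ automatic, so that both the ``$\ast$-locally finitely generated'' verification and the final localisation step go through cleanly. With your $(\cdot)^{\ast}$ you must argue separately that $A D_M$ is finitely generated (you only compute $(\sum\dots)D_M$, not $A D_M$) and that the finite partial sum's $\ast$-closure localises as claimed; Lemma~\ref{Lemma ZC7} does not do that work for you.

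Second, the paper uses cumulative products: in the bounded-exponent case it forms the ascending union $\sum_j x^{n}(\prod_{i\le j}I_{\alpha_i})^{-1}$, and in the unbounded case it groups the $I_\alpha$ into blocks $J_k$ and builds $B=(\sum_k x^{n_k}\prod_{i\le k}J_i^{-1})_{\ast_w}$ with $n_1<n_2<\cdots$. Your simpler sum $\sum_i x^{n_i}J_i^{-1}$ avoids this, but at a cost: in the unbounded case your contradiction step requires normalising exponents so that $n_{i'}=n^{*}$, and you concede that ``a mild variant of the construction'' is then needed. The paper's cumulative construction is precisely that variant, and it also comes with an explicit verification that $B\neq D$ (which you omit) and that $B$ is not of finite type.

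In short: right plan, but to make it a proof you need the $\ast_w$-closure in place of $\ast$, the cumulative-product ascending construction, the bounded/unbounded case split, and the check that the constructed ideal is proper.
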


\begin{proof}
Suppose that $A$ is power divisible by an infinite set $\{I_{\alpha }\}$ of
mutually $\ast $-comaximal $\ast $-invertible $\ast $-ideals. Choose an $%
x\in A\backslash \{0\}.$ Let $n_{\alpha }$ be, the smallest such that $%
x^{n_{\alpha }}D\subseteq I_{\alpha }.$ Two cases are possible: (a) there is 
$n\geq 1$ such that $x^{n}D\subseteq I_{\alpha _{i}},$ for infinitely many $%
\alpha _{i}$ and (b) there is no fixed $n$ such that $x^{n}D\subseteq
I_{\alpha _{i}}$ for infinitely many $\alpha _{i}.$ In case (a) using \cite[%
Lemmas 1,2 and Proposition 4]{Zaf 2010} set $B=\sum_{j=1}^{\infty }x^{n}(\Pi
_{i=1}^{i}I_{\alpha _{i}})^{-1}$ to get a $\ast $-locally principal ideal
that is a $\ast $-ideal, being an ascending union $\dbigcup x^{n}(\Pi
_{i=1}^{i}I_{\alpha _{i}})^{-1}$ of $\ast $-ideals. Yet $B$ is not a $\ast $%
-ideal of finite type, as established in Proposition 4 of \cite{Zaf 2010}.
For case (b), note that there is no single power $n$ of $x$ such that an
infinite number of $I_{\alpha }$ contains $x^{n},$ for then (a) applies. So
for a fixed power $x^{n}$ there are only finitely many $I_{\alpha }$ that
divide $x^{n}.$ Also as $I_{\alpha }$ are mutually $\ast $-comaximal, if $%
x^{n}\subseteq I_{\alpha _{i}}$ for $i=1,...,j$, then $x^{n}D\subseteq
(\dprod\limits_{i=1}^{j}I_{\alpha _{i}})^{\ast }.$ Now let $n_{1}$ be the
least positive integer such that $x^{n_{1}}$ is contained in at least one of
the $I_{\alpha }$ and let $\{I_{\alpha _{n_{1}1}},I_{\alpha
_{n_{1}2}},...,I_{\alpha _{n_{1}j_{1}}}\}$ be the set of all the members of $%
\{I_{\alpha }\}$ that contain $x^{n_{1}}D.$ Then, as we have seen above, $%
x^{n_{1}}D\subseteq (\dprod\limits_{i=1}^{j_{1}}I_{\alpha _{n_{1}i}})^{\ast
}=J_{1}.$ Next set $n_{2}$ as the least such integer that $%
x^{n_{2}}D\subseteq (\dprod\limits_{i=1}^{j_{1}}I_{\alpha _{n_{1}i}})^{\ast
} $ and at least one ideal from $\{I_{\alpha }\}\backslash \{I_{\alpha
_{n_{1}1}},I_{\alpha _{n_{1}2}},...,I_{\alpha _{n_{1}j_{1}}}\}.$ Let $%
\{I_{\alpha _{n_{2}1}},I_{\alpha _{n22}},...,I_{\alpha _{n_{2}j_{2}}}\}$ be
the set of all the members of $\{I_{\alpha }\}\backslash \{I_{\alpha
_{n_{1}1}},I_{\alpha _{n_{1}2}},...,I_{\alpha _{n_{1}j_{1}}}\}$ that contain 
$x^{n_{2}}D.$ Thus giving us $x^{n_{2}}D\subseteq
(\dprod\limits_{i=1}^{j_{1}}I_{\alpha
_{n_{1}i}}\dprod\limits_{i=1}^{j_{2}}I_{\alpha _{n_{2}i}})^{\ast
}=J_{1}J_{2}.$ Continuing thus and setting up $J_{k}=(\dprod%
\limits_{i=1}^{j_{k}}I_{\alpha _{n_{k}i}})^{\ast }$ we get $%
x^{n_{k}}D\subseteq J_{1}J_{2}...J_{k}$ and so $x^{n_{k}}\dprod%
\limits_{i=1}^{k}J_{i}^{-1}\subseteq D.$ It is easy to see that $%
n_{1}\,<n_{2}<...<n_{k}<n_{k+1}....$

Set $B=(\dsum\limits_{k=1}^{\infty
}x^{n_{k}}\dprod\limits_{i=1}^{k}J_{i}^{-1})_{\ast _{w}}$ (Note that as we
are only dealing with $t$- and $d$- operations, $\ast _{w}=w$ or $d.)$ Claim 
$B\neq D.$ For $B=D$ implies $D=(\dsum\limits_{r=1}^{s}x^{n_{k_{r}}}\dprod%
\limits_{i=1}^{k_{r}}J_{i}^{-1})_{\ast _{w}}$ for finite $s.$ Assume $%
n_{k_{1}}<...<n_{k_{s}}.$ Then, using the fact that $%
(A_{1,}A_{2},...,A_{r})_{\ast _{w}}=D$ if and only if $%
(A_{1}^{n_{1}},A_{2}^{n_{2}},...,A_{r}^{n_{r}})_{\ast _{w}}=D,$ for any
nonnegative integers $n_{i},$ $D=(\dsum\limits_{r=1}^{s}x^{n_{k_{s}}}(\dprod%
\limits_{i=1}^{k_{r}}J_{i}^{-1})(\dprod%
\limits_{i=1}^{k_{r}}J_{i}^{-1})^{k_{s}-k_{r}})_{\ast
_{w}}=x^{n_{k_{s}}}((\dprod%
\limits_{i=1}^{k_{r_{1}}}J_{i}^{-1})^{k_{s}-k_{1}},(\dprod%
\limits_{i=1}^{k_{r_{2}}}J_{i}^{-1})^{k_{s}-k_{2}},...,(\dprod%
\limits_{i=1}^{k_{r_{s}}}J_{i}^{-1}))_{\ast _{w}}.$ This gives $%
x^{n_{k_{s}}}=((\dprod\limits_{i=1}^{k_{r_{1}}}J_{i})^{k_{s}-k_{1}})^{\ast
}\cap ...\cap (\dprod\limits_{i=1}^{k_{r_{s}}}J_{i})^{\ast }$ forcing $%
x^{n_{k_{s}}}$ to be a $\ast $-product of powers of $J_{i},i=1,2,...,s.$ But
then higher powers of $x$ cannot be divisible by the remaining members of $%
\{I_{\alpha }\}$ which contradicts the assumption. Let's note that $B$ is
not of finite type for if that were the case we would have,say, $%
B=(\dsum\limits_{r=1}^{s}x^{n_{k_{r}}}\dprod%
\limits_{i=1}^{k_{r}}J_{i}^{-1})_{\ast _{w}}$ for finite $s.$ Assume $%
n_{k_{1}}<...<n_{k_{s}}.$ But, by construction, $B\supseteq x^{n_{k_{s+1}}}$ 
$\dprod\limits_{i=1}^{k_{s+1}}J_{i}^{-1}$ or $(\dsum%
\limits_{r=1}^{s}x^{n_{k_{r}}}\dprod\limits_{i=1}^{k_{r}}J_{i}^{-1})_{\ast
_{w}}\supseteq x^{n_{k_{s+1}}}$ $\dprod\limits_{i=1}^{k_{s+1}}J_{i}^{-1}$ or 
$x^{n_{k_{s+1}}-n_{k_{1}}}\subseteq
(\dprod\limits_{i=1}^{k_{s+1}}J_{i}(\dsum%
\limits_{r=1}^{s}x^{n_{k_{r}}-n_{k_{1}}}\dprod%
\limits_{i=1}^{k_{r}}J_{i}^{-1}))_{\ast _{w}}$ or $x^{n_{k_{s+1}}-n_{k_{1}}}%
\subseteq
((\dsum\limits_{r=1}^{s}x^{n_{k_{r}}-n_{k_{1}}}\dprod%
\limits_{i=1}^{k_{s+1}}J_{i}\dprod\limits_{i=1}^{k_{r}}J_{i}^{-1}))_{\ast
_{w}}\subseteq J_{k_{s+1}}$ which is impossible.

On the other hand $B_{\ast _{w}}$ is $\ast $-locally finitely generated, as
we see below. Note that as $MD_{M}$ is a $t$-ideal, only one of th\NEG{e} $%
J_{i}$ can be a non-unit and also as $n_{1}\,<n_{2}<...<n_{k}<n_{k+1}....,$
we conclude that $B_{\ast _{w}}D_{M}=$ $BD_{M}=$ $(\dsum\limits_{k=1}^{%
\infty }x^{n_{k}}\dprod\limits_{i=1}^{k}J_{i}^{-1})D_{M}$ $%
=\sum_{j=1}^{j=k-1}x^{n_{1}}D_{M}+x^{n_{k}}J_{k}^{-1}D_{M}+\sum_{i=1}^{%
\infty }x^{n_{k+i}}J_{k}^{-1}D_{M}=x^{n_{1}}D_{M}+x^{n_{k}}J_{k}^{-1}D_{M}.$
Since $J_{k}$ is a $\ast $-invertible $\ast $-ideal, we have $%
J_{k}^{-1}D_{M} $ principal and thus $B_{w}D_{M}=$ $%
BD_{M}=x^{n_{1}}D_{M}+x^{n_{k}}J_{k}^{-1}D_{M}$ is two generated. Thus $%
B_{w} $ is $t$-locally finitely generated, yet as we have already seen $B_{w}
$ is not of finite type, a contradiction. This contradiction establishes
that if every $w$-ideal that is $t$-locally finitely generated is of finite
type, then every nonzero non-unit is power divisible by at most a finite
number of mutually $\ast $-comaximal $\ast $-invertible $\ast $-ideals.
\end{proof}

\begin{remark}
\label{Remark ZC8A} The proof of Theorem \ref{Theorem ZC8} closely follows
the proof of Lemma 2.2 of \cite{CH 2019}, except that where they "adopt" a
procedure from \cite{Zaf 2010} without reference, I reference \cite{Zaf 2010}
and when they say, in their study of case 2: "As in case 1 let $A=(\sum
a^{n_{i}}J_{i}^{-1})_{w}",$ I first establish that the ideal $A$ is proper.
(Let's put it this way: There's no place for "let" in the middle of an
argument.
\end{remark}

\begin{corollary}
\label{Corollary ZC9} Let $D$ be a $\ast $-SAP domain then $D$ is of finite $%
\ast $-character if and only if every $\ast $-locally finitely generated $%
\ast $-ideal of $D$ is of $\ast $-finite type.
\end{corollary}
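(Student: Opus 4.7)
The plan is to prove the two directions separately, leveraging Theorem \ref{Theorem ZC8} for the nontrivial direction and a classical ``local patching'' argument for the other.

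For the implication ``$\Leftarrow$'': assume every $\ast$-locally finitely generated $\ast$-ideal of $D$ is of $\ast$-finite type. Theorem \ref{Theorem ZC8} then asserts that every proper nonzero principal ideal $xD$ is power divisible by at most finitely many mutually $\ast$-comaximal $\ast$-invertible $\ast$-ideals. Since $D$ is $\ast$-SAP, taking $\Gamma$ to be the set of proper $\ast$-invertible $\ast$-ideals gives $f_{\ast}(D)\vartriangleleft^{t}P$ with $P=\,$``--- is a $\ast$-invertible $\ast$-ideal''. I would verify Conrad's twisted condition $(F^{t})$ for $f_{\ast}(D)$ as follows: for any $I\in f_{\ast}(D)$ pick a nonzero $x\in I$; since $xD\subseteq I$, any $\gamma\in\Gamma$ with $I^{n}\subseteq\gamma$ also satisfies $(xD)^{n}\subseteq\gamma$, whence $Span(I)\subseteq Span(xD)$, and the finiteness delivered by Theorem \ref{Theorem ZC8} transfers to $Span(I)$. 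With $(F^{t})$ in hand, Theorem \ref{Theorem ZC5} yields that each $I\in f_{\ast}(D)$ is contained in only finitely many maximal $\ast$-ideals; specializing to $I=xD$ with $x$ a nonzero nonunit gives finite $\ast$-character.

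For the implication ``$\Rightarrow$'': assume $D$ has finite $\ast$-character and let $A$ be a $\ast$-locally finitely generated $\ast$-ideal. Pick any nonzero $a\in A$; by finite $\ast$-character, only finitely many maximal $\ast$-ideals $M_{1},\ldots,M_{k}$ contain $a$. For each $i$, choose $b_{i1},\ldots,b_{in_{i}}\in A$ whose images generate $AD_{M_{i}}$, and set $F=(a,\, b_{ij}:1\le i\le k,\,1\le j\le n_{i})\subseteq A$. At each maximal $\ast$-ideal $M$ one checks $FD_{M}=AD_{M}$: for $M=M_{i}$ by construction, and for $M\notin\{M_{1},\ldots,M_{k}\}$ because $a\notin M$ becomes a unit in $D_{M}$, so $FD_{M}=D_{M}=AD_{M}$. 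Intersecting over all maximal $\ast$-ideals yields $F^{\ast_{w}}=A^{\ast_{w}}$; since for $\ast\in\{d,t\}$ a $\ast$-ideal satisfies $A=\bigcap_{M}AD_{M}=A^{\ast_{w}}\subseteq A^{\ast}=A$, we get $A=F^{\ast_{w}}\subseteq F^{\ast}\subseteq A^{\ast}=A$, i.e.\ $A=F^{\ast}$, so $A$ is of $\ast$-finite type.

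The main obstacle is the $(\Leftarrow)$ direction: one must recognize that the principal-ideal-level conclusion of Theorem \ref{Theorem ZC8} is already enough to force Conrad's twisted condition on \emph{all} finite-type $\ast$-ideals, via the essentially trivial monotonicity $Span(I)\subseteq Span(xD)$ whenever $x\in I$. Once that bridge is built, Theorem \ref{Theorem ZC5} packages everything. The forward direction is standard patching, the only subtlety being the identification $A=A^{\ast_{w}}$ for a $\ast$-ideal $A$, which holds automatically from $A\subseteq\bigcap_{M}AD_{M}\subseteq A^{\ast}=A$.
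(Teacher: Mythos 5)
Your proof is correct and follows the same route as the paper: Theorem \ref{Theorem ZC8} plus the monotonicity $Span(I)\subseteq Span(xD)$ for $0\neq x\in I$ yields Conrad's twisted condition $(F^{t})$, Theorem \ref{Theorem ZC5} then gives finite $\ast$-character, and the converse is the standard local patching argument (which the paper merely calls ``easy to construct''). You have in fact supplied the two details the paper leaves implicit, namely the bridge from principal ideals to all finite-type $\ast$-ideals and the identification $A=A^{\ast_{w}}=F^{\ast}$ in the forward direction.
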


\begin{proof}
By Theorem \ref{Theorem ZC8}, every $\ast $-locally finitely generated ideal 
$I$ of $D$ being of finite type forces $D$ to satisfy Conrad's ($F^{t}$) and
by Theorem \ref{Theorem ZC5} $D$ is of finite $\ast $-character. The
converse is easy to construct.
\end{proof}

\begin{corollary}
\label{Corollary ZC10} (cf. \cite{CH 2019}) Let $D$ be a $\ast $-AP domain.
If every $\ast $-locally finitely generated $\ast _{w}$-ideal of $D$ is a $%
\ast $-ideal of finite type then $D$ is of finite $\ast $-character.
\end{corollary}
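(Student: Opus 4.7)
The plan is to derive Corollary \ref{Corollary ZC10} from Corollary \ref{Corollary ZC9} by first promoting the $\ast$-AP hypothesis to the stronger-looking $\ast$-SAP hypothesis, and then reconciling the condition phrased in terms of $\ast_w$-ideals with the condition phrased in terms of $\ast$-ideals. First, I would invoke Lemma \ref{Lemma ZC}, whose clauses (12) (for $\ast = d$) and (14) (for $\ast = t$) state that every $\ast$-AP domain is automatically a $\ast$-SAP domain. Thus the hypotheses of Corollary \ref{Corollary ZC9} are potentially in force.

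Second, I would translate the finite-type assumption. Since we are working only with $\ast \in \{d,t\}$, every $\ast$-ideal is in particular a $\ast_w$-ideal (for $\ast = d$ this is trivial because $\ast_w = d$, and for $\ast = t$ this is the standard observation $A_w \subseteq A_t$, so any $t$-ideal $A = A_t$ satisfies $A_w = A$). Consequently, any $\ast$-locally finitely generated $\ast$-ideal is a $\ast$-locally finitely generated $\ast_w$-ideal, and so by the hypothesis of the corollary it is of finite type. This is exactly the input that Corollary \ref{Corollary ZC9} demands.

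Third, applying Corollary \ref{Corollary ZC9} to the $\ast$-SAP domain $D$ yields that $D$ has finite $\ast$-character, finishing the proof.

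The step that requires care is the second one: recognising that the $\ast_w$-ideal hypothesis is at least as strong as the $\ast$-ideal hypothesis, so that nothing is lost when we feed it into Corollary \ref{Corollary ZC9}. If one preferred to avoid appealing to Corollary \ref{Corollary ZC9}, a direct route would be to rerun the argument of Theorem \ref{Theorem ZC8} inside the $\ast$-SAP domain $D$ — the ideal $B$ constructed there is by design a $\ast_w$-ideal and is $\ast$-locally finitely generated but not of finite type, contradicting the hypothesis — to conclude that every principal ideal is power-divisible by only finitely many mutually $\ast$-comaximal members of $\Gamma$. This is Conrad's twisted condition $(F^t)$, and then Theorem \ref{Theorem ZC5} delivers finite $\ast$-character.
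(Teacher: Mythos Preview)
Your proposal is correct and follows the approach the paper leaves implicit: the corollary carries no proof in the text, so the intended argument is precisely to observe (via Lemma~\ref{Lemma ZC}\,(12),(14)) that a $\ast$-AP domain is $\ast$-SAP and then invoke Corollary~\ref{Corollary ZC9}. Your care in checking that the $\ast_w$-ideal hypothesis is at least as strong as the $\ast$-ideal hypothesis (since every $\ast$-ideal is a $\ast_w$-ideal for $\ast\in\{d,t\}$) fills in the one translation step the paper does not spell out.
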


Finally let's note that a domain satisfying $f_{\ast }\mathcal{(}D\mathcal{)}%
\vartriangleleft ^{t}P$ (or an infra $\ast $-AB$\Gamma )$ domain may not
necessarily be a $\ast $-SAB or a $\ast $-SAP domain. For this note that
every pre-Schreier domain $D$ has the property that if $%
(x_{1},...,x_{r})_{v}\neq D$ then there is a non-unit $d\in D$ such that $%
(x_{1},...,x_{r})_{v}\subseteq dD$ \cite[Lemma 2.1]{Zaf 1990}. But there is
a pre-Schreier domain $D$ with a maximal $t$-ideal $M$ with $MD_{M}$ not a $%
t $-ideal (see e.g. Example 2.7 of \cite{Zaf 2020}). On the other hand a $%
\ast $-SAP and hence a $\ast $-domain is well behaved, according to Lemma %
\ref{Lemma ZC7}.

\begin{acknowledgement}
Input towards improved presentation from Dan Anderson and G.M. Bergman is
gratefully acknowledged. (Of course, flubs and mistakes are all mine.)
\end{acknowledgement}

Conflict of interest: There is no conflict of interest.

Corresponding Author: Being the only author of this work, I (Muhammad
Zafrullah) am the corresponding author.

\bigskip

\end{document}